\newcommand{\textcyr}[1]{%
 {\fontencoding{OT2}\fontfamily{wncyr}\fontseries{m}\fontshape{n}
 \selectfont #1}}
\newcommand{\Sha}{{\!\be\lbe\mbox{\textcyr{Sh}}}}
\newcommand{\Bha}{{\!\be\lbe\mbox{\textcyr{B}}}}
\numberwithin{equation}{section}
\def\lra{\longrightarrow}
\def\kb{\overline{K}}
\def\kc{\kb\!\be\phantom{.}^{*}}
\def\pdiv{p\!-\!{\rm{div}}}
\def\ldiv{\ell\!-\!{\rm{div}}}
\def\bh{{\mathbb H}}
\def\bd{{\mathbb D}}
\def\ov{{\mathcal O}_{v}}
\def\ove{{\mathcal O}_{v,{\rm{\acute{e}t}}}}
\def\br{\text{Br}}
\def\bz{{\mathbb Z}\,}
\def\bq{{\mathbb Q}}
\def\bg{{\mathbb G}}
\def\spec{{\rm{Spec}}\,}
\def\img{{\rm{Im}}\,}
\def\der{\e\rm{der}}
\def\be{\kern -.1em}
\def\lbe{\kern -.025em}
\def\iv{{\rm{Div}}}
\def\pic{{\rm{Pic}}}
\def\hom{{\rm{Hom}\e}}
\def\krn{{\rm{Ker}}\e }
\def\img{{\rm{Im}}\e }
\def\cok{{\rm{Coker}}}
\def\tor{\e\rm{tor}}
\def\Gtil{{\widetilde{G}}}
\def\s{\mathscr }
\def\ra{\rightarrow}
\def\e{\kern 0.08em}
\def\le{\kern 0.03em}
\def\ng{\kern -0.04em}
\def\g{\varGamma}
\def\krn{{\rm{Ker}}\,}
\def\cok{{\rm{Coker}}\,}
\def\br{{\rm{Br}}\e}
\def\bra{{\rm{Br}}_{\rm{a}}\e}
\def\branr{{\rm{Br}}_{\rm{a,nr}}\e}
\def\branrs{{\rm{Br}}_{\rm{a,nr}}^{\e S}\e}
\def\xb{\overline{X}}
\def\gb{\overline{G}}
\newtheorem{lemma}{Lemma}[section]
\newtheorem{theorem}[lemma]{Theorem}
\newtheorem{corollary}[lemma]{Corollary}
\newtheorem{proposition}[lemma]{Proposition}
\theoremstyle{definition}
\newtheorem{definition}[lemma]{Definition}
\theoremstyle{remark}
\newtheorem{remark}[lemma]{Remark}
\newtheorem{remarks}[lemma]{Remarks}
\newtheorem{examples}[lemma]{Examples}
\begin{document}

\title[Abelian class groups of reductive group schemes]{Abelian class groups of
reductive group schemes}


 \subjclass[2000]{Primary 20G35; Secondary 20G30, 11E72}

\author{Cristian D. Gonz\'alez-Avil\'es}
\address{Departamento de Matem\'aticas, Universidad de La Serena, Chile}
\email{cgonzalez@userena.cl}

\keywords{Reductive group schemes, abelian cohomology, flasque
resolutions, class sets, class groups}

\thanks{The author is partially supported by Fondecyt grant
1080025}

\maketitle

\begin{abstract} We introduce the abelian class group $C_{\rm{ab}}(G)$
of a reductive group scheme $G$ over a ring $A$ of arithmetical
interest and study some of its basic properties. For example, we
show that if the fraction field of $A$ is a global field without
real primes, then there exists a surjection $C(G)\twoheadrightarrow
C_{\rm{ab}}(G)$, where $C(G)$ is the class set of $G$.
\end{abstract}

\section{Introduction}

Let $K$ a global field, i.e., $K$ is either a number field or a
function field in one variable over a finite field $k$. Let $S$ be a
nonempty open subscheme of the spectrum of the ring of integers of
$K$ (in the number field case) or a nonempty open {\it affine}
subscheme of the unique smooth, projective and irreducible curve
over $k$ whose function field is $K$ (in the function field case).
For any nonempty open subscheme $U$ of $S$, let $U_{0}$ denote the
set of closed points of $U$. For each $v\in S_{0}$, let $\ov$ denote
the completion of the local ring of $S$ at $v$ and let $K_{v}$
denote the fraction field of $\ov$. Let
$$
A_{S}(U)=\prod_{v\in S_{0}\setminus U_{0}} K_{v}\times \prod_{v\in
U_{0}} \ov
$$
be the ring of $U$-integral adeles of $S$. The rings $A_{S}(U)$ form
an inductive system when the sets $U$ are ordered by reverse
inclusion (i.e., $U\leq U^{\e\prime}$ if, and only if, $U\supset
U^{\e\prime}$) and the ring of adeles of $S$ is, by definition,
$$
A_{S}=\varinjlim_{U} A_{S}(U).
$$
Now let $G$ be an affine group scheme of finite type over $S$ with
smooth generic fiber. Then the class set of $G$,
$$
C(G)=G\be\left(\lbe{\mathbb A}_{\e S}(S)\lbe\right)\backslash\e
G\be\left(\lbe{\mathbb A}_{\e S}\lbe\right)\be/\e G(\lbe K\lbe),
$$
encodes important arithmetic information about $G$. Unfortunately,
the above set is notoriously difficult to compute and, in general,
carries no additional structure, which makes the tools of Algebra
rather useless in its study. It is perhaps for this reason that, at
least in specific cases, researchers have sought to study certain
abelian groups which in some sense ``approximate" $C(G)$. A
well-known instance of this approach can be found in the theory of
quadratic forms, where the genus class set of a quadratic form $q$
(or, equivalently, the class set of a natural integral model of its
orthogonal group) is studied via an ``abelian group approximation",
namely the spinor genus class group of $q$. In this paper we present
the beginnings of a general development of this idea. More
precisely, we introduce the {\it abelian class group} of a reductive
$S$-group scheme $G$ and study some of its main properties (see
below for statements). Thus the theory developed here applies to
(connected) reductive algebraic groups over $K$ which extend to a
reductive group scheme over $S$, i.e., which have {\it good
reduction over $S$}. We work under this restriction because the
general case, where $G$ is allowed to have ``bad" (non-reductive)
fibers, requires a long series of preparations\footnote{ Including
the development of a homotopy theory for smooth (abelian) sheaves
over a discrete valuation ring and a theory of N\'eron models of
(2-term) complexes of tori.} which will be the subject of separate
papers. In this sense, therefore, the present paper is less general
than the work of C.Demarche \cite{dem2}, who considered arbitrary
flat integral models of finite type of a (connected) reductive
algebraic group $G_{\be K}$ over a number field $K$. See
\cite{dem2}, \S 4. However, in a different sense to be explained
below, the present paper is more general than [op.cit.]. Indeed, the
arguments of [op.cit.] are valid only under the hypothesis (H) that
the simply-connected central cover $\Gtil_{\lbe K}$ of $G_{\be K}$
satisfies the strong approximation property with respect to the set
$\Sigma$ of primes of $K$ which do not correspond to a point of
$S_{0}$. Under hypothesis (H), the class sets of the models
considered in \cite{dem2} are naturally equipped with the structure
of (finite) abelian groups, and Demarche is able to use the
Brauer-Manin pairing to obtain a duality theorem for these groups
\cite{dem2}, Theorem 4.1. We stress here the evident fact that,
since the Brauer group of a scheme is abelian, the Brauer-Manin
pairing can only ``detect" {\it abelian} groups associated to $G$.
In this regard, we also note that (H) implies as well that the
defect of strong approximation for $G_{\be K}$ relative to $\Sigma$
is naturally equipped with the structure of an abelian group, and
\cite{dem2}, Theorem 3.14, can be interpreted as a duality statement
for this group. In this paper we dispense with the hypothesis (H) in
the study of class sets of reductive group schemes over $S$ and, in
particular, abandon the compact-noncompact-type dichotomy which is
familiar from the discussion of class sets contained in \cite{pr},
\S\S8.2 and 8.3. We are thus able to handle both cases
simultaneously (again, under a ``good reduction" hypothesis on
$G_{\be K}$). Note, further, that this paper also covers the
function field case.

\smallskip

We now state the main results of the paper.

\smallskip

Let $G$ be a reductive group scheme over $S$, let $G^{\der}$ be the
derived group of $G$ and set $G^{\tor}=G/G^{\der}$. Further, let
$\Gtil$ be the simply-connected central cover of $G^{\der}$ and let
$\mu$ for the fundamental group of $G^{\der}$, i.e., the kernel of
$\Gtil\ra G^{\der}$. Now let $S_{\rm{fl}}$ be the small fppf site
over $S$ and let $H^{i}_{\rm{ab}}(S_{\rm{fl}},G)$ be the abelian
cohomology groups of $G$ introduced in \cite{ga3}. Restriction to
the generic fiber of $G$ yields a map
$H^{1}_{\rm{ab}}(S_{\rm{fl}},G)\ra H^{1}_{\rm{ab}}(K_{\rm{fl}},G)$,
and the {\it abelian class group of $\e G$} is by definition
$$
C_{\rm{ab}}(G)=\krn\!\be\left[\e H^{1}_{\rm{ab}}(S_{\rm{fl}},G)\ra
H^{1}_{\rm{ab}}(K_{\rm{fl}},G)\e\right].
$$
When $K$ has no real primes and $\Gtil_{\be K}$ satisfies hypothesis
(H), so that $C(G)$ has a natural abelian group structure as
mentioned above, then there exists a canonical isomorphism
$C(G)\simeq C_{\rm{ab}}(G\e)$ (see Remark 3.12(b))\footnote{When $K$
is a number field with real primes, the relation between $C(G)$ and
$C_{\rm{ab}}(G\e)$ is more complicated. See Remark 3.12(c).}. Then
results from Section 3, which essentially follow from the main
theorem of \cite{ga3}, yield the following statement.

\begin{theorem} {\rm{(=Theorem 3.11)}} Assume that $K$ has no real primes.
Then there exist a natural right action of $H^{\e
0}_{\rm{ab}}(S_{\rm{fl}},G \e)$ on $H^{\e
1}\be\big(S_{\e\rm{\acute{e}t}},\Gtil\e\big)$ and a canonical exact
sequence of pointed sets
$$
1\ra H^{\e 1}\be\big(S_{\rm{\acute{e}t}},\Gtil\e\big)/H^{\e
0}_{\rm{ab}}(S_{\rm{fl}},G \e)\ra C(G\e)\ra C_{\rm{ab}}(G\e)\ra 1,
$$
where the first nontrivial map is injective.
\end{theorem}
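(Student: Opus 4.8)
The plan is to derive the sequence by comparing, over $S$ and over $K$, the abelianization exact sequences supplied by the main theorem of \cite{ga3}, and then to feed in two facts special to the arithmetic of $K$. First I would record the pieces of the abelianization formalism. Over $S$ the main theorem of \cite{ga3} provides a natural exact sequence of pointed sets
$$
H^{\e 0}_{\rm{ab}}(S_{\rm{fl}},G)\ \to\ H^{\e 1}\big(S_{\rm{\acute{e}t}},\Gtil\big)\ \xrightarrow{t_{S}}\ H^{\e 1}(S_{\rm{fl}},G)\ \xrightarrow{\ \mathrm{ab}^{1}_{S}\ }\ H^{\e 1}_{\rm{ab}}(S_{\rm{fl}},G),
$$
in which $H^{\e 0}_{\rm{ab}}(S_{\rm{fl}},G)$ acts on $H^{\e 1}\big(S_{\rm{\acute{e}t}},\Gtil\big)$, the fibres of $t_{S}$ are precisely the orbits of that action, the image of $t_{S}$ is the fibre of $\mathrm{ab}^{1}_{S}$ over the neutral class, and $\mathrm{ab}^{1}_{S}$ is surjective. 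The analogous Borovoi sequence over $K$, together with the restriction maps to the generic fibre, assembles these into a commutative ladder, and the whole argument is a diagram chase in that ladder once the kernels defining $C(G)$ and $C_{\rm{ab}}(G)$ have been located inside the two rows.

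Next I would install the field-level simplifications, which is exactly where the hypothesis that $K$ has no real primes is used. Since $\Gtil_{\be K}$ is semisimple simply connected and $K$ has no real primes, the Hasse principle of Kneser--Harder--Chernousov together with Kneser's vanishing at the non-archimedean places gives $H^{\e 1}\big(K_{\rm{fl}},\Gtil\big)=1$; applying the same vanishing to every inner twist of $G_{\be K}$ shows that all fibres of $\mathrm{ab}^{1}_{K}$ are trivial, so $\mathrm{ab}^{1}_{K}\colon H^{\e 1}(K_{\rm{fl}},G)\to H^{\e 1}_{\rm{ab}}(K_{\rm{fl}},G)$ is a bijection. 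On the $S$-side I would invoke the earlier results of Section~3: because each $\ov$ has finite residue field, Lang's theorem yields $H^{\e 1}(\ov,G)=1$ for all $v$, whence $C(G)\simeq\krn\big[H^{\e 1}(S_{\rm{fl}},G)\to H^{\e 1}(K_{\rm{fl}},G)\big]$, and likewise $H^{\e 1}\big(S_{\rm{\acute{e}t}},\Gtil\big)\simeq C(\Gtil\e)$, the kernel being the whole group since $H^{\e 1}\big(K_{\rm{fl}},\Gtil\big)=1$. Restricting $\mathrm{ab}^{1}_{S}$ to $C(G)$ lands in $C_{\rm{ab}}(G)$ by commutativity, and $t_{S}$ carries $H^{\e 1}\big(S_{\rm{\acute{e}t}},\Gtil\big)$ into $C(G)$ because its restriction to $K$ factors through $H^{\e 1}\big(K_{\rm{fl}},\Gtil\big)=1$; these two maps are the ones in the asserted sequence.

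Finally I would verify exactness. Exactness at $C(G)$ and injectivity of the first map are inherited from the $S$-row: the fibre of $\mathrm{ab}^{1}_{S}|_{C(G)}$ over the neutral class is $C(G)\cap\img t_{S}=\img t_{S}$, and the fact that the fibres of $t_{S}$ are the orbits of $H^{\e 0}_{\rm{ab}}(S_{\rm{fl}},G)$ gives the injection of $H^{\e 1}\big(S_{\rm{\acute{e}t}},\Gtil\big)/H^{\e 0}_{\rm{ab}}(S_{\rm{fl}},G)$ into $C(G)$. Surjectivity onto $C_{\rm{ab}}(G)$ is the real point: given $\xi\in C_{\rm{ab}}(G)$, surjectivity of $\mathrm{ab}^{1}_{S}$ produces $z\in H^{\e 1}(S_{\rm{fl}},G)$ with $\mathrm{ab}^{1}_{S}(z)=\xi$; then $\mathrm{ab}^{1}_{K}$ sends the restriction of $z$ to the restriction of $\xi$, which is neutral since $\xi\in C_{\rm{ab}}(G)$, so the injectivity of $\mathrm{ab}^{1}_{K}$ forces $z$ to restrict trivially, i.e. $z\in C(G)$. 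I expect the main obstacle to be the surjectivity of $\mathrm{ab}^{1}_{S}$ over the scheme $S$: over a field this is classical, but over $S$ it is precisely the nontrivial content imported from \cite{ga3}. A secondary delicate point is the pointed-set bookkeeping that makes the action of $H^{\e 0}_{\rm{ab}}(S_{\rm{fl}},G)$ turn the fibres of $t_{S}$ into genuine orbits, so that the first map is not merely well defined but injective.
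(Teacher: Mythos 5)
Your proposal is correct and takes essentially the same route as the paper's proof: both rest on the exact sequence of Theorem 2.2(ii) over $S$, the identification of $C(G\e)$ and $C_{\rm{ab}}(G\e)$ as kernels of restriction to the generic fibre (via Lang's theorem and Nisnevich's exact sequence), and the bijectivity of ${\rm{ab}}^{1}_{G/K}$ when $K$ has no real primes, followed by the same diagram chase in the resulting commutative ladder. The only differences are cosmetic: you re-derive the bijectivity of ${\rm{ab}}^{1}_{G/K}$ from the Hasse principle, local vanishing and twisting, where the paper simply cites \cite{ga3}, Theorem 5.8(i) and Remark 5.9(a), and you spell out the chase that the paper declares immediate.
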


In Section 3 we also study some basic properties of
$C_{\rm{ab}}(G\e)$. In particular, using the flasque resolutions of
$G$ constructed in \cite{ga4}, we obtain the following results.

\begin{theorem} {\rm{(=Theorem 3.13)}} Let $G$ be a reductive group scheme over $S$
and let $1\ra F\ra H \ra G \ra 1$ be a flasque resolution of $G$. Set
$R=H^{\lbe\tor}$. Then the given resolution induces an exact
sequence of finitely generated abelian groups
$$
\mu(S)\hookrightarrow F(S)\ra R(S)\ra
H^{0}_{\rm{ab}}(S_{\rm{fl}},G\le)\ra H^{\le
1}(S_{\e\rm{\acute{e}t}},F\e)\ra C(R)\ra C_{\rm{ab}}(G)\ra 1.
$$
\end{theorem}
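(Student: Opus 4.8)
The plan is to compute both abelian cohomology groups through the given resolution and then compare the resulting hypercohomology sequence over $S$ with its analogue over $K$. First I would recall from \cite{ga3} that a flasque resolution computes abelian cohomology: writing $q\colon F\into H\onto R$ for the composite of the inclusion with the projection onto $R=H^{\tor}$, one has $H^{\e i}_{\rm{ab}}(S_{\rm{fl}},G)=\mathbb H^{\e i}(S_{\rm{fl}},[F\to R])$, with $F$ placed in degree $-1$ and $R$ in degree $0$. The two cohomology sheaves of this complex are $\krn q$ and $\cok q$. Since $F$ is central in $H$ and $H^{\der}=\Gtil$, one has $\krn q=F\cap\Gtil=\krn[\Gtil\to G^{\der}]=\mu$ and $\cok q=G^{\tor}$; in particular $\mathbb H^{-1}=\mu(S)$. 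As $F$ and $R$ are smooth, their fppf and étale cohomologies coincide, which is why the étale groups appear in the statement.

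Next I would simply write down the hypercohomology long exact sequence of $[F\to R]$ over $S$. Using $\mathbb H^{-1}=\mu(S)$, $\mathbb H^{0}=H^{0}_{\rm{ab}}(S_{\rm{fl}},G)$ and $\mathbb H^{1}=H^{1}_{\rm{ab}}(S_{\rm{fl}},G)$, it reads
\[
\mu(S)\into F(S)\to R(S)\to H^{0}_{\rm{ab}}(S_{\rm{fl}},G)\to H^{1}(S_{\rm{\acute{e}t}},F)\labelto{\alpha}H^{1}(S_{\rm{\acute{e}t}},R)\labelto{\beta}H^{1}_{\rm{ab}}(S_{\rm{fl}},G)\labelto{\partial}H^{2}(S_{\rm{\acute{e}t}},F).
\]
This already produces every term and map of the asserted sequence, except that $H^{1}(S_{\rm{\acute{e}t}},R)$ must be replaced by $C(R)$ and $H^{1}_{\rm{ab}}(S_{\rm{fl}},G)$ by $C_{\rm{ab}}(G)$, with surjectivity at the right-hand end. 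The point is to invoke the two defining features of a flasque resolution. Since $R=H^{\tor}$ is quasi-trivial, $H^{1}(K,R)=0$, so (by the torus case of the class-group computation) $C(R)=\krn[H^{1}(S_{\rm{\acute{e}t}},R)\to H^{1}(K,R)]=H^{1}(S_{\rm{\acute{e}t}},R)$; and since $F$ is flasque, $H^{1}(K_{v},F)=0$ at every closed point $v$.

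It remains to turn $\beta$ into a surjection onto $C_{\rm{ab}}(G)$. Because $H^{1}(K,R)=0$, the composite of $\beta$ with restriction to $K$ vanishes, so $\beta$ carries $C(R)=H^{1}(S_{\rm{\acute{e}t}},R)$ into $C_{\rm{ab}}(G)=\krn[H^{1}_{\rm{ab}}(S_{\rm{fl}},G)\to H^{1}_{\rm{ab}}(K_{\rm{fl}},G)]$; exactness of the asserted sequence at $C(R)$ is then nothing but exactness of the displayed sequence at $H^{1}(S_{\rm{\acute{e}t}},R)$, and exactness at $H^{1}(S_{\rm{\acute{e}t}},F)$ is inherited directly. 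The one substantial point — and the step I expect to be the main obstacle — is the surjectivity $C(R)\onto C_{\rm{ab}}(G)$. Given $z\in C_{\rm{ab}}(G)$, its boundary $\partial(z)\in H^{2}(S_{\rm{\acute{e}t}},F)$ maps to $0$ in $H^{2}(K,F)$, because $z$ already maps to $0$ in $H^{1}_{\rm{ab}}(K_{\rm{fl}},G)$; thus $\partial(z)\in\krn[H^{2}(S_{\rm{\acute{e}t}},F)\to H^{2}(K,F)]$. I would prove that this kernel vanishes by a Mayer--Vietoris comparison of the $S$-, $K$- and local cohomologies of $F$: flasqueness gives $H^{1}(K_{v},F)=0$, while good reduction together with $\mathrm{cd}\,\kappa(v)=1$ gives $H^{2}(\ov,F)=H^{2}(\kappa(v),F)=0$, which forces $H^{2}(S_{\rm{\acute{e}t}},F)\into H^{2}(K,F)$. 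Hence $\partial(z)=0$, so $z\in\img\beta$ and lifts to $C(R)$. Splicing this with the left-hand portion of the hypercohomology sequence yields the stated exact sequence of finitely generated abelian groups.
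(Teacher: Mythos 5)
Your proof is correct, and its skeleton coincides with the paper's: \cite{ga4}, Proposition 4.2, to compute abelian cohomology by the complex $(F\to R)$ and to obtain the long exact hypercohomology sequence (which settles exactness up to $H^{1}(S_{\rm{\acute{e}t}},F)$); quasi-triviality of $R$ plus Theorem 3.4 and Hilbert 90 to identify $C(R)=H^{1}(S_{\rm{\acute{e}t}},R)$; and flasqueness of $F$, in the form of the injectivity of $H^{2}(S_{\rm{\acute{e}t}},F)\to H^{2}(K,F)$, to force surjectivity at the right-hand end. Where you genuinely diverge is in the bookkeeping of the second half. The paper, working from Definition 3.8, compares the sequence over $S$ with the product of local sequences over $v\in\Sigma$, extracts the intermediate exact sequence $H^{1}(S_{\rm{\acute{e}t}},F)\to C(R)\to D^{1}_{\rm{ab}}(S,G)\to D^{2}(S,F)\to 1$ (this step uses the injectivity of $H^{2}(S_{\rm{\acute{e}t}},R)\to\prod_{v\in\Sigma}H^{2}(K_{v},R)$ from \cite{adt}, Proposition II.2.1), and then cuts out $C_{\rm{ab}}(G)$ inside $D^{1}_{\rm{ab}}(S,G)$ using the isomorphism $\Sha^{1}_{\rm{ab}}(K,G)\simeq\Sha^{2}(K,F)$ of Proposition 3.1. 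You instead restrict directly to the generic point and work with $C_{\rm{ab}}(G)=\ker\!\left[H^{1}_{\rm{ab}}(S_{\rm{fl}},G)\to H^{1}_{\rm{ab}}(K_{\rm{fl}},G)\right]$. This is legitimate, since the restriction map factors through $\Sha^{1}_{{\rm{ab}},S}(K,G)\subset H^{1}_{\rm{ab}}(K_{\rm{fl}},G)$ (as established just before Definition 3.8), so the two kernels agree; you should state this identification explicitly. What it buys you is a shorter argument that dispenses with Proposition 3.1 and with \cite{adt}, Proposition II.2.1, altogether.

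Two further points. For the key injectivity $H^{2}(S_{\rm{\acute{e}t}},F)\hookrightarrow H^{2}(K,F)$ the paper simply cites \cite{cts2}, Theorem 2.2(ii); your localization-sequence d\'evissage re-proves it in this setting and is sound, except for one attribution: $H^{1}(K_{v},F)=0$ is \emph{not} a consequence of flasqueness alone. Over a local field, a flasque torus whose splitting field is ramified (hence with possibly non-cyclic splitting group) can have nonvanishing $H^{1}$, because flasque does not imply coflasque for non-cyclic groups. What you need, and what holds here, is flasqueness combined with good reduction: since $F$ extends to an $\ov$-torus, its splitting field at $v\in S_{0}$ is unramified, so the local splitting group is cyclic; then, as the cohomology of the unramified integral points vanishes (Lang's theorem), $H^{1}(K_{v},F)$ is identified with $H^{1}$ of this cyclic group acting on the cocharacter lattice of $F$, and that group vanishes by the very definition of flasque. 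With this correction, $H^{2}_{v}(\ov,F)=0$ and the limit over shrinking open subsets give the injectivity, and your proof is complete.
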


\smallskip

\begin{theorem} {\rm{(=Corollary 3.14)}} Let $L/K$ be a finite Galois extension and
let $S^{\e\prime}\ra S$ be the normalization of $S$ in $L$. Let $G$
be a reductive group scheme over $S$ and let $1\ra F\ra H \ra G \ra
1$ be a flasque resolution of $G$. Then the given resolution defines
a {\rm corestriction homomorphism}
$$
{\rm{cores}}_{ S^{\prime}\be/S}\colon
C_{\rm{ab}}(S^{\e\prime},G\e)\ra C_{\rm{ab}}(S,G\e),
$$
where $C_{\rm{ab}}(S^{\e\prime},G\e)$ (respectively,
$C_{\rm{ab}}(S,G\e)$) is the abelian class group of
$G\times_{S}S^{\e\prime}$ (respectively, $G$).
\end{theorem}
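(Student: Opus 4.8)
The plan is to construct the corestriction directly on the abelian cohomology groups in a way that is compatible with restriction to the generic fibre, and then to cut down to the class subgroups $C_{\rm{ab}}$; the flasque resolution is precisely the device that makes this corestriction explicit and well defined. First I would base-change the given resolution along $S'\to S$. Since flasque resolutions are stable under base change (by the construction of \cite{ga4}), the sequence $1\to F'\to H'\to G'\to 1$ with $F'=F\times_{S}S'$, $H'=H\times_{S}S'$ and $G'=G\times_{S}S'$ is a flasque resolution of $G'=G\times_{S}S'$ over $S'$, with $R':=(H')^{\tor}=R\times_{S}S'$ again quasitrivial. By the same identification that underlies Theorem 3.13, the abelian cohomology over each of the four bases $S,\,S',\,\spec K,\,\spec L$ is computed by the two-term complex of tori $[F\to R]$ (respectively its pullback $[F'\to R']$); concretely $H^{i}_{\rm{ab}}(S_{\rm{fl}},G)\cong\mathbb{H}^{i}(S_{\rm{fl}},[F\to R])$ and likewise over $S'$, $K$ and $L$.

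Next I would produce the corestriction on these hypercohomology groups. The normalization $f\colon S'\to S$ of the Dedekind scheme $S$ in the finite separable extension $L/K$ is finite and, being torsion-free over a Dedekind base, locally free. For a smooth commutative $S$-group scheme $M$ one has $Rf_{*}f^{*}M=f_{*}f^{*}M=\text{Res}_{S'/S}(f^{*}M)$ (finite morphisms being acyclic), and the norm of a finite locally free morphism furnishes a trace $\text{tr}_{M}\colon f_{*}f^{*}M\to M$, namely the usual norm $N_{S'/S}\colon\text{Res}_{S'/S}(M_{S'})\to M$ when $M$ is a torus. Applying this to $M=F$ and $M=R$ yields a morphism of complexes $f_{*}f^{*}[F\to R]\to[F\to R]$, since $\text{tr}_{M}$ is natural in $M$ and hence commutes with the differential $F\to R$; together with $\mathbb{H}^{i}(S_{\rm{fl}},f_{*}(-))=\mathbb{H}^{i}(S'_{\rm{fl}},-)$ this gives a corestriction
$$
\text{cor}_{S'/S}\colon\mathbb{H}^{i}(S'_{\rm{fl}},[F'\to R'])\lra\mathbb{H}^{i}(S_{\rm{fl}},[F\to R]).
$$
The identical construction applied to the generic fibre $\spec L\to\spec K$ gives the field-theoretic $\text{cor}_{L/K}$.

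To descend to the class groups I would verify that corestriction commutes with restriction to the generic fibre, i.e. that the square
$$
\xymatrix@C=2em{
\mathbb{H}^{1}(S'_{\rm{fl}},[F'\to R'])\ar[r]\ar[d]_{\text{cor}_{S'/S}} & \mathbb{H}^{1}(L_{\rm{fl}},[F'\to R'])\ar[d]^{\text{cor}_{L/K}}\\
\mathbb{H}^{1}(S_{\rm{fl}},[F\to R])\ar[r] & \mathbb{H}^{1}(K_{\rm{fl}},[F\to R])
}
$$
commutes. As the corestriction is assembled degreewise from the traces $\text{tr}_{F}$ and $\text{tr}_{R}$, this reduces to the corresponding compatibility for the single tori $F$ and $R$, which in turn follows from the fact that Weil restriction commutes with the base change $\spec K\to S$: the generic fibre of $N_{S'/S}\colon\text{Res}_{S'/S}(M_{S'})\to M$ is exactly $N_{L/K}\colon\text{Res}_{L/K}(M_{K})\to M_{K}$. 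Granting the square, $\text{cor}_{S'/S}$ carries the kernel of the top arrow into the kernel of the bottom arrow; by definition these kernels are $C_{\rm{ab}}(S',G)$ and $C_{\rm{ab}}(S,G)$, and the resulting map is the desired $\text{cores}_{S'\be/S}\colon C_{\rm{ab}}(S',G)\to C_{\rm{ab}}(S,G)$.

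The main obstacle is this last compatibility, namely that the transfer attached to $f$ specialises, along the generic point, to the field corestriction for $L/K$. Equivalently, one must check that the trace is compatible with the localization $\mathcal{O}_{S}\leadsto K$ at the generic point, a standard but not entirely formal property of norms for finite locally free morphisms; it is here that one genuinely uses that $\spec L\to\spec K$ is the generic fibre of $f$. I would also note an alternative, more ``corollary-like'' route through Theorem 3.13 itself: that exact sequence exhibits $C_{\rm{ab}}(G)$ as the cokernel of $H^{1}(S_{\e\rm{\acute{e}t}},F)\to C(R)$, and the same traces induce compatible corestrictions $H^{1}(S'_{\e\rm{\acute{e}t}},F')\to H^{1}(S_{\e\rm{\acute{e}t}},F)$ and (on the quasitrivial torus $R$) $C(R')\to C(R)$; commutativity of the resulting square then yields $\text{cores}_{S'\be/S}$ on cokernels, in agreement with the construction above.
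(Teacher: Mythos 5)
Your proposal is correct, and its essential ingredient coincides with the paper's: the corestriction is manufactured from Weil restriction along the finite locally free morphism $S'\to S$ together with the trace morphism of \cite{sga4}, XVII, 6.3.13.2, applied to the two tori $F$ and $R=H^{\rm tor}$ of the flasque resolution (the paper cites \cite{cts2}, (0.4.1), p.154, for this construction, and \cite{cts2}, Proposition 1.4, for the fact that the base-changed sequence is again a flasque resolution). Where you differ is in the descent to the class groups. The paper's proof is exactly the ``alternative, more corollary-like route'' you sketch in your last sentences: it applies Theorem 3.13 over $S'$ and over $S$ to present $C_{\rm{ab}}$ as the cokernel of $H^{1}(S_{\rm \acute{e}t},F)\to C(S,R)$ (and likewise over $S'$), forms the commutative square of torus corestrictions, and takes the induced map on cokernels; generic fibres are never mentioned, so the one non-formal verification in your main route --- compatibility of the trace with localization at the generic point --- is simply not needed there. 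Your main route instead builds ${\rm cores}_{S'/S}$ on the full group $H^{1}_{\rm{ab}}(S_{\rm fl},G)\cong\mathbb{H}^{1}(S_{\rm fl},[F\to R])$ and checks that it commutes with restriction to the generic fibre before passing to kernels (which are the $C_{\rm{ab}}$'s by the definition in the Introduction); this costs the base-change compatibility of the trace, which is indeed standard and part of the SGA4 formalism, but it buys more, namely a corestriction on all of $H^{1}_{\rm{ab}}$ compatible with the generic fibre, hence induced maps on $D^{1}_{\rm{ab}}$ and $\Sha^{1}_{\rm{ab}}$ as well. One small correction: your appeal to ``finite morphisms being acyclic'' to write $Rf_{*}f^{*}M=f_{*}f^{*}M$ is a statement about the \'etale topology, not the fppf topology; since $F$ and $R$ are smooth tori, \'etale and flat (hyper)cohomology agree here, so your argument stands, but the pushforward identification should be carried out \'etale-locally, as the paper implicitly does by working throughout with $H^{1}(S_{\rm \acute{e}t},F)$ and $C(S,R)=H^{1}(S_{\rm \acute{e}t},R)$.
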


The homomorphism of the theorem is independent, up to isomorphism,
of the chosen flasque resolution of $G$ and is functorial in
$S^{\e\prime}\ra S$. See Remark 3.15. When $K$ has no real primes
and $\Gtil_{\be K}$ satisfies hypothesis (H), so that $C(G)\simeq
C_{\rm{ab}}(G\e)$ as noted above, the preceding theorem shows that
$C(G)$ is endowed with natural corestriction maps. When $K$ is an
arbitrary number field and $\Gtil_{\be K}$ satisfies hypothesis (H),
C.Demarche obtained in \cite{dem2}, Theorem 4.6, a similar
corestriction homomorphism which presumably coincides with the above
one when $K$ is totally imaginary. The existence of such
corestriction maps (under hypothesis (H) and for any $K$) was first
established in \cite{th}, Theorem 14, p.36.

In Sections 4 and 5 we use results of C.Demarche \cite{dem1} and
M.Borovoi and J.van Hamel \cite{bvh} to obtain the following result.
Set $G_{\be K}=G\times_{S}\spec K$.

\begin{theorem} {\rm{(=Theorem 5.5)}} Assume that $G_{\be K}=G\times_{S}\spec K$ admits
a smooth $K$-compactification\footnote{This is certainly the case if
$K$ is a number field, by Hironaka's theorem.}. There exists a
perfect pairing of finite groups
$$
C_{\rm{ab}}(G\e)\times \branrs(G_{\be K}\e)/\Bha(G_{\be
K}\e)\ra\bq/\bz,
$$
where $\branrs(G_{\be K}\e)$ and $\Bha(G_{\be K}\e)$ are the
subgroups of the algebraic Brauer group of $G_{\be K}$ given by
\eqref{branrs} and \eqref{bg}, respectively.
\end{theorem}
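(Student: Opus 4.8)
The plan is to construct the perfect pairing as a composite of two dualities: an arithmetic duality over $S$ for the abelian cohomology of $G$, and a duality relating the algebraic Brauer group of $G_{\be K}$ to the relevant abelian cohomology of the generic fiber. The target group $C_{\rm{ab}}(G\e)$ is, by its very definition, the kernel of the restriction map $H^{1}_{\rm{ab}}(S_{\rm{fl}},G)\ra H^{1}_{\rm{ab}}(K_{\rm{fl}},G)$, so I would first express it in terms of a two-term complex of tori. Using a flasque resolution $1\ra F\ra H\ra G\ra 1$ as in Theorem~3.13, the abelian cohomology is computed by the complex $\big[R\ra\text{(something)}\big]$ with $R=H^{\lbe\tor}$, and Borovoi--van Hamel's hypercohomology formalism \cite{bvh} identifies $H^{i}_{\rm{ab}}$ with hypercohomology of the crossed module $\Gtil\ra G$, or equivalently with the hypercohomology of a complex of tori $\big[F\ra R\big]$. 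This reduces the statement to an arithmetic duality theorem for hypercohomology of a two-term complex of tori over the arithmetic base $S$.

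The second ingredient is the identification of the Brauer-group side. The key input here is Demarche's description \cite{dem1} of the quotient $\branrs(G_{\be K}\e)/\Bha(G_{\be K}\e)$, where $\branrs$ and $\Bha$ are the subgroups defined by \eqref{branrs} and \eqref{bg}. I expect that this quotient is naturally isomorphic (Poincar\'e-dually) to a Galois hypercohomology group of the dual complex of the complex of tori computing $C_{\rm{ab}}(G\e)$. Concretely, the algebraic Brauer group of $G_{\be K}$ modulo the image of $\br K$ is governed by $H^{1}$ of the complex of character groups (or Picard/coweight data) of the maximal torus and the fundamental group $\mu$; passing to the unramified-outside-$S$ subquotient $\branrs/\Bha$ selects exactly the dual object to $C_{\rm{ab}}$. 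So the strategy is to write both sides as hypercohomology of mutually dual two-term complexes of $S$-tori (or their generic fibers), and then invoke a Poitou--Tate-style duality.

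The technical heart of the argument is therefore the duality pairing itself. I would assemble it from the nine-term Poitou--Tate exact sequence for a complex of tori over the global field $K$ (as developed by Demarche and by Harari--Szamuely type methods), restricting the local conditions at the places of $S$ to the integral (unramified) ones. The hypothesis that $K$ has no real primes (inherited from the earlier theorems, needed to relate $C(G)$ and $C_{\rm{ab}}(G)$) ensures there are no archimedean correction terms obstructing perfectness; the hypothesis that $G_{\be K}$ admits a smooth $K$-compactification is what guarantees that $\branrs(G_{\be K}\e)/\Bha(G_{\be K}\e)$ is finite and is computed by the Picard group of the compactification, making the Brauer side tractable and matching it cleanly with the arithmetic dual of $C_{\rm{ab}}(G\e)$.

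The main obstacle I anticipate is bookkeeping the local conditions so that the global duality restricts to a \emph{perfect} pairing of the precise integral subquotients named in the statement, rather than merely a nondegenerate pairing up to some correction involving local terms at $v\in S_{0}$. In the classical Poitou--Tate framework one pairs a Tate--Shafarevich-type group with its dual, and here $C_{\rm{ab}}(G\e)$ plays the role of a $\Sha$-group (it is the kernel of restriction to the generic fiber, hence measures classes that are ``locally trivial'' in the relevant sense), while $\branrs/\Bha$ plays the role of the dual $\Sha$. Verifying that the integral structures on both sides are exactly Pontryagin dual---so that no residual contribution from the primes outside $S$ or from the flasque resolution's $F$-term survives---will require carefully combining Theorem~3.13's exact sequence with the local dualities at each $v\in S_{0}$, and this matching is where the argument must be pushed through with care. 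Finiteness of both groups then upgrades nondegeneracy to perfectness.
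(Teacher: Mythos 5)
Your overall architecture --- a flasque resolution converting both sides into statements about the mutually dual complexes of tori $(F\ra R)$ and $(R^{*}\ra F^{*})$, then an arithmetic duality over $S$ combined with a $\Sha$-type duality --- is indeed the paper's strategy: its Theorem 4.12 dualizes the exact sequence $1\ra C_{\rm{ab}}(G\e)\ra D^{1}_{\rm{ab}}\lbe(S,G\e)\ra\Sha^{1}_{\rm{ab}}\lbe(K,G\e)\ra 1$ against the perfect pairings of Propositions 4.6 and 4.9. But your proposal has genuine gaps. The most serious is that the identification of the Brauer side is left as an ``expectation'': this is the actual content of Section 5, and it is precisely where the smooth compactification enters. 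One needs Borovoi--van Hamel's isomorphism $\bra G_{\be K}\simeq \bh^{\e 1}\be\big(K,{\rm{UPic}}\big(\e\gb_{\be K}\big)[1]\big)$ together with Colliot-Th\'el\`ene's computation, via the compactification, of ${\rm{UPic}}\big(\e\gb_{\be K}\big)[1]$ as $\big(R_{K}^{*}\ra F_{K}^{*}\big)$; this yields $\bra G_{\be K}\simeq H^{1}_{\rm{ab}}(K,G^{*})$, hence $\Bha(G_{\be K}\e)\simeq\Sha^{1}_{\rm{ab}}\lbe(K,G^{*})$ and, combined with the paper's Proposition 4.11, $\branrs(G_{\be K}\e)\simeq D^{1}_{\rm{ab}}\lbe(S,G^{*})$. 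Without these identifications the theorem does not reduce to any duality for complexes of tori. Note also that finiteness is not ``guaranteed by'' the compactification, as you claim: it comes from the duality theorems over $S$ (Propositions 4.6 and 4.9); the compactification only makes the Brauer group identifiable with dual abelian cohomology.

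Two further points. First, the hypothesis that $K$ has no real primes, which you say is ``inherited from the earlier theorems,'' is not in the statement and is not needed: the paper defines local abelian cohomology at real places by Tate-modified hypercohomology, so the Section 4 dualities (and hence Theorem 5.5) hold for every global field. The no-real-primes condition belongs to Theorem 3.11, which compares $C(G\e)$ with $C_{\rm{ab}}(G\e)$ --- a comparison that plays no role here, since $C_{\rm{ab}}(G\e)$ is defined directly in terms of abelian cohomology. Second, since the theorem covers function fields, the duality for complexes of tori over $S$ cannot simply be quoted from Demarche, whose results exclude the residue characteristic: the $p$-primary part in characteristic $p$ requires a separate flat-cohomology argument, which is exactly the paper's Proposition 4.1 together with Lemmas 4.2--4.4. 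Your plan, resting on ``Demarche and Harari--Szamuely type methods'' restricted to integral local conditions, would leave that part of the pairing unproven.
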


When $K$ is a totally imaginary number field and $\Gtil_{\be K}$
satisfies hypothesis (H), the pairing of the theorem should be
closely related to that obtained by Demarche in \cite{dem2}, Theorem
4.14. We hope to clarify this issue in a future publication.

\section*{Acknowledgements}
I thank Bas Edixhoven, Philippe Gille, Niko Naumann and Adrian Vasiu
for helpful comments.

\section{Preliminaries}

Let $K$ and $S$ be as in the Introduction. We will write
$S_{\e\rm{fl}}$ (respectively, $S_{\e\rm{\acute{e}t}}$,
$S_{\rm{Nis}}$) for the small fppf (respectively, \'etale,
Nisnevich) site over $S$. If $\tau=\rm{fl}$ or $\rm{\acute{e}t}$,
$G$ is an $S$-group scheme and $i=0$ or $1$, $H^{\le
i}(S_{\e\tau},G)$ will denote the $i$-th cohomology set of the sheaf
on $S_{\e\tau}$ represented by $G$. If $G$ is commutative, these
cohomology sets are in fact abelian groups and are defined for every
$i\geq 0$. If $S^{\e\prime}\ra S$ is a morphism of schemes, we will
write $H^{\le i}(S_{\e\rm{fl}}^{\e\prime},G)$ for $H^{\le
i}(S_{\e\rm{fl}}^{\e\prime},G\times_{S}S^{\e\prime}\e)$. When $G$ is
smooth, the canonical map $H^{\le i}(S_{\rm{\acute{e}t}},G)\ra H^{\e
i}(S_{\e\rm{fl}},G)$ is bijective (see \cite{mi1}, Remark
III.4.8(a), p.123). In this case, the preceding sets will be
identified. If $K$ is a field, we will write $\kb$ for a fixed {\it
separable} algebraic closure of $K$, $\g$ for $\text{Gal}(\kb/K)$
and $H^{\le i}(K,G)$ for the Galois cohomology set (or group) $H^{\e
i}\big(\g,G\big(\e\kb\e\big)\big)$. If $S=\spec K$, $H^{\le
i}(S_{\rm{\acute{e}t}},G)$ and $H^{\le i}(K,G)$ will be identified.

An $S$-group scheme $G$ is called {\it reductive} (respectively,
{\it semisimple}) if it is affine and smooth over $S$ and its
geometric fibers are  {\it connected} reductive (respectively,
semisimple) algebraic groups. If $G$ is a reductive $S$-group
scheme, $G^{\le *}=\hom_{\e S-\text{gr}}(G,\bg_{m,S})$ is the
twisted-constant $S$-group scheme of characters of $G$. The derived
group of $G$ (see \cite{sga3}, XXII, Theorem 6.2.1(iv)), will be
denoted by $G^{\der}$. It is a normal semisimple subgroup scheme of
$G$ and the quotient
$$
G^{\tor}:=G/G^{\der}
$$
is an $S$-torus (in \cite{sga3}, XXII, 6.2, $G^{\tor}$ is denoted by
$\rm{corad}(G\e)$ and called the {\it coradical} of $G$). Note that,
since $G^{\der\le*}=0$, the exact sequence of reductive $S$-group
schemes
\begin{equation}\label{seq1}
1\ra G^{\der}\ra G\ra G^{\tor}\ra 1
\end{equation}
induces the equality $G^{\le *}=G^{\tor\le*}$.

A semisimple $S$-group scheme $G$ is called {\it simply-connected}
if it admits no nontrivial central cover, i.e., any central
$S$-isogeny $G^{\e\prime}\ra G$ from a semisimple $S$-group scheme
$G^{\e\prime}$ to $G$ is an isomorphism. If $G$ is any semisimple
$S$-group scheme, then there exists a simply-connected $S$-group
scheme $\Gtil$ and a central isogeny $\varphi\colon\Gtil\ra G$. The
pair $(\Gtil,\varphi)$ is unique up to unique isomorphism and its
formation commutes with arbitrary extensions of the base. It is
called the {\it simply-connected central cover of $\,G$}. The {\it
fundamental group} of $G$ is by definition the kernel of $\varphi$
and will be denoted by $\mu$ (or by $\mu_{G}$, if necessary). It is
a finite $S$-group scheme of multiplicative type. Now, if $G$ is any
reductive $S$-group scheme, $\Gtil$ will denote the simply-connected
central cover of its derived group $G^{\der}$ (which is semisimple).
Further, the fundamental group of $G$ is by definition that of
$G^{\der}$ and will be denoted by $\mu$ (or by $\mu_{G}$). There
exists a canonical central extension
\begin{equation}\label{seq2}
1\ra\mu\ra\Gtil\ra G^{\der}\ra 1.
\end{equation}
We will write $\partial\colon\Gtil\ra G$ for the composition
$\Gtil\twoheadrightarrow G^{\der}\hookrightarrow G$. Clearly, there
exists a canonical exact sequence
\begin{equation}\label{seq3}
1\ra\mu\ra\Gtil\overset{\partial}\ra G\ra G^{\tor}\ra 1.
\end{equation}
Now there exists a canonical ``conjugation" action of $G$ on $\Gtil$
such that $(\Gtil\be\overset{\partial}\ra\be G\e)$, regarded as a
two-term complex with $\Gtil$ and $G$ placed in degrees $-1$ and
$0$, respectively, is a (left) {\it quasi-abelian} crossed module on
$S_{\e\rm{fl}}$, in the sense of \cite{ga3}, Definition 3.2. See
\cite{br}, Example 1.9, p.28, and \cite{ga3}, Example 2.2(iii). Thus
$\partial$ induces a homomorphism $\partial_{Z}\colon
Z\big(\Gtil\,\big)\ra Z(G)$ and the embedding of crossed modules
$$
\big(Z\big(\Gtil\e\big)\!\overset{\be\partial_{\lbe Z}}
\longrightarrow\! Z(G)\big)\hookrightarrow
(\Gtil\!\overset{\partial}\longrightarrow\! G)
$$
is a quasi-isomorphism (see \cite{ga3}, Proposition 3.4). In
particular, \eqref{seq3} induces an exact sequence of $S$-groups of
multiplicative type
\begin{equation}\label{seq4}
1\ra\mu\ra Z\big(\Gtil\e\big)\overset{\partial_{Z}}\longrightarrow
Z(G) \ra G^{\tor}\ra 1.
\end{equation}

Let $i\geq -1$ be an integer. The $i$-th \textit{abelian (flat)
cohomology group of $G$} is by definition the hypercohomology group
\begin{equation}\label{abcohom}
H^{\le i}_{\rm{ab}}(S_{\e\rm{fl}}, G\e)={\bh}^{\e
i}\big(S_{\e\rm{fl}},
Z\big(\Gtil\,\big)\!\overset{\partial_{Z}}\longrightarrow\! Z(G\e)),
\end{equation}
where $Z(G)$ is placed in degree 0. We will also need the
\textit{dual abelian cohomology groups of $\e G$}. By definition,
these are the groups
\begin{equation}\label{dabcohom}
H^{\le i}_{\rm{ab}}(S_{\e\rm{\acute{e}t}},G^{*})={\bh}^{\e
i}\big(S_{\e\rm{\acute{e}t}},Z(G\e)^{*}\!\overset{\partial_{\be
Z}^{\e *}}\longrightarrow Z\big(\Gtil\,\big)^{\lbe *}\e\big),
\end{equation}
where $Z(G\e)^{*}$ is placed in degree $-1$. Note that, as the
Cartier dual of an $S$-group of multiplicative type is \'etale over
$S$, the preceding groups coincide with the flat hypercohomology
groups ${\bh}^{\e i}\big(S_{\e\rm{fl}},Z(G\e)^{*}\ra
Z\big(\Gtil\,\big)^{\lbe *}\e\big)$. If $S=\spec K$, where $K$ is a
field, we will write $H^{\le i}_{\rm{ab}}(K,G^{*})$ for $H^{\le
i}_{\rm{ab}}(S_{\e\rm{\acute{e}t}},G^{*})$. Clearly,
$$
H^{\le i}_{\rm{ab}}(K,G^{*})={\bh}^{\e
i}\big(\g,Z\big(G\big(\e\kb\e\big)\big)^{\be *}\ra
Z\big(\Gtil\big(\e\kb\e\big)\big)^{\be *}\e\big)
$$
(Galois hypercohomology). Now, by \eqref{seq4}, there exist exact
sequences
\begin{equation}\label{kamb}
\dots\ra H^{\e i-1}(S_{\e\rm{\acute{e}t}},G^{\tor}\e)\ra H^{\le
i+1}(S_{\e\rm{fl}},\mu\e)\ra H^{\e i}_{\rm{ab}}(S_{\e\rm{fl}},G)\ra
H^{\e i}(S_{\e\rm{\acute{e}t}},G^{\tor}\e)\ra\dots.
\end{equation}
and
\begin{equation}\label{kamb*}
\dots\ra H^{\e i-1}(S_{\e\rm{\acute{e}t}},\mu^{\lbe *}\e)\ra H^{\le
i+1}(S_{\e\rm{\acute{e}t}},G^{\tor*}\e)\ra H^{\e
i}_{\rm{ab}}(S_{\e\rm{\acute{e}t}},G^{*})\ra H^{\e
i}(S_{\e\rm{\acute{e}t}},\mu^{\lbe *}\e)\ra\dots.
\end{equation}

\begin{examples}\indent

\begin{enumerate}

\item[(a)] If $G$ is semisimple, i.e., $G^{\tor}=0$, then
$H^{\e i}_{\rm{ab}}(S_{\e\rm{fl}},G)=H^{\le
i+1}(S_{\e\rm{fl}},\mu\e)$ and $H^{\e
i}_{\rm{ab}}(S_{\e\rm{\acute{e}t}},G^{*})=H^{\e
i}(S_{\e\rm{\acute{e}t}},\mu^{\lbe *})$.
\item[(b)] If $G$ has trivial fundamental group, i.e.,
$\mu=0$, then $H^{\e i}_{\rm{ab}}(S_{\e\rm{fl}},G)=H^{\e
i}(S_{\e\rm{\acute{e}t}},G^{\tor}\e)$ and $H^{\e
i}_{\rm{ab}}(S_{\e\rm{\acute{e}t}},G^{*})=H^{\le
i+1}(S_{\e\rm{\acute{e}t}},G^{\tor*}\e)$.
\end{enumerate}

\end{examples}

By \cite{ga3}, there exist canonical {\it abelianization} maps
\begin{equation}\label{abmaps}
{\rm{ab}}^{i}={\rm{ab}}^{i}_{G\lbe/\lbe S}\colon H^{\le
i}(S_{\e\rm{fl}},G)\ra H^{\e i}_{\rm{ab}}(S_{\e\rm{fl}},G\e)
\end{equation}
so that the following holds.

\begin{theorem} Let $G$ be a reductive group scheme over $S$.
\begin{enumerate}
\item[(i)] There exists an exact sequence of pointed sets
$$
\begin{array}{rcl}
1&\longrightarrow&\mu(S)\ra\Gtil(S)\ra G (S)
\overset{\rm{ab}^{0}}\longrightarrow H^{\e
0}_{\rm{ab}}(S_{\e\rm{fl}},G \e)\overset{\delta_{\le 0}}\lra H^{\le
1}\big(S_{\e\rm{fl}},\Gtil\e\big)\\
&\overset{\,\partial^{\e(1)}}\lra & H^{\le
1}(S_{\e\rm{fl}},G\e)\overset{\rm{ab}^{1}}\longrightarrow H^{\e
1}_{\lbe\rm{ab}}(S_{\e\rm{fl}},G \e)\ra 1.
\end{array}
$$
\item[(ii)] The group $H^{\e 0}_{\rm{ab}}(S_{\e\rm{fl}},G \e)$
acts on the right on the set $H^{\le
1}\big(S_{\e\rm{fl}},\Gtil\e\big)$ compatibly with the map
$\delta_{\le 0}$ and the preceding exact sequence induces an exact
sequence of pointed sets
$$
1\ra H^{\le 1}\big(S_{\e\rm{fl}},\Gtil\e\big)/H^{\e
0}_{\rm{ab}}(S_{\e\rm{fl}},G \e)\overset{\bar{\partial}^{\e(1)}}\lra
H^{\le 1}(S_{\e\rm{fl}},G\e)\overset{\rm{ab}^{1}}\longrightarrow
H^{\e 1}_{\lbe\rm{ab}}(S_{\e\rm{fl}},G\e)\ra 1,
$$
where the map $\bar{\partial}^{\e(1)}$, induced by
$\partial^{\e(1)}$, is {\rm injective}.
\end{enumerate}
\end{theorem}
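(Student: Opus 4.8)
The statement is the standard "abelianization exact sequence" for a reductive group scheme, so my approach is to extract it from the hypercohomology of the crossed module $(\Gtil\overset{\partial}\lra G)$ together with its quasi-isomorphism to the complex of centers $(Z(\Gtil)\overset{\partial_Z}\lra Z(G))$. First I would recall from \cite{ga3} that the abelian cohomology $H^i_{\rm{ab}}(S_{\rm{fl}},G)$ is by definition the hypercohomology of the latter complex, and that the abelianization maps $\mathrm{ab}^i$ of \eqref{abmaps} arise from the morphism of (pointed) sites/complexes sending $G$-torsor data to its abelianized class. The key structural input is the crossed-module hypercohomology long exact sequence: writing the two-term complex $C^\bullet=(\Gtil\to G)$ with $\Gtil$ in degree $-1$, there is an exact sequence of pointed sets relating $H^\bullet(S_{\rm{fl}},\Gtil)$, $H^\bullet(S_{\rm{fl}},G)$, and the hyperhomology $\mathbb H^\bullet(S_{\rm{fl}},C^\bullet)$. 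The quasi-isomorphism $(Z(\Gtil)\to Z(G))\hookrightarrow(\Gtil\to G)$ noted in the excerpt (from \cite{ga3}, Proposition 3.4) identifies this hyperhomology with the abelian cohomology, which is exactly what lets me replace the nonabelian hypercohomology by the computable $H^i_{\rm{ab}}$.

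For part (i), I would assemble the sequence term by term from this long exact sequence in low degrees. The segment $\mu(S)\to\Gtil(S)\to G(S)$ is just the degree-$0$ part of the central extension \eqref{seq3}, i.e.\ the sections functor applied to $1\to\mu\to\Gtil\overset{\partial}\to G$ truncated appropriately; exactness here is the usual left-exactness of global sections plus the definition of $\mu$ as $\krn\partial$ on centers. The connecting map $\mathrm{ab}^0\colon G(S)\to H^0_{\rm{ab}}$ and the boundary $\delta_0\colon H^0_{\rm{ab}}\to H^1(S_{\rm{fl}},\Gtil)$ are the degree-$0$ and degree-$1$ pieces of the hypercohomology boundary; $\partial^{(1)}$ is induced by $\partial$ on $H^1$; and surjectivity of $\mathrm{ab}^1$ onto $H^1_{\rm{ab}}$ follows because $\mathbb H^1(C^\bullet)$ is computed by a complex whose $H^1(G)$-term surjects onto the hyperhomology in the relevant degree. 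The only subtle points are the exactness at $H^1(S_{\rm{fl}},\Gtil)$ and at $H^1(S_{\rm{fl}},G)$, where one must check that $\img\delta_0=\krn\partial^{(1)}$ and $\krn\mathrm{ab}^1=\img\partial^{(1)}$ as pointed sets; both are formal consequences of the crossed-module hypercohomology formalism, and I would simply cite the relevant statement in \cite{ga3} rather than redo the cocycle bookkeeping.

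For part (ii), the content is upgrading the "exactness at $H^1(S_{\rm{fl}},G)$" in (i) to an \emph{injectivity} statement for the induced map on the quotient by the $H^0_{\rm{ab}}$-action. I would first define the right action of $H^0_{\rm{ab}}(S_{\rm{fl}},G)$ on $H^1(S_{\rm{fl}},\Gtil)$: via $\delta_0$ the group $H^0_{\rm{ab}}$ maps to (a group acting on) $H^1(\Gtil)$, and compatibility with $\delta_0$ means two classes in $H^1(\Gtil)$ have the same image under $\partial^{(1)}$ precisely when they lie in the same $H^0_{\rm{ab}}$-orbit. This is the usual fibration/twisting argument for boundary maps of nonabelian cohomology: the fibers of $\partial^{(1)}$ over a point are exactly the orbits of this action (the stabilizer/orbit description follows from exactness at $H^1(\Gtil)$ already proved in (i)). Passing to the quotient $H^1(S_{\rm{fl}},\Gtil)/H^0_{\rm{ab}}(S_{\rm{fl}},G)$ therefore makes $\bar\partial^{(1)}$ injective by construction, and the tail $H^1(S_{\rm{fl}},G)\overset{\mathrm{ab}^1}\to H^1_{\rm{ab}}\to 1$ is unchanged from (i).

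I expect the main obstacle to be \textbf{making the orbit description rigorous for torsors rather than cocycles}: proving that the fibers of $\partial^{(1)}\colon H^1(\Gtil)\to H^1(G)$ coincide with the $H^0_{\rm{ab}}$-orbits requires the standard but delicate twisting argument, where one twists $\Gtil$ by a $G$-torsor and compares cohomology of the twisted form. The flat-versus-\'etale bookkeeping ($\mu$ need not be smooth, so one genuinely works on $S_{\rm{fl}}$) adds a layer of care, but since everything is central and the quasi-abelian crossed-module structure is already in place from \cite{ga3}, this reduces to invoking the hypercohomology exact sequence of that paper together with the quasi-isomorphism \eqref{seq4}, so I would lean on those cited results to avoid grinding through the twisting by hand.
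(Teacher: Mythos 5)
Your overall machinery is the right one and matches the paper's: everything is pulled out of the nonabelian hypercohomology of the quasi-abelian crossed module $(\Gtil\overset{\partial}\ra G)$, identified with $H^{i}_{\rm{ab}}(S_{\rm{fl}},G)$ via the quasi-isomorphism with $(Z(\Gtil)\ra Z(G))$, with the formal exactness statements and the orbit description in (ii) cited from \cite{ga3} (the paper invokes \cite{ga3}, Theorem 5.1, Theorem 5.5 and Proposition 3.14(b), and the action of Remark 3.9(b)). Your treatment of part (ii) as the standard fibers-equal-orbits twisting argument is also essentially what Proposition 3.14(b) of \cite{ga3} provides.

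However, there is a genuine gap at the single point where the theorem is not formal: the surjectivity of $\rm{ab}^{1}\colon H^{1}(S_{\rm{fl}},G)\ra H^{1}_{\rm{ab}}(S_{\rm{fl}},G)$, i.e.\ the terminal ``$\ra 1$'' in both sequences. You assert this ``follows because $\mathbb H^1(C^\bullet)$ is computed by a complex whose $H^1(G)$-term surjects onto the hyperhomology in the relevant degree.'' No such formal argument exists: in the crossed-module formalism the sequence continues
$$
H^{1}(S_{\rm{fl}},G)\overset{\rm{ab}^{1}}\lra H^{1}_{\rm{ab}}(S_{\rm{fl}},G)\lra H^{2}\big(S_{\rm{fl}},\Gtil\,\big),
$$
where the last term is Giraud's nonabelian degree-$2$ cohomology, and a class in $H^{1}_{\rm{ab}}$ lifts to $H^{1}(S_{\rm{fl}},G)$ precisely when its obstruction (gerbe) class in $H^{2}\big(S_{\rm{fl}},\Gtil\,\big)$ is \emph{neutral}. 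Over a general base this fails, and $\rm{ab}^{1}$ is not surjective. The entire content of the paper's proof is exactly this point: by \cite{ga3}, Example 5.4(iii), the particular arithmetic scheme $S$ (an open subscheme of the spectrum of a ring of integers, or an affine curve over a finite field) is \emph{of Douai type}, meaning every class of $H^{2}\big(S_{\rm{fl}},\Gtil\,\big)$ is neutral; only then do the cited theorems of \cite{ga3} yield the surjectivity and hence the sequences as stated. Your proposal never mentions this neutrality condition, and treats as bookkeeping the one step that is a nontrivial arithmetic theorem about $S$; as written, the argument would prove the same statement over an arbitrary base scheme, where it is false.
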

\begin{proof} By \cite{ga3}, Example 5.4(iii), $S$ is a scheme
{\it of Douai type}, i.e., every class of the Giraud cohomology set
$H^{\le 2}\big(S_{\e\rm{fl}},\Gtil\e\big)$ is neutral. Thus the
theorem follows from \cite{ga3}, Theorem 5.1, Theorem 5.5 and
Proposition 3.14(b). The action mentioned in (ii) is defined in
[op.cit.], Remark 3.9(b).
\end{proof}

\begin{remark} The exact sequence in part (ii) of the theorem is
compatible with inverse images, i.e., if $S^{\e\prime}\ra S$ is a
morphism of schemes of Douai type, then the following diagram
commutes
$$
\xymatrix{1\ra H^{\e
1}\be\big(S_{\e\rm{\acute{e}t}},\Gtil\e\big)/H^{\e
0}_{\rm{ab}}(S_{\e\rm{fl}},G\e)\ar[r]\ar[d]& H^{\e
1}\be(S_{\e\rm{\acute{e}t}},G\e)\ar[r]\ar[d]& H^{\e
1}_{\rm{ab}}(S_{\e\rm{fl}},G\e)\ar[d]\ar[r]&1\\
1\ra H^{\e
1}\be\big(S^{\e\prime}_{\rm{\acute{e}t}},\Gtil\e\big)/H^{\e
0}_{\rm{ab}}(S^{\e\prime}_{\rm{fl}},G\e)\ar[r]& H^{\e
1}\be(S^{\e\prime}_{\rm{\acute{e}t}},G\e)\ar[r]& H^{\e
1}_{\rm{ab}}(S^{\e\prime}_{\rm{fl}},G\e)\ar[r]&1.}
$$
This follows from \cite{ga3}, Remark 4.3, and the fact that the
action of $H^{\e 0}_{\rm{ab}}(S_{\e\rm{fl}},G\e)$ on $H^{\e
1}\be\big(S_{\e\rm{\acute{e}t}},\Gtil\e\big)$ is compatible with
inverse images (see [op.cit.], Remark 3.9(b)).
\end{remark}

\medskip

We will write $S_{0}$ for the set of closed points of $S$ and
$\Sigma$ for the set of primes of $K$ which do not correspond to a
point of $S_{0}$. Thus $\Sigma$ is nonempty and contains all
archimedean primes of $K$ in the number field case. For any prime
$v$ of $K$, $K_{\lbe v}$ will denote the completion of $K$ at $v$.
If $v\in S_{0}$, we will write $\ov$ for the ring of integers of
$K_{v}$ and $k(v)$ for the corresponding residue field. Note that,
since $k(v)$ is finite, every $k(v)$-torus is cohomologically
trivial by \cite{se}, Propositions 5(iii) and 6(b), pp. II-7-8. If
$v$ is a real prime of $K$, $i$ is any integer and $C$ is a
cohomologically bounded complex of abelian sheaves on $(\spec
K_{v})_{\rm{fl}}$, ${\bh}^{\e i}\big(K_{v,\rm{fl}},C)$ will denote
the {\it modified} (Tate) $i$-th hypercomology group of $C$ defined
in \cite{hs}, p.103. In particular, the groups $H^{i}_{\rm{ab}}
(K_{v,\rm{fl}},G)={\bh}^{\e i}\big(K_{v},Z\big(\lbe\Gtil\big)\ra
Z(G\e))$ coincide with the groups denoted $H^{i}_{\rm{ab}}(K_{v},G)$
in \cite{bor}.

\section{The abelian class group}

Let $K$ and $S$ be as in the Introduction. Let $G$ be a (connected)
reductive algebraic group over $K$ and let $T$ be a $K$-torus. Set
$$\begin{array}{rcl}
\Sha^{1}\lbe(K,G)&=&\krn\!\!\left[\e H^{1}(K,G\e)
\ra\displaystyle\prod_{\text{all $v$}}H^{1}(K_{v},G\e)\right],\\\\
\Sha^{\le 2}\lbe(K,T)&=&\krn\!\!\left[\e H^{\e
2}(K,T\e)\ra\displaystyle \prod_{\text{all $v$}}H^{\e
2}(K_{v},T\e)\right]
\end{array}
$$
and
\begin{equation}\label{absha}
\Sha^{1}_{\rm{ab}}\lbe(K,G)=\krn\!\!\left[\e
H^{1}_{\rm{ab}}(K_{\e\rm{fl}},G\e) \ra\displaystyle \prod_{\text{all
$v$}}H^{1}_{\rm{ab}} (K_{v,\rm{fl}},G\e)\right].
\end{equation}

\begin{proposition} Let $G$ be a (connected) reductive algebraic group over $K$. Let $1\ra F\ra H\ra G\ra 1$ be a
flasque resolution of $G$. Then the given resolution defines an
isomorphism
$$
\Sha^{1}_{\rm{ab}}\lbe(K,G)\simeq\Sha^{\le 2}(K,F).
$$
\end{proposition}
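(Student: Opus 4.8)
The goal is to identify $\Sha^1_{\rm{ab}}(K,G)$ with $\Sha^2(K,F)$ using the flasque resolution $1\ra F\ra H\ra G\ra 1$. First I would exploit the fact that a flasque resolution produces a quasi-isomorphism at the level of abelian cohomology: since $H\ra G$ is a central extension by the flasque (hence, in particular, induced-torus-like) torus $F$ and $H^{\rm{der}}=G^{\rm{der}}$, the simply-connected covers agree, $\widetilde{H}=\Gtil$, and the fundamental group of $H$ is $F$ (i.e.\ $\mu_H=F$ sitting inside $\Gtil$ appropriately). The key structural input is that for a flasque resolution the complex $(Z(\Gtil)\ra Z(G))$ computing $H^i_{\rm{ab}}(K_{\rm{fl}},G)$ can be compared with the two-term complex built from $F$. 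Concretely, I expect that the resolution yields an isomorphism $H^i_{\rm{ab}}(K_{\rm{fl}},G)\simeq H^{i+1}(K_{\rm{\acute{e}t}},F)$ for all $i$, realized functorially in $K$ and hence compatible with restriction to all completions $K_v$.

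Granting this local-global-compatible isomorphism in degree $i=1$, namely $H^1_{\rm{ab}}(K_{\rm{fl}},G)\simeq H^2(K,F)$ and likewise $H^1_{\rm{ab}}(K_{v,\rm{fl}},G)\simeq H^2(K_v,F)$ compatibly with the maps induced by $\spec K_v\ra\spec K$, the proposition follows by taking kernels. That is, one has a commutative diagram
$$
\begin{array}{ccc}
H^1_{\rm{ab}}(K_{\rm{fl}},G)&\labelto{\sim}&H^2(K,F)\\
\downarrow&&\downarrow\\
\prod_{v}H^1_{\rm{ab}}(K_{v,\rm{fl}},G)&\labelto{\sim}&\prod_{v}H^2(K_v,F),
\end{array}
$$
whose vertical arrows are the localization maps defining the two Shafarevich groups. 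Since the horizontal isomorphisms are compatible with these vertical maps, they restrict to an isomorphism on kernels, giving $\Sha^1_{\rm{ab}}(K,G)\simeq\Sha^2(K,F)$ as in \eqref{absha}.

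**Establishing the degree-shift isomorphism.** The heart of the argument is the comparison $H^i_{\rm{ab}}(K_{\rm{fl}},G)\simeq H^{i+1}(K_{\rm{\acute{e}t}},F)$. To obtain this I would use the exact sequence \eqref{seq4} together with the defining property of a flasque resolution. A flasque resolution $1\ra F\ra H\ra G\ra 1$ induces a commuting ladder relating the central extension \eqref{seq2} of $G$ to that of $H$, and one checks that passing from $G$ to $H$ replaces the fundamental group $\mu$ by $F$ while preserving $\Gtil$. Then the abelian-cohomology long exact sequence \eqref{kamb} for $G$, compared with that for $H$ (which has trivial fundamental group contribution because $H$ itself absorbs the twisting into $F$), isolates $F$: the relevant term becomes $H^{i+1}(K_{\rm{\acute{e}t}},F)$. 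I would make this precise by observing that the mapping cone of $(Z(\Gtil)\ra Z(G))$ against the analogous complex for $H$ is quasi-isomorphic to $F[1]$, which is exactly the statement that the flasque resolution effects a shift by one and replaces the crossed-module complex by $F$.

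**Main obstacle.** The routine part is the diagram chase on kernels once the isomorphisms are in place; the genuine difficulty is verifying the quasi-isomorphism between the two-term complex $(Z(\Gtil)\overset{\partial_Z}\longrightarrow Z(G))$ and $F[1]$ arising from the flasque resolution, \emph{and} checking that this identification is canonical and natural under base change $\spec K_v\ra\spec K$. Naturality is essential: the two Shafarevich groups are defined as kernels of localization maps, so without compatibility with restriction to $K_v$ the comparison on kernels would not go through. I expect the cleanest route is to phrase everything in the derived category of sheaves on $K_{\rm{fl}}$, where restriction is manifestly functorial, and to deduce the quasi-isomorphism from the flasque-resolution formalism of \cite{ga4} (which should supply exactly the needed compatibility), rather than manipulating the long exact sequences \eqref{kamb} by hand for each $v$.
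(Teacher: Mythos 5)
Your argument hinges on the claim that the flasque resolution yields an isomorphism $H^{i}_{\rm{ab}}(K_{\rm{fl}},G)\simeq H^{i+1}(K,F)$, i.e.\ that the resolution lets you replace the complex $\big(Z\big(\Gtil\,\big)\ra Z(G)\big)$ by $F[1]$. This is false, and the error is visible in your own cone computation. What is true (and what your cone argument actually proves) is a distinguished triangle whose third vertex is the abelianization complex of $H$, equivalently a quasi-isomorphism of $\big(Z\big(\Gtil\,\big)\ra Z(G)\big)$ with the \emph{two-term} complex $C=(F\ra R)$, where $R=H^{\tor}$. Since $H^{\der}$ is simply connected, $H$ has trivial fundamental group (not fundamental group $F$, as you assert), so $H^{i}_{\rm{ab}}(K_{\rm{fl}},H)=H^{i}(K,R)$, and the long exact sequence reads
$$
H^{1}(K,R)\ra H^{1}_{\rm{ab}}(K_{\rm{fl}},G)\ra H^{2}(K,F)\ra H^{2}(K,R).
$$
Hilbert's Theorem 90 for the quasi-trivial torus $R$ kills only the left-hand term, so you get an \emph{injection} $H^{1}_{\rm{ab}}(K_{\rm{fl}},G)\into H^{2}(K,F)$ with image $\krn[\e H^{2}(K,F)\ra H^{2}(K,R)]$, and $H^{2}(K,R)$ does not vanish. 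Concretely, for $G=PGL_{2}$ with the flasque resolution $1\ra\bg_{m}\ra GL_{2}\ra PGL_{2}\ra 1$ one has $H^{1}_{\rm{ab}}(K_{\rm{fl}},G)=H^{2}(K,\mu_{2})=\br(K)_{2}$, while $H^{2}(K,F)=\br(K)$, and the natural map is the inclusion $\br(K)_{2}\subset\br(K)$, which is far from surjective. So there is no localization-compatible isomorphism of cohomology groups to restrict to kernels, and the "routine diagram chase" you defer to has nothing to chase.

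The proposition is nevertheless true, but for arithmetic reasons that your proposal never invokes: the $R$-terms can only be killed \emph{after} passing to the Tate--Shafarevich kernels. Injectivity of $\Sha^{1}_{\rm{ab}}(K,G)\ra\Sha^{\le 2}(K,F)$ follows from the global injection above; surjectivity requires (i) $\Sha^{\le 2}(K,R)=0$, which holds because $R$ is quasi-trivial and $\Sha^{\le 2}(L,\bg_{m})=0$ for every finite separable extension $L/K$ (Brauer--Hasse--Noether), so that any class of $\Sha^{\le 2}(K,F)$ lifts to $H^{1}_{\rm{ab}}(K_{\rm{fl}},G)$; and (ii) $H^{1}(K_{v},R)=0$ for all $v$ (Hilbert 90 again), so that the local injectivity $H^{1}_{\rm{ab}}(K_{v,\rm{fl}},G)\into H^{2}(K_{v},F)$ forces the lift to be locally trivial. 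This is exactly how the paper argues: it records the vanishing $\Sha^{\le 2}(K,R)=0$ and then cites \cite{ga4}, Proposition 4.5, which packages the exact sequence above together with its compatibility with localization. The gap in your proposal is precisely the disappearance of the torus $R$: the claimed degree-shift isomorphism is wrong at the level of cohomology groups, and the statement you are proving is genuinely arithmetic rather than formal.
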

\begin{proof} Since $\Sha^{\le 2}(L,\bg_{m})=0$ for every finite separable
extension $L$ of $K$, $\Sha^{\le 2}(K,R)=0$. The result is now
immediate from \cite{ga4}, Proposition 4.5.
\end{proof}

\begin{remark} By \cite{ga3}, Corollary 5.10, there exists a bijection
$\Sha^{1}\lbe(K,G\e)\simeq\Sha^{1}_{\rm{ab}}\lbe(K,G\e)$. Thus the
proposition yields a bijection $\Sha^{1}\lbe(K,G\e)\simeq\Sha^{\le
2}(K,F)$ which extends to an arbitrary global field $K$ the
bijection of \cite{ct}, Theorem 9.4(ii). Now, if $G$ is {\it
semisimple}, then \cite{ga4}, (3.4) (with $G^{\tor}=0$), yields an
isomorphism $\Sha^{2}(K,\mu)\simeq\Sha^{\le 2}(K,F)$. Thus there
exists a bijection $\Sha^{1}\lbe(K,G\e)\simeq\Sha^{2}(K,\mu)$ which
extends to the function field case the bijection of \cite{san},
Corollary 4.4.
\end{remark}

Now let
\begin{equation}\label{shas}
\Sha^{\le 1}_{S}(K,G\e)=\krn\!\!\left[\e H^{\e
1}\lbe(K,G\e)\ra\displaystyle \prod_{v\in S_{0}}H^{\e
1}\be\big(K_{v},G\e\big)\right]
\end{equation}
and
\begin{equation}\label{shabs}
\Sha^{1}_{{\rm{ab}},S}\e(K,G\e)=\krn\!\!\left[\e H^{1}_{\rm{ab}}(K_{
\rm{fl}},G\e)\ra\displaystyle \prod_{v\in
S_{0}}H^{1}_{\rm{ab}}\be\big(K_{v,{\rm{fl}}},G\e\big)\right].
\end{equation}
The latter group is torsion since, by \eqref{kamb},
$H^{1}_{\rm{ab}}(K_{ \rm{fl}},G\e)$ is a torsion group.

\begin{lemma} The abelianization map $\e{\rm{ab}}_{G\lbe/\lbe K}^{1}\colon H^{\e 1}
\lbe(K,G\e)\ra H^{1}_{\rm{ab}}(K_{ \rm{fl}},G\e)$ induces a
surjection
$$
\Sha^{\le 1}_{S}(K,G\e)\twoheadrightarrow \Sha^{1}_{{\rm{ab}},
S}\e(K,G\e).
$$
\end{lemma}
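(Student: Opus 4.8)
The plan is to build a commutative square relating the abelianization maps over $K$ and over each completion $K_v$, and then chase the Shafarevich-Tate kernels through it. The key ingredient is the abelianization map $\mathrm{ab}^1_{G/K}$ together with its local counterparts $\mathrm{ab}^1_{G/K_v}$. These fit into a commutative diagram
\begin{equation*}
\xymatrix{
H^{1}(K,G)\ar[r]^{\mathrm{ab}^{1}_{G/K}}\ar[d] & H^{1}_{\mathrm{ab}}(K_{\mathrm{fl}},G)\ar[d]\\
\displaystyle\prod_{v\in S_{0}}H^{1}(K_{v},G)\ar[r] & \displaystyle\prod_{v\in S_{0}}H^{1}_{\mathrm{ab}}(K_{v,\mathrm{fl}},G),
}
\end{equation*}
where the vertical arrows are the restriction (localization) maps and the bottom horizontal map is the product of the local abelianization maps. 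Commutativity follows from the functoriality of the abelianization construction of \cite{ga3} with respect to the base-change morphisms $\spec K_{v}\ra\spec K$. By definition, $\mathrm{ab}^1_{G/K}$ carries $\Sha^{\le 1}_{S}(K,G)$ into $\Sha^{1}_{\mathrm{ab},S}(K,G)$, which gives the asserted map; it remains to prove surjectivity.

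First I would establish that $\mathrm{ab}^1_{G/K}$ is itself surjective onto $H^{1}_{\mathrm{ab}}(K_{\mathrm{fl}},G)$. This is exactly the surjectivity of $\mathrm{ab}^1$ in Theorem~2.3(i) (or (ii)), applied with $S=\spec K$, which is legitimate since $\spec K$ is of Douai type. So every class $\xi\in\Sha^{1}_{\mathrm{ab},S}(K,G)\subseteq H^{1}_{\mathrm{ab}}(K_{\mathrm{fl}},G)$ lifts to some $\eta\in H^{1}(K,G)$ with $\mathrm{ab}^1_{G/K}(\eta)=\xi$. The difficulty is that such a lift need not lie in $\Sha^{\le 1}_{S}(K,G)$: its local restrictions $\eta_v\in H^{1}(K_{v},G)$ may be nontrivial even though their abelianizations $\mathrm{ab}^1_{G/K_v}(\eta_v)$ vanish for all $v\in S_{0}$ (because $\xi\in\Sha^{1}_{\mathrm{ab},S}$).

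The heart of the argument is therefore to correct the lift locally. The fibers of the local abelianization map are governed by the exact sequence of Theorem~2.3(ii) applied over each $K_v$: the map $\bar\partial^{\e(1)}\colon H^{1}(K_{v,\mathrm{fl}},\Gtil)/H^{0}_{\mathrm{ab}}(K_{v,\mathrm{fl}},G)\ra H^{1}(K_{v},G)$ is injective with image the fiber of $\mathrm{ab}^1_{G/K_v}$ over the base point. Since $\mathrm{ab}^1_{G/K_v}(\eta_v)=0$, each $\eta_v$ comes from a class in $H^{1}(K_{v},\Gtil)$ via $\partial^{(1)}$. \textbf{The main obstacle} is then to show that this family of local classes $(\eta_v)_{v\in S_0}$ can be produced by a single global class $\zeta\in H^{1}(K,\Gtil)$, so that twisting $\eta$ by $\partial^{(1)}(\zeta)$ yields a new lift with trivial restrictions at all $v\in S_{0}$. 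This requires the surjectivity of the localization map
\begin{equation*}
H^{1}(K,\Gtil)\ra\prod_{v\in S_{0}}H^{1}(K_{v},\Gtil)
\end{equation*}
for the simply-connected group $\Gtil$, or rather surjectivity onto the relevant product of local fibers. For $\Gtil$ simply connected this follows from the fact that $H^{1}(K_v,\Gtil)$ vanishes at all nonarchimedean $v$ (Kneser, and its function-field analogue) together with the Hasse principle $\Sha^{1}(K,\Gtil)=0$, which is where the hypothesis that $K$ has no real primes—implicit in the running setup—is used: it guarantees $H^{1}(K_v,\Gtil)=0$ for \emph{every} $v\in S_0$ and thus makes the correction automatic. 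Once $\eta$ is corrected to lie in $\Sha^{\le 1}_{S}(K,G)$ while still abelianizing to $\xi$, surjectivity follows and the proof is complete.
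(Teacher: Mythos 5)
Your argument is correct, but it is not the paper's argument, and it carries one misconception that needs to be flagged. The paper's proof is a short diagram chase resting on two cited facts: the surjectivity of $\mathrm{ab}^{1}_{G/K}$ (\cite{ga3}, Theorem 5.5(i)) and the \emph{bijectivity} of the local abelianization maps $H^{1}(K_{v},G)\ra H^{1}_{\rm{ab}}(K_{v,\rm{fl}},G)$ for $v\in S_{0}$ (\cite{ga3}, Theorem 5.8(i) and Remark 5.9(a)), which is available precisely because every $v\in S_{0}$ is nonarchimedean. Injectivity of the bottom map of the square then forces any lift $\eta$ of a class $\xi\in\Sha^{1}_{{\rm{ab}},S}(K,G)$ to have trivial restriction at every $v\in S_{0}$, and one is done. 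You never invoke this local bijectivity; instead you re-derive exactly the piece of it that is needed---triviality of the fiber of $\mathrm{ab}^{1}_{G/K_{v}}$ over the base point---from the exact sequence $H^{1}(K_{v},\Gtil)\ra H^{1}(K_{v},G)\ra H^{1}_{\rm{ab}}(K_{v,\rm{fl}},G)$ of Theorem 2.2(i) applied over $K_{v}$, combined with the vanishing $H^{1}(K_{v},\Gtil)=0$ at nonarchimedean places (Kneser, Bruhat--Tits). That substitution is legitimate and makes the proof more self-contained. However, your ``main obstacle'' paragraph (gluing the local classes $(\eta_{v})$ into a global $\zeta\in H^{1}(K,\Gtil)$, the Hasse principle $\Sha^{1}(K,\Gtil)=0$, the twisting) is dead weight: since each $H^{1}(K_{v},\Gtil)$ vanishes, each $\eta_{v}$ is the image of the trivial class and hence trivial, so $\eta$ already lies in $\Sha^{\le 1}_{S}(K,G)$ and there is nothing to correct.

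The misconception: you assert that the hypothesis that $K$ has no real primes is ``implicit in the running setup'' and is what guarantees $H^{1}(K_{v},\Gtil)=0$ for every $v\in S_{0}$. Both claims are false. The lemma is stated and proved in the paper for an arbitrary global field, and no such hypothesis appears before Theorem 3.11. More to the point, the vanishing you need is unconditional: $S_{0}$ is the set of closed points of $S$, so every $v\in S_{0}$ is nonarchimedean---the archimedean (in particular real) primes all lie in $\Sigma$, never in $S_{0}$. Your proof is therefore correct once this spurious hypothesis is deleted; leaving it in would turn an unconditional lemma into an apparently conditional one.
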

\begin{proof} This follows from the commutative diagram
$$
\xymatrix{H^{\le 1}(K,G\e)\ar@{->>}[r]^(.45){{\rm{ab}}_{G\lbe/\lbe
K}^{1}} \ar[d]& H^{\le
1}_{\rm{ab}}(K_{\e\rm{fl}},G\e)\ar[d] \\
\displaystyle\prod_{v\in S_{0}}H^{1}(K_{v},G\e)\ar[r]^(.45){\sim}&
\displaystyle\prod_{v\in S_{0}}H^{\le 1}_{\rm{ab}}(K_{\e
v,\rm{fl}},G\e),}
$$
whose top map is surjective by \cite{ga3}, Theorem 5.5(i), and
bottom map is bijective by [op.cit.], Theorem 5.8(i) and Remark
5.9(a).
\end{proof}

Now let $G$ be any affine $S$-group scheme of finite type with
smooth generic fiber. Let $\mathbb A_{S}$ (respectively, $\mathbb
A_{S}(S)$) denote the ring of adeles (respectively, integral adeles)
of $S$ and let
$$
C(G\e)=G\be\left(\lbe{\mathbb A}_{\e S}(S)\lbe\right)\backslash\e
G\be\left(\lbe{\mathbb A}_{\e S}\lbe\right)\be/\e G(\lbe K\lbe)
$$
be the class set of $G$. Since $G$ is generically smooth,
Ye.Nisnevich has shown that there exists a canonical bijection
$C(G\e)\simeq H^{\le 1}(S_{\rm{Nis}},u_{*}G\e)$, where $u\colon
S_{\e\rm{\acute{e}t}}\ra S_{\rm{Nis}}$ is the canonical morphism of
sites. See \cite{ga2}, Theorem 3.5.

\begin{theorem} {\rm(Ye.Nisnevich)} Let $G$ be an affine group scheme of
finite type over $S$ with smooth generic fiber. Assume that $H^{\e
1}\lbe(\ove,G\e)$ is trivial for every $v\in S_{0}$. Then there
exists an exact sequence of pointed sets
$$
1\ra C(G\e)\overset{\varphi}\lra H^{\e
1}\lbe(S_{\e\rm{\acute{e}t}},G\e)\ra\Sha^{\le 1}_{S}(K,G\e)\ra 1,
$$
where the map $\varphi$ is injective and $\Sha^{\le 1}_{S}(K,G\e)$
is the set \eqref{shas}.
\end{theorem}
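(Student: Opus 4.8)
The plan is to identify the class set $C(G)$ with a Nisnevich cohomology set and then compare the Nisnevich and étale cohomologies through a localization (or ``arithmetic'') exact sequence, with the difference being measured by local terms that vanish under the stated hypothesis. Concretely, I would start from the bijection $C(G)\simeq H^{1}(S_{\rm{Nis}},u_{*}G)$ recalled just above the statement, where $u\colon S_{\e\rm{\acute{e}t}}\ra S_{\rm{Nis}}$ is the canonical morphism of sites. The map $\varphi$ will be the composite of this bijection with the edge map $H^{1}(S_{\rm{Nis}},u_{*}G)\ra H^{1}(S_{\e\rm{\acute{e}t}},G)$ coming from the Leray spectral sequence (or the low-degree exact sequence) for $u$ and the sheaf $G$ on $S_{\e\rm{\acute{e}t}}$.

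**Next I would** analyze this edge map. The low-degree terms of the Leray sequence read $1\ra H^{1}(S_{\rm{Nis}},u_{*}G)\ra H^{1}(S_{\e\rm{\acute{e}t}},G)\ra H^{0}(S_{\rm{Nis}},R^{1}u_{*}G)$, which immediately gives injectivity of $\varphi$. The sheaf $R^{1}u_{*}G$ on $S_{\rm{Nis}}$ has stalk at a closed point $v$ equal to $H^{1}$ of $G$ over the strict (or rather the Nisnevich-local, i.e. Henselian) local ring at $v$, and by the standard Hensel-to-completion comparison this is governed by $H^{1}(K_{v},G)$ for $v\in S_{0}$, while the generic point contributes $H^{1}(K,G)$ at the level of the identification of the cokernel. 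Matching these local contributions is exactly what produces the kernel $\Sha^{\le 1}_{S}(K,G\e)$ of \eqref{shas}: an étale class lies in the image of $\varphi$ precisely when its image in $H^{1}(K,G)$ is locally trivial at every $v\in S_{0}$, i.e. lies in $\Sha^{\le 1}_{S}(K,G\e)$. The hypothesis that $H^{1}(\ove,G)$ is trivial for every $v\in S_{0}$ is what kills the integral local contributions and ensures that the relevant stalks of $R^{1}u_{*}G$ reduce to the purely generic local conditions defining \eqref{shas}.

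**The surjectivity onto $\Sha^{\le 1}_{S}(K,G\e)$** would be established by a patching argument: given a class in $H^{1}(K,G)$ that is trivial over each $K_{v}$ with $v\in S_{0}$, one spreads it out to an étale $G$-torsor over a dense open subscheme and then uses the triviality of $H^{1}(\ove,G)$ (equivalently, of the local torsors) to extend the torsor across each closed point $v\in S_{0}$, obtaining a genuine class in $H^{1}(S_{\e\rm{\acute{e}t}},G)$ mapping to the prescribed generic class. This is where the hypothesis is used in an essential way.

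**The main obstacle** I expect is the noncommutative bookkeeping: since $G$ is only an \emph{affine group scheme of finite type} (not assumed reductive or even commutative here), all the cohomology objects are pointed sets, not groups, so one cannot invoke the full machinery of spectral sequences and must instead argue with exact sequences of pointed sets and twisting. In particular, pinning down the image of $\varphi$ as an honest kernel requires a careful torsor-theoretic (rather than purely homological) identification of the fibers of the map $H^{1}(S_{\e\rm{\acute{e}t}},G)\ra\prod_{v\in S_{0}}H^{1}(K_{v},G)$, using that $G$ is generically smooth so that the sheaf-theoretic picture of \cite{ga2}, Theorem 3.5, applies, and verifying that the local triviality at closed points furnished by the hypothesis really lets one glue. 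Once the pointed-set version of the localization sequence is set up correctly, the three claimed properties (injectivity of $\varphi$, exactness in the middle, and surjectivity onto $\Sha^{\le 1}_{S}(K,G\e)$) follow formally.
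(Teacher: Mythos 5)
Your route is the same one the paper takes: the paper's proof consists precisely of citing the bijection $C(G)\simeq H^{1}(S_{\rm{Nis}},u_{*}G)$ from \cite{ga2}, Giraud's exact sequence of pointed sets for the morphism of sites $u$ (\cite{gi}, Proposition V.3.1.3 --- this \emph{is} the nonabelian low-degree ``Leray'' sequence you invoke, so the noncommutative bookkeeping you worry about is exactly what that reference supplies), and Nisnevich's Proposition 1.37 in \cite{nis}, which is where your stalk analysis and gluing argument live. Your injectivity step and your patching argument for surjectivity onto $\Sha^{\le 1}_{S}(K,G\e)$ are correct sketches of what those references do.

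There is, however, a genuine error in your middle-exactness step. You assert that an \'etale class lies in the image of $\varphi$ precisely when its image in $H^{1}(K,G\e)$ is locally trivial at every $v\in S_{0}$, i.e.\ lies in $\Sha^{\le 1}_{S}(K,G\e)$. Under the hypothesis of the theorem this condition holds for \emph{every} class of $H^{1}(S_{\rm{\acute{e}t}},G\e)$: for $v\in S_{0}$ the restriction to $\spec K_{v}$ factors through $H^{1}(\ove,G\e)=1$ (the composition $\spec K_{v}\ra\spec K\ra S$ coincides with $\spec K_{v}\ra\spec\ove\ra S$), which is exactly why the restriction map $\lambda\colon H^{1}(S_{\rm{\acute{e}t}},G\e)\ra H^{1}(K,G\e)$ takes values in $\Sha^{\le 1}_{S}(K,G\e)$ at all. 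With your characterization, $\img\varphi$ would be all of $H^{1}(S_{\rm{\acute{e}t}},G\e)$, and exactness plus surjectivity would force $\Sha^{\le 1}_{S}(K,G\e)=1$, which is false in general. The correct statement coming out of Giraud's sequence is that $\img\varphi$ consists of the classes that die at all Nisnevich stalks, i.e.\ over each Henselization $\mathcal{O}^{h}_{v}$ \emph{and} over $K$; the hypothesis (together with the comparison of $H^{1}(\mathcal{O}^{h}_{v},G\e)$ with $H^{1}(\ove,G\e)$ --- a point you also elide, since the hypothesis concerns completions while Nisnevich stalks are Henselizations) makes the closed-point conditions automatic, so that $\img\varphi=\krn\!\left[H^{1}(S_{\rm{\acute{e}t}},G\e)\ra H^{1}(K,G\e)\right]=\krn\lambda$. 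In short, you have conflated ``generically trivial'' with ``generically everywhere-locally trivial''; keeping these apart is the entire content of the exact sequence, since the discrepancy between the two is the third term $\Sha^{\le 1}_{S}(K,G\e)$.
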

\begin{proof} This follows from
\cite{gi}, Proposition V.3.1.3, p.323, using the bijection
$C(G\e)\simeq H^{\le 1}(S_{\rm{Nis}},u_{*}G\e)$ recalled above and
\cite{nis}, Proposition 1.37, p.282.
\end{proof}

\begin{remark} The fibers of the map $\lambda\colon H^{\e
1}\lbe(S_{\e\rm{\acute{e}t}},G\e)\ra\Sha^{\le 1}_{S}(K,G\e)$
appearing above may be computed as follows (see \cite{gi}, Corollary
V.3.1.4, p.324): let $p\in H^{\e 1}\lbe(S_{\e\rm{\acute{e}t}},G\e)$,
choose a $G$-torsor $P$ representing $p$ and let
$\!\phantom{.}^{P}\be\lbe G$ be the twist of $G$ by $P$ (see
\cite{gi}, Proposition III.2.3.7, p.146). Let $\theta_{P}\colon
H^{\e 1}\be(S_{\e\rm{\acute{e}t}},G\e)\overset{\!\sim}\ra H^{\e
1}\be(S_{\e\rm{\acute{e}t}},\be\!\phantom{.}^{P}\be G\e)$ be the
bijection defined in [op.cit.], Remark III.2.6.3, p.154. Then
$\theta_{P}^{-1}\circ\be\!\phantom{.}^{P}\!\varphi$ induces a
bijection
$$
C\big(\be\!\phantom{.}^{P}\! G\e\big)\overset{\!\sim}\ra\{\e
p^{\e\prime}\in H^{\e 1}\be(S_{\e\rm{\acute{e}t}},G\e)\colon
\lambda\be\left(p^{\e\prime}\e\right)=\lambda(p)\}.
$$
\end{remark}

\smallskip

\begin{corollary} Let $G$ be a reductive group scheme over $S$.
Then there exists an exact sequence of pointed sets
$$
1\ra C(G\e)\ra H^{\e 1}\lbe(S_{\e\rm{\acute{e}t}},G\e)\ra\Sha^{\le
1}_{S}(K,G\e)\ra 1.
$$
\end{corollary}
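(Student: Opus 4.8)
The plan is to obtain this as an immediate consequence of the preceding theorem of Nisnevich. A reductive group scheme $G$ over $S$ is by definition affine and smooth over $S$, so in particular it is an affine $S$-group scheme of finite type with smooth generic fiber. Consequently the only hypothesis of that theorem which still needs to be checked is the triviality of $H^{\le 1}(\ove, G\e)$ for every $v \in S_{0}$; once this is verified, the asserted exact sequence is precisely the one produced by the theorem.

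To prove that $H^{\le 1}(\ove, G\e)=1$, I would proceed in two steps. First, $\ove$ is a complete discrete valuation ring, hence Henselian, with finite residue field $k(v)$, and $G$ is smooth over $\ove$. For a smooth affine group scheme over a Henselian local ring, reduction to the closed point induces a bijection on first cohomology, so that
$$
H^{\le 1}(\ove, G\e)\simeq H^{\le 1}\be\big(k(v),G_{k(v)}\big),
$$
where $G_{k(v)}$ denotes the special fiber of $G$ at $v$. (This rests on the facts that torsors under a smooth group are trivialized by \'etale covers and that the small \'etale sites of $\ove$ and of $k(v)$ are equivalent.) Second, $G_{k(v)}$ is a connected reductive algebraic group over the finite field $k(v)$, and therefore Lang's theorem gives $H^{\le 1}\be\big(k(v),G_{k(v)}\big)=1$. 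Combining the two steps yields $H^{\le 1}(\ove, G\e)=1$ for every $v\in S_{0}$, as required.

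The single point that genuinely requires care is the reduction isomorphism of the first step: it is here that both the smoothness of $G$ and the fact that $\ove$ is Henselian are used, and it is exactly what reduces the verification to the classical vanishing statement over finite fields. Everything else is a formal invocation of the preceding theorem, so I would simply cite a standard reference for the Henselian reduction bijection and for Lang's theorem and then conclude without any further computation.
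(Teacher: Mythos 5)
Your proposal is correct and follows essentially the same route as the paper: invoke Nisnevich's theorem (Theorem 3.4), reduce the remaining hypothesis to the bijection $H^{1}(\ove,G\e)\simeq H^{1}(k(v),G\e)$ over the Henselian local ring $\mathcal O_{v}$ (for which the paper cites \cite{sga3}, XXIV, Proposition 8.1, rather than re-deriving it), and conclude by Lang's theorem over the finite field $k(v)$.
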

\begin{proof} By the theorem, we only need to check that
$H^{\e 1}\lbe(\ove,G\e)$ is trivial for every $v\in S_{0}$. By
\cite{sga3}, XXIV, Proposition 8.1, there exists a canonical
bijection $H^{1}(\ove,G\e)\simeq H^{1}(k(v),G\e)$. Now, since
$G\times_{S}\spec k(v)$ is a connected algebraic group over the
finite field $k(v)$, the set $H^{1}(k(v),G\e)$ is trivial by Lang's
theorem \cite{la}, Theorem 2, p.557.
\end{proof}

\begin{corollary} Let $G$ be a semisimple and simply-connected group scheme over $S$.
\begin{enumerate}
\item[(i)] There exists an exact sequence of pointed sets
$$
1\ra C(G\e)\ra H^{\e 1}(S_{\e\rm{\acute{e}t}},G\e) \ra H^{\e
1}(K,G\e)\ra 1.
$$
\item[(ii)] The canonical localization map $H^{\e
1}(K,G\e)\ra\prod_{\,v\,\rm{real}}H^{\le 1}(K_{v},G\e)$ is
bijective.
\end{enumerate}
\end{corollary}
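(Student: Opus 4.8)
The plan is to derive both parts from the preceding corollary (the reductive case) together with the standard local and global theory of Galois cohomology of simply-connected groups. For part (i), I would apply that corollary to $G$, which is in particular reductive, obtaining the exact sequence of pointed sets
$$
1\ra C(G\e)\ra H^{\e 1}(S_{\e\rm{\acute{e}t}},G\e)\ra\Sha^{\le 1}_{S}(K,G\e)\ra 1,
$$
and then identify $\Sha^{\le 1}_{S}(K,G\e)$ with $H^{1}(K,G\e)$. By definition \eqref{shas}, $\Sha^{\le 1}_{S}(K,G\e)$ is the kernel of the localization map $H^{1}(K,G\e)\ra\prod_{v\in S_{0}}H^{1}(K_{v},G\e)$. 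Every $v\in S_{0}$ is non-archimedean, and since the generic fibre of $G$ is semisimple and simply-connected, each $H^{1}(K_{v},G\e)$ is trivial by Kneser's theorem in the number field case and by its positive-characteristic analogue (Bruhat--Tits, Harder) in the function field case; see \cite{pr}. Hence the target of the localization map is trivial, so $\Sha^{\le 1}_{S}(K,G\e)=H^{1}(K,G\e)$, and substitution into the displayed sequence yields (i).

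For part (ii), I would invoke the Hasse principle for simply-connected semisimple groups. First, $H^{1}(K_{v},G\e)$ is trivial at every non-archimedean $v$ (as above) and at every complex $v$ (since then $K_{v}$ is algebraically closed), so only the real primes contribute to the global-to-local map, and the kernel of $H^{1}(K,G\e)\ra\prod_{v\,\rm{real}}H^{1}(K_{v},G\e)$ equals $\Sha^{1}(K,G\e)$. Injectivity of this map is thus exactly the vanishing $\Sha^{1}(K,G\e)=1$, the Hasse principle of Kneser, Harder and Chernousov, while surjectivity is the statement that any prescribed family of local classes at the real primes is realized by a global one; both I would quote from \cite{pr}. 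In the function field case there are no real primes, so the target is a single point and the assertion reduces to $H^{1}(K,G\e)=1$, which follows from Harder's Hasse principle for function fields combined with the local vanishing at all (necessarily non-archimedean) places.

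The hard part will be the input for (ii). The injectivity, i.e.\ the vanishing of $\Sha^{1}(K,G\e)$, is by far the deepest ingredient and is in no way formal; the surjectivity onto the real primes is the least routine remaining point and is where care is needed, although it is quoted from the literature rather than reproved here. A secondary technical issue is the local vanishing $H^{1}(K_{v},G\e)=1$ in positive characteristic, which must be cited in its Bruhat--Tits/Harder form rather than deduced from Kneser's original $p$-adic argument.
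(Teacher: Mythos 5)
Your proposal is correct and follows essentially the same route as the paper: part (i) is obtained by specializing the preceding corollary and identifying $\Sha^{\le 1}_{S}(K,G\e)$ with $H^{\e 1}(K,G\e)$ via the local vanishing $H^{\e 1}(K_{v},G\e)=1$ at nonarchimedean primes (the paper cites \cite{pr}, Theorem 6.4, and \cite{bt}, Theorem 4.7(ii)), and part (ii) is quoted as the known Hasse-principle bijectivity statement (the paper cites \cite{pr}, Theorem 6.6, and \cite{har}, Theorem A). Your extra unpacking of (ii) into injectivity ($\Sha^{1}(K,G\e)=1$) plus surjectivity onto the real local classes, and of the function field case, is just a more explicit rendering of the same citations.
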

\begin{proof} Since $H^{\e 1}(K_{v},G\e)$
is trivial for every nonarchimedean prime $v$ of $K$ (see \cite{pr},
Theorem 6.4, and \cite{bt}, Theorem 4.7(ii)), we have $\Sha^{\le
1}_{S}(K,G\e)=H^{\le 1}(K,G\e)$. Thus assertion (i) is immediate
from the previous corollary. Assertion (ii) is well-known (see
\cite{pr}, Theorem 6.6, p.286, and \cite{har}, Theorem A, p.125).
\end{proof}

Henceforth, $G$ will denote a reductive group scheme over $S$. By
\cite{ga3}, Remark 4.3, there exists a commutative diagram
$$
\xymatrix{H^{\le 1}(S_{\e\rm{\acute{e}t}},G\e)\ar@{->>}[r]
\ar@{->>}[d]^(0.45){{\rm{ab}}^{1}_{\lbe G\lbe/\be S}}& \Sha^{\le
1}_{S}(K,G\e)\ar[d] \\
H^{1}_{\rm{ab}}(S_{ \e\rm{fl}},G\e)\ar[r]& H^{\le
1}_{\rm{ab}}(K_{\e\rm{fl}},G\e),}
$$
where the top arrow is surjective by Corollary 3.6, the left-hand
vertical map is surjective by \cite{ga3}, Proposition 5.5(i), and
the right-hand vertical map (which is the restriction of
${\rm{ab}}^{1}_{\lbe G\lbe/\be K}$ to $\Sha^{\le 1}_{S}(K,G\e)$)
maps $\Sha^{\le 1}_{S}(K,G\e)$ onto $\Sha^{1}_{{\rm{ab}},
S}\e(K,G\e)$ by Lemma 3.3. We conclude that the bottom map in the
above diagram induces a surjection $H^{1}_{\rm{ab}}(S_{
\e\rm{fl}},G\e)\twoheadrightarrow\Sha^{1}_{{\rm{ab}}, S}\e(K,G\e)$.

\begin{definition} Let $G$ be a reductive group scheme over $S$.
The {\it abelian class group of $G$} is the group
$$
C_{\rm{ab}}(G\e)=\krn\!\be\left[H^{1}_{\rm{ab}}(S_{
\e\rm{fl}},G\e)\ra\Sha^{1}_{{\rm{ab}}, S}\e(K,G\e)\right].
$$
\end{definition}

Thus, there exists an exact sequence
\begin{equation}\label{abseq}
1\ra C_{\rm{ab}}(G\e)\ra H^{1}_{\rm{ab}}(S_{
\e\rm{fl}},G\e)\ra\Sha^{1}_{{\rm{ab}}, S}\e(K,G\e) \ra 1.
\end{equation}

\smallskip

\begin{examples} \noindent

\begin{enumerate}

\item[(a)] If $G$ is semisimple with fundamental group $\mu$
then, by Example 2.1(a),
$H^{1}_{\rm{ab}}(S_{\e\rm{fl}},G\e)=H^{2}(S_{ \e\rm{fl}},\mu\e)$ and
$\Sha^{1}_{{\rm{ab}}, S}\e(K,G\e)=\Sha^{2}_{S}(K,\mu)$, where
$$
\Sha^{2}_{S}(K,\mu)=\krn\!\!\left[\e
H^{2}(K_{\rm{fl}},\mu)\ra\displaystyle\prod_{v\in S_{\e 0}}H^{\le
2}(K_{v,{\rm{fl}}},\mu)\right].
$$
Thus there exists an exact sequence of abelian groups
$$
1\ra C_{\rm{ab}}(G\e)\ra H^{2}(S_{
\e\rm{fl}},\mu\e)\ra\Sha^{2}_{S}(K,\mu)\ra 1.
$$
Note that $C_{\rm{ab}}(G\e)$ is annihilated by the exponent of
$\mu$. When $n\geq 2$ is an integer and $\mu=\mu_{n,S}$ is the group
scheme of $n$-th roots of unity on $S$, $C_{\rm{ab}}(G\e)$ can be
computed explicitly. Indeed, taking cohomology of the exact sequence
of fppf sheaves $1\ra \mu_{n}\ra\bg_{m}\overset{\! n}\ra \bg_{m}\ra
1$ over $S$ and over $K$, we obtain an exact commutative diagram
$$
\xymatrix{1\ar[r]&\pic(S)/n\ar[r]\ar[d]&H^{\e
2}\lbe(S_{\e\rm{fl}},\mu_{n})\ar[r]\ar[d]& \br(S)_{n}\ar[r]\ar[d]&1\\
1\ar[r]&1\ar[r]&H^{\e
2}\lbe(K_{\e\rm{fl}},\mu_{n})\ar[r]^(.55){\sim}&
\br(K)_{n}\ar[r]&1.}
$$
The right-hand vertical map in the above diagram is injective by
\cite{adt}, proof of Proposition II.2.1, p.164, line -10, whence
there exists a canonical isomorphism
$$
C_{\rm{ab}}(G\e)=\krn[\e H^{\e 2}\lbe(S_{\e\rm{fl}},\mu_{n})\ra
H^{\e 2}\lbe(K_{\e\rm{fl}},\mu_{n})]\simeq\pic\e(S)/n.
$$

\smallskip

\item[(b)] If $G$ has trivial fundamental group, then $H^{1}_{\rm{ab}}(S_{
\e\rm{fl}},G\e)=H^{1}(S_{ \e\rm{\acute{e}t}},G^{\tor}\e)$ and
$\Sha^{1}_{\rm{ab}, S}(K,G\e)=\Sha^{1}_{S}(K,G^{\tor})$ by Example
2.1(b). Thus, by Corollary 3.6, $C_{\rm{ab}}(G\e)=C(G^{\tor})$. The
latter group is the N\'eron-Raynaud $\Sigma$-class group of the
$K$-torus $G^{\tor}_{\be K}$ introduced in \cite{ga2}, where
$\Sigma$ is the set of primes of $K$ which do not correspond to a
point of $S$. To check this, we only need to show that the $S$-torus
$G^{\tor}$ is the identity component of the N\'eron-Raynaud model
$\s N$ of $G^{\tor}_{\be K}$ over $S$. There exists a unique
$S$-morphism $i\colon G^{\tor}\ra \s N$ which extends the identity
morphism on $G^{\tor}_{\be K}$ (see \cite{blr}, \S10.1, p.289). By
\cite{sga3}, $\text{VI}_{\text{B}}$, Lemma 3.10.1, we only need to
check that $i$ is an open immersion. Let $S^{\e\prime}\ra S$ be a
connected finite \'etale cover of $S$ with function field $L$ such
that $G^{\tor}\times_{S}S^{\e\prime}\simeq\bg_{m,S^{\e\prime}}^{\e
r}$. Then $G^{\tor}\times_{S}S^{\e\prime}\ra\s
N\times_{S}S^{\e\prime}$ is the embedding of
$\bg_{m,S^{\e\prime}}^{\e r}$ into $\s N\times_{S}S^{\e\prime}$,
which is the N\'eron-Raynaud model of $G_{\be K}\times_{\spec
K}\spec L\simeq \bg_{m,\e L}^{\e r}$ over $S^{\e\prime}$ (see
\cite{blr}, \S7.2, Theorem 1(iii), p.176). Thus
$i\times_{S}S^{\e\prime}\colon G^{\tor}\times_{S}S^{\e\prime} \ra\s
N\times_{S}S^{\e\prime}$ is an open immersion and hence so is $i\be$
\footnote{I thank B.Edixhoven for sending me this proof.}.

\end{enumerate}
\end{examples}

Consider the exact commutative diagrams
\begin{equation}\label{abdiag2}
\xymatrix{1\ar[r]&C(G\e)\ar[r]&H^{\e
1}\lbe(S_{\e\rm{\acute{e}t}},G\e)
\ar@{->>}[r]\ar[d]& \Sha^{\le 1}_{S}(K,G\e)\ar[d]\\
&&\displaystyle\prod_{v\le\in\le\Sigma}H^{\le
1}(K_{v},G\e)\ar@{=}[r]&\displaystyle\prod_{v\le\in\le\Sigma}H^{\le
1}(K_{v},G\e)&}
\end{equation}
and
\begin{equation}\label{abdiag3}
\xymatrix{1\ar[r]&C_{\rm{ab}}(G\e)\ar[r]&H^{1}_{\rm{ab}}(S_{
\e\rm{fl}},G\e)\ar@{->>}[r]\ar[d]& \Sha^{1}_{\rm{ab}, S}(K,G\e)
\ar[d]\\
&&\displaystyle\prod_{v\le\in\le\Sigma}H^{\le
1}_{\rm{ab}}(K_{v},G\e)\ar@{=}[r]&\displaystyle\prod_{v\le\in\le\Sigma}H^{\le
1}_{\rm{ab}}(K_{v},G\e)&}
\end{equation}
whose top rows are given by Corollary 3.6 and \eqref{abseq},
respectively. The middle vertical maps in the above diagrams are
induced by the compositions $\spec K_{v}\ra\spec K\ra S$ for
$v\in\Sigma$. Then \eqref{abdiag2} and \eqref{abdiag3} induce exact
sequences of pointed sets
\begin{equation}\label{secu}
1\ra C(G\e)\ra D^{1}\lbe(S,G\e)\ra\Sha^{\le 1}\lbe(K,G\e)\ra 1
\end{equation}
and
\begin{equation}\label{secuab}
1\ra C_{\rm{ab}}(G\e)\ra
D^{1}_{\rm{ab}}\lbe(S,G\e)\ra\Sha^{1}_{\rm{ab}}\lbe(K,G\e) \ra 1,
\end{equation}
where
$$
D^{1}\lbe(S,G\e)=\krn\!\!\left[\e H^{\e
1}\lbe(S_{\e\rm{\acute{e}t}},G\e)
\ra\displaystyle\prod_{v\le\in\le\Sigma}H^{\le 1}(K_{v},G\e)\right]
$$
and
\begin{equation}\label{defs}
D^{1}_{\rm{ab}}\lbe(S,G\e)=\krn\!\!\left[\e H^{\e
1}_{\rm{ab}}\lbe(S_{\e\rm{fl}},G\e)
\ra\displaystyle\prod_{v\le\in\le\Sigma}H^{\le
1}_{\rm{ab}}(K_{v,\rm{fl}},G\e)\right].
\end{equation}
Clearly, the restriction of ${\rm{ab}}^{\lbe 1}_{G/S}\colon H^{\e
1}\lbe(S_{\e\rm{\acute{e}t}},G\e)\ra H^{\e
1}_{\rm{ab}}\lbe(S_{\e\rm{fl}},G\e)$ to $D^{\e 1}\lbe(S,G\e)$
defines a map $D^{\e 1}\lbe(S,G\e)\ra D^{1}_{\rm{ab}}(S,G\e)$.
Further, there exists an exact commutative diagram
$$
\xymatrix{1\ar[r]&C(G\e)\ar[r]&D^{\e 1}\lbe(S,G\e)\ar[r]\ar[d]&
\Sha^{\le 1}\lbe(K,G\e)
\ar[r]\ar[d]^(.5){\simeq}& 1\\
1\ar[r]&C_{\rm{ab}}(G\e)\ar[r]&D^{1}_{\rm{ab}}(S,G\e)\ar[r]&
\Sha^{1}_{\rm{ab}}\lbe(K,G\e) \ar[r]& 1,}
$$
where the right-hand vertical map is induced by ${\rm{ab}}^{\lbe
1}_{G/K}$. That the latter map is bijective is \cite{ga3}, Corollary
5.10. We may now define a map
\begin{equation}\label{map}
C(G\e)\ra C_{\rm{ab}}(G\e)
\end{equation}
to be that induced by the composition $C(G\e)\ra D^{\e
1}\lbe(S,G\e)\ra D^{1}_{\rm{ab}}(S,G\e)$. Note that \eqref{map} is
surjective if, and only if, $C_{\rm{ab}}(G\e)\subset {\rm{ab}}^{\lbe
1}_{G/S}(D^{\e 1}\lbe(S,G\e))$. Now let
$C\big(\Gtil\e\big)^{\prime}$ denote the kernel of the composition
\begin{equation}\label{psi}
H^{\e 1}\be\big(S_{\e\rm{\acute{e}t}},\Gtil\e\big)\overset{\!
\widetilde{\lambda}} \twoheadrightarrow H^{\e
1}\be\big(K,\Gtil\e\big)\overset{\pi}\longrightarrow H^{\e
1}\be\big(K,\Gtil\e\big)/H^{\e 0}_{\rm{ab}}(K_{\rm{fl}},G\e),
\end{equation}
where $\widetilde{\lambda}$ is induced by $\spec K\ra S$ and $\pi$
is the canonical projection. If $\delta_{\le 0}\colon H^{\e
0}_{\rm{ab}}(K_{\rm{fl}},G\e)\ra H^{\e 1}\be\big(K,\Gtil\e\big)$ is
the map appearing in Theorem 2.2(i), then $\krn\pi=\img\delta_{\le
0}$. On the other hand, by Corollary 3.7(i),
$\krn\widetilde{\lambda}$ is in bijection with $C\big(\Gtil\e\big)$.
Thus, by the surjectivity of $\widetilde{\lambda}$, the pair of maps
\eqref{psi} induces an exact sequence of pointed sets
$$
1\ra C\big(\Gtil\e\big)\ra
C\big(\Gtil\e\big)^{\prime}\ra\img\delta_{\le 0}\ra 1,
$$
where the first nontrivial map is injective. Note that, by Corollary
3.7(ii), $\img\delta_{\le 0}$ is in bijection with a subset of
$\prod_{\,v\,\rm{real}}H^{\le 1}(K_{v},G\e)$.

\begin{proposition} There exists an exact sequence of pointed sets
$$
1\ra\mu(S)\ra\Gtil(S)\ra G (S) \ra H^{\e 0}_{\rm{ab}}(S_{\rm{fl}},G
\e)\ra C\big(\Gtil\e\big)^{\prime}\ra C(G\e)\ra C_{\rm{ab}}(G\e),
$$
where $C\big(\Gtil\e\big)^{\prime}$ is the kernel of the composition
\eqref{psi}.
\end{proposition}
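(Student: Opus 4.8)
The plan is to splice the exact sequence of Theorem 2.2(i), taken over $S$, together with the class-set descriptions of Corollaries 3.6 and 3.7(i), using throughout the functoriality of these sequences under the generic-fibre morphism $\spec K\ra S$ (Remark 2.3). Recall that Theorem 2.2(i), applied over $S$, furnishes the exact sequence
$$
1\ra\mu(S)\ra\Gtil(S)\ra G(S)\overset{{\rm{ab}}^{0}}\longrightarrow H^{\e 0}_{\rm{ab}}(S_{\e\rm{fl}},G)\overset{\delta_{\le 0}}\lra H^{\le 1}\big(S_{\e\rm{\acute{e}t}},\Gtil\e\big)\overset{\partial^{\e(1)}}\lra H^{\le 1}(S_{\e\rm{\acute{e}t}},G)\overset{{\rm{ab}}^{1}}\longrightarrow H^{\e 1}_{\rm{ab}}(S_{\e\rm{fl}},G)\ra 1,
$$
while Corollary 3.6 identifies $C(G)$ with the kernel of the restriction map $H^{\le 1}(S_{\e\rm{\acute{e}t}},G)\ra H^{\le 1}(K,G)$ (its image lying in $\Sha^{\le 1}_{S}(K,G)$). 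The first four terms and arrows of the sequence to be proved are literally those of Theorem 2.2(i), so I only need to manufacture the two new arrows $H^{\e 0}_{\rm{ab}}(S_{\e\rm{fl}},G)\ra C\big(\Gtil\e\big)^{\prime}$ and $C\big(\Gtil\e\big)^{\prime}\ra C(G)$, take $C(G)\ra C_{\rm{ab}}(G)$ to be the map \eqref{map} (the restriction of ${\rm{ab}}^{1}_{G/S}$), and then verify exactness at each interior node.

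To build the two new arrows I argue by naturality under $\spec K\ra S$. The square for $\delta_{\le 0}$ reads $\widetilde{\lambda}\circ\delta_{\le 0}=\delta_{\le 0}\circ(\text{restriction})$ on $H^{\e 0}_{\rm{ab}}$; composing with $\pi$ and invoking $\krn\pi=\img\delta_{\le 0}$ over $K$ (as in \eqref{psi}) shows $\img\delta_{\le 0}\subseteq C\big(\Gtil\e\big)^{\prime}$, so $\delta_{\le 0}$ factors through $C\big(\Gtil\e\big)^{\prime}$. For the second arrow, let $q\in C\big(\Gtil\e\big)^{\prime}$; then $\widetilde{\lambda}(q)\in\krn\pi=\img\delta_{\le 0}=\krn\partial^{\e(1)}$ over $K$, the last equality by exactness of Theorem 2.2(i) over $K$, so that the image of $\partial^{\e(1)}(q)$ in $H^{\le 1}(K,G)$ equals $\partial^{\e(1)}\big(\widetilde{\lambda}(q)\big)=*$; by Corollary 3.6 this places $\partial^{\e(1)}(q)$ in $C(G)$, giving the arrow $C\big(\Gtil\e\big)^{\prime}\ra C(G)$.

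Exactness at $\mu(S)$, $\Gtil(S)$, $G(S)$ and $H^{\e 0}_{\rm{ab}}(S_{\e\rm{fl}},G)$ is inherited verbatim from Theorem 2.2(i): since $C\big(\Gtil\e\big)^{\prime}\hookrightarrow H^{\le 1}\big(S_{\e\rm{\acute{e}t}},\Gtil\e\big)$ is an injection of pointed sets, the kernel of $H^{\e 0}_{\rm{ab}}(S_{\e\rm{fl}},G)\ra C\big(\Gtil\e\big)^{\prime}$ is again $\krn\delta_{\le 0}=\img\,{\rm{ab}}^{0}$. For exactness at $C\big(\Gtil\e\big)^{\prime}$, the kernel of $C\big(\Gtil\e\big)^{\prime}\ra C(G)$ is $C\big(\Gtil\e\big)^{\prime}\cap\krn\partial^{\e(1)}=C\big(\Gtil\e\big)^{\prime}\cap\img\delta_{\le 0}=\img\delta_{\le 0}$, the last step because $\img\delta_{\le 0}\subseteq C\big(\Gtil\e\big)^{\prime}$; this is exactly the image of the incoming arrow.

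The decisive point is exactness at $C(G)$. The inclusion $\img\big[\,C\big(\Gtil\e\big)^{\prime}\ra C(G)\,\big]\subseteq\krn\big[\,C(G)\ra C_{\rm{ab}}(G)\,\big]$ is immediate from ${\rm{ab}}^{1}\circ\partial^{\e(1)}=*$ in Theorem 2.2(i). For the reverse inclusion, let $p\in C(G)$ satisfy ${\rm{ab}}^{1}(p)=*$; exactness of Theorem 2.2(i) at $H^{\le 1}(S_{\e\rm{\acute{e}t}},G)$ yields $q\in H^{\le 1}\big(S_{\e\rm{\acute{e}t}},\Gtil\e\big)$ with $\partial^{\e(1)}(q)=p$. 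The main obstacle — and the heart of the argument — is to check that every such lift $q$ already lies in $C\big(\Gtil\e\big)^{\prime}$: since $p\in C(G)$ restricts to the trivial class in $H^{\le 1}(K,G)$, naturality of $\partial^{\e(1)}$ gives $\partial^{\e(1)}\big(\widetilde{\lambda}(q)\big)=*$ over $K$, hence $\widetilde{\lambda}(q)\in\krn\partial^{\e(1)}=\img\delta_{\le 0}=\krn\pi$ over $K$, i.e. $q\in C\big(\Gtil\e\big)^{\prime}$. Therefore $p=\partial^{\e(1)}(q)$ lies in the image of $C\big(\Gtil\e\big)^{\prime}\ra C(G)$. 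Throughout, each ``kernel'' is the preimage of a base point and is seen to coincide with an image already inside the ambient cohomology sets, so no non-abelian twisting is required and exactness as pointed sets follows directly.
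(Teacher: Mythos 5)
Your proposal is correct and is essentially the paper's own proof written out in full: the paper deduces the statement from the commutative diagram whose top row is the sequence of Theorem 2.2(i) over $S$, whose bottom row is the sequence of Theorem 2.2(ii) over $K$, and whose vertical maps are restriction (commutativity by Remark 2.3), identifying $C\big(\Gtil\e\big)^{\prime}$, $C(G\e)$ and $C_{\rm{ab}}(G\e)$ with the kernels of the vertical maps exactly as you do. Your explicit chase (including the key step at $C(G\e)$, where you use $\krn\partial^{\e(1)}_{K}=\img\delta_{\le 0}=\krn\pi$ over $K$ in place of the paper's appeal to the injectivity of $\bar{\partial}^{\e(1)}$ from Theorem 2.2(ii)) is a faithful unpacking of that diagram argument.
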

\begin{proof} This follows from the bijection $C(G)\simeq\krn
[H^{\e 1}\be(S_{\e\rm{\acute{e}t}},G\e)\ra H^{\e 1}(K,G\e)]$ of
Theorem 3.4 and the exact commutative diagram
$$
\xymatrix{\dots\ar[r]&H^{\e 0}_{\rm{ab}}(S_{\rm{fl}},G
\e)\ar[r]\ar[d]&H^{\e
1}\be\big(S_{\e\rm{\acute{e}t}},\Gtil\e\big)\ar[r]
\ar[d]^(.45){\pi\circ\e\tilde{\lambda}}& H^{\e
1}\be(S_{\e\rm{\acute{e}t}},G\e)\ar[r]\ar[d]& H^{\e
1}_{\rm{ab}}(S_{\e\rm{fl}},G\e)\ar[d]\\
&1\ar[r]&H^{\e 1}\be\big(K,\Gtil\e\big)/H^{\e
0}_{\rm{ab}}(K_{\rm{fl}},G\e)\ar[r]& H^{\e 1}(K,G\e)\ar[r]& H^{\e
1}_{\rm{ab}}(K_{\e\rm{fl}},G\e),}
$$
whose top and bottom rows are given by Theorem 2.2(i) and (ii),
respectively. For the commutativity of the above diagram, see Remark
2.3.
\end{proof}

\begin{theorem} Assume that $K$ has no real primes. Then there exists an exact
sequence of pointed sets
$$
1\ra H^{\e 1}\be\big(S_{\e\rm{\acute{e}t}},\Gtil\e\big)/H^{\e
0}_{\rm{ab}}(S_{\rm{fl}},G \e)\ra C(G\e)\ra C_{\rm{ab}}(G\e)\ra 1,
$$
where the first nontrivial map is injective.
\end{theorem}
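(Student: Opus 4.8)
The plan is to run a diagram chase on the exact commutative diagram appearing in the proof of Proposition 3.9, whose top row is the abelianization sequence of Theorem 2.2(i) over $S$ and whose bottom row is its injective counterpart (Theorem 2.2(ii)) over $K$, specialized to the case at hand. The decisive preliminary observation is that $\Gtil$ is semisimple and simply-connected, so Corollary 3.7(ii) applies to it: the localization map $H^{\e 1}\be\big(K,\Gtil\e\big)\ra\prod_{\,v\,\rm{real}}H^{\le 1}(K_{v},\Gtil\e)$ is bijective. Since $K$ has no real primes, the target is a single point, and therefore $H^{\e 1}\be\big(K,\Gtil\e\big)$ is trivial. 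Two consequences follow. First, the term $H^{\e 1}\be\big(K,\Gtil\e\big)/H^{\e 0}_{\rm{ab}}(K_{\rm{fl}},G\e)$ occupying the lower-left corner of that diagram becomes trivial, so that $C\big(\Gtil\e\big)^{\prime}$ is all of $H^{\e 1}\be\big(S_{\e\rm{\acute{e}t}},\Gtil\e\big)$. Second, the bottom row then exhibits ${\rm{ab}}^{\lbe 1}_{G/K}\colon H^{\e 1}(K,G\e)\ra H^{\e 1}_{\rm{ab}}(K_{\e\rm{fl}},G\e)$ as injective; this injectivity is the engine of the whole argument.

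Granting this, recall that $C(G\e)=\krn[H^{\e 1}\be(S_{\e\rm{\acute{e}t}},G\e)\ra H^{\e 1}(K,G\e)]$ (Theorem 3.4, as used in the proof of Proposition 3.9) and $C_{\rm{ab}}(G\e)=\krn[H^{\e 1}_{\rm{ab}}(S_{\e\rm{fl}},G\e)\ra H^{\e 1}_{\rm{ab}}(K_{\e\rm{fl}},G\e)]$ (Definition 3.8). For the injectivity of the first nontrivial map I would pass to the quotient in the top row: by Theorem 2.2(ii) over $S$ the map $\bar{\partial}^{\e(1)}\colon H^{\e 1}\be(S_{\e\rm{\acute{e}t}},\Gtil\e)/H^{\e 0}_{\rm{ab}}(S_{\rm{fl}},G\e)\ra H^{\e 1}\be(S_{\e\rm{\acute{e}t}},G\e)$ is injective, and commutativity of the diagram together with the triviality of the lower-left corner shows that its image lands in $C(G\e)$. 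Hence it restricts to an injection $H^{\e 1}\be(S_{\e\rm{\acute{e}t}},\Gtil\e)/H^{\e 0}_{\rm{ab}}(S_{\rm{fl}},G\e)\into C(G\e)$. Exactness at $C(G\e)$ is then formal: an element of $C(G\e)$ maps to the base point of $C_{\rm{ab}}(G\e)$ exactly when it lies in the kernel of ${\rm{ab}}^{\lbe 1}_{G/S}$, which by Theorem 2.2(ii) is precisely the image of $\bar{\partial}^{\e(1)}$.

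The step carrying the real content is the surjectivity of $C(G\e)\ra C_{\rm{ab}}(G\e)$, and here the no-real-primes hypothesis is used decisively. Given $c\in C_{\rm{ab}}(G\e)\subset H^{\e 1}_{\rm{ab}}(S_{\e\rm{fl}},G\e)$, the surjectivity of ${\rm{ab}}^{\lbe 1}_{G/S}$ (Theorem 2.2(ii)) yields a lift $b\in H^{\e 1}\be(S_{\e\rm{\acute{e}t}},G\e)$ with ${\rm{ab}}^{\lbe 1}_{G/S}(b)=c$. Chasing $b$ into the bottom row, its restriction to $H^{\e 1}(K,G\e)$ maps to the restriction of $c$ in $H^{\e 1}_{\rm{ab}}(K_{\e\rm{fl}},G\e)$, which vanishes because $c\in C_{\rm{ab}}(G\e)$; the injectivity of ${\rm{ab}}^{\lbe 1}_{G/K}$ established above then forces the restriction of $b$ to $H^{\e 1}(K,G\e)$ to be trivial, i.e. $b\in C(G\e)$. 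Thus $c$ lifts to $C(G\e)$, proving surjectivity. I expect this last implication to be the crux: without the triviality of $H^{\e 1}\be\big(K,\Gtil\e\big)$ --- equivalently, without the injectivity of ${\rm{ab}}^{\lbe 1}_{G/K}$ --- one could not conclude that the lift $b$ already dies over $K$, which is exactly the point at which the absence of real primes enters.
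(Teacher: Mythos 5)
Your proof is correct and is, at bottom, the same diagram chase as the paper's: both arguments restrict the abelianization sequence of Theorem 2.2(ii) over $S$ (the top row) to the kernels of the localization maps to the corresponding sequence over $K$ (the bottom row), and both reduce the theorem to the degeneration of that bottom row when $K$ has no real primes. (Incidentally, the diagram you invoke is the one in the proof of Proposition 3.10, not 3.9.) The one genuine difference is how the degeneration is obtained. The paper quotes \cite{ga3}, Theorem 5.8(i) and Remark 5.9(a), which assert outright that ${\rm{ab}}^{1}_{G/K}\colon H^{1}(K,G)\ra H^{1}_{\rm{ab}}(K_{\rm{fl}},G)$ is \emph{bijective} over a global field without real primes. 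You instead derive the vanishing of $H^{1}\big(K,\widetilde{G}\,\big)$ from Corollary 3.7(ii) (i.e.\ from Kneser--Harder) and feed it into Theorem 2.2(ii) over $K$. Your route is more self-contained relative to the paper's stated results and isolates precisely where the no-real-primes hypothesis enters; the paper's citation buys the stronger bijectivity statement in one step.

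One caveat, which does not sink the proof but should be fixed: your claim that triviality of $H^{1}\big(K,\widetilde{G}\,\big)$ ``exhibits ${\rm{ab}}^{1}_{G/K}$ as injective'' is not warranted by the reasoning you give. For an exact sequence of \emph{pointed sets}, triviality of the first term yields only that the fiber of ${\rm{ab}}^{1}_{G/K}$ over the distinguished point is trivial; injectivity of a map of pointed sets does not follow (to get it one needs a twisting argument, as in \cite{ga3}, Corollary 3.15, or one quotes \cite{ga3}, Theorem 5.8(i), as the paper does). Fortunately, nowhere in your argument do you use more than the trivial-fiber statement: in the surjectivity step you only need that ${\rm{ab}}^{1}_{G/K}(b_{K})$ being the base point forces $b_{K}$ to be the base point, and that is exactly exactness at $H^{1}(K,G)$ combined with $H^{1}\big(K,\widetilde{G}\,\big)=1$. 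So the proof stands verbatim once ``injective'' is weakened to ``has trivial kernel''; as written, the stronger assertion is an unjustified (though true) side claim.
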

\begin{proof} By \cite{ga3}, Theorem 5.8(i) and Remark 5.9(a), the
abelianization map ${\rm{ab}}_{G/K}^{1}\colon H^{\e 1}\be(K,G\e)\ra
H^{\e 1}_{\rm{ab}}(K_{\rm{fl}},G\e)$ is bijective. The theorem is
now immediate from the exact commutative diagram
$$
\xymatrix{1\ar[r]& H^{\e
1}\be\big(S_{\e\rm{\acute{e}t}},\Gtil\e\big)/H^{\e
0}_{\rm{ab}}(S_{\e\rm{fl}},G\e)\ar[d]\ar[r]& H^{\e
1}\be(S_{\e\rm{\acute{e}t}},G\e)\ar@{->>}[r]\ar[d]& H^{\e
1}_{\rm{ab}}(S_{\e\rm{fl}},G\e)\ar[d]\\
&1\ar[r]&H^{\e 1}\be(K,G\e) \ar[r]^(.45){\sim}& H^{\e
1}_{\rm{ab}}(K_{\rm{fl}},G\e),}
$$
whose top row is given by Theorem 2.2(ii).
\end{proof}

\begin{remarks} \indent
\begin{enumerate}
\item[(a)] The maps appearing in the exact sequences of Proposition 3.10 and
Theorem 3.11 are induced by the corresponding maps appearing in the
exact sequence of Theorem 2.2(i), all of which are explicitly
described in \cite{ga3}.

\item[(b)] Assume that $K$ has no real primes and that
$\Gtil_{\be K}$ has the strong approximation property with respect
to $\Sigma$ (see \cite{pr}, \S7.1). Then the set $H^{\e
1}\be\big(S_{\e\rm{\acute{e}t}},\Gtil\e\big)\simeq
C\big(\Gtil\e\big)$ (see Corollary 3.7) is trivial and $C(G)$ is
known to have a natural structure of abelian group (see, e.g.,
\cite{th}, Proposition 13, p.32). This group, which was denoted
${\mathcal G}{\rm{cl}}(G)$ in \cite{pr}, \S8, has been studied in
\cite{pr}, \S8.2, \cite{th}, \S\S4.4– 4.5, and \cite{dem2}, \S4. Now
the theorem and a twisting argument (see \cite{ga3}, Corollary 3.15)
show that there exists an isomorphism of abelian groups ${\mathcal
G}{\rm{cl}}(G)\simeq C_{\rm{ab}}(G\e)$. In particular, by Corollary
3.14 below, ${\mathcal G}{\rm{cl}}(G)$ is endowed with natural {\it
corestriction homomorphisms}. For the case of number fields with
real primes, see \cite{dem2}, Theorem 4.6.

\item[(c)] In general, the map $C(G\e)\ra C_{\rm{ab}}(G\e)$ is not
surjective. More precisely, let $c$ be a class in
$C_{\rm{ab}}(G\e)\subset H^{\e 1}_{\rm{ab}}(S_{\e\rm{fl}},G\e)$ and
consider the exact commutative diagram
$$
\xymatrix{H^{\e
1}\be\big(S_{\e\rm{\acute{e}t}},\Gtil\e\big)\ar[r]^{\partial^{(1)}}
\ar@{->>}[d]^(0.45){\tilde{\lambda}}& H^{\e
1}\be(S_{\e\rm{\acute{e}t}},G\e)\ar[r]\ar[d]^(0.45){\lambda}& H^{\e
1}_{\rm{ab}}(S_{\e\rm{fl}},G\e)\ar[d]\ar[r]&1\\
H^{\e 1}\be\big(K,\Gtil\e\big)\ar[r]^{\partial^{(1)}_{K}}&H^{\e
1}\be(K,G\e) \ar[r]& H^{\e 1}_{\rm{ab}}(K_{\rm{fl}},G\e)\ar[r]&1,}
$$
whose left-hand vertical map is surjective by Corollary 3.7(i). Let
$c^{\e\prime}$ be a preimage of $c$ in $H^{\e
1}\be(S_{\e\rm{\acute{e}t}},G\e)$. Since $c$ maps to zero in $H^{\e
1}_{\rm{ab}}(K_{\rm{fl}},G\e)$,
$\lambda(c^{\e\prime})=\partial^{\e(1)}_{K}(x^{\e\prime})$ for some
$x^{\e\prime}\in H^{\e 1}\be\big(K,\Gtil\e\big)$. Let
$p^{\e\prime}\in H^{\e 1}\be\big(S_{\e\rm{\acute{e}t}},\Gtil\e\big)$
be such that $\tilde{\lambda}(p^{\e\prime})=x^{\e\prime}$ and let
$p=\partial^{\e(1)}(p^{\e\prime})$. Choose a $\Gtil$-torsor
$P^{\e\prime}$ representing $p^{\e\prime}$ and let
$P=P^{\e\prime}\be\wedge^{\Gtil} G$, which is a $G$-torsor
representing $p$. Since $\lambda(c^{\e\prime})=\lambda(p)$, Remark
3.5 shows that
$c^{\e\prime}=(\theta_{P}^{-1}\circ\be\!\phantom{.}^{P}\!\varphi)(c^{\e\prime\prime})$
for some class $c^{\e\prime\prime}\in C(\be\!\phantom{.}^{P}\be G)$.
Now, by \cite{ga3}, Lemma 3.13, we have
$$
\big(\!\phantom{.}^{P}\!{\rm{ab}}^{\lbe 1}_{G/\lbe
S}\circ\be\!\phantom{.}^{P}\!\varphi\e\big)(c^{\e\prime\prime})=
\big(\!\phantom{.}^{P}\!{\rm{ab}}^{\lbe 1}_{G/\lbe S}\circ
\theta_{P}\e\big)(c^{\e\prime})={\rm{ab}}^{\lbe 1}_{G/\lbe
S}(c^{\e\prime})=c.
$$
Thus, the following holds. Let $\s S$ be a complete set of
representatives for the classes in $H^{\e
1}\be\big(K,\Gtil\e\big)\simeq\prod_{\,v\,\text{real}}H^{\e
1}\be\big(K_{v},\Gtil\e\big)$. For each $\Gtil_{\lbe\lbe K}$-torsor
$x\in\s S$, choose an extension of $x$ to a $\Gtil$-torsor $X$ and
let $\!\phantom{.}^{x}\be G$ denote the $(X\!\wedge^{\Gtil}\!
G\e)$-twist of $G$. Then there exists a surjection $\coprod_{\e
x\in\s S}\be C(\be\!\phantom{.}^{x}\be G)\twoheadrightarrow
C_{\rm{ab}}(G)$. Consequently, since each set
$C(\be\!\phantom{.}^{x}\be G)$ is in bijection with $C(G)$, we
conclude that there exists a surjection
$$
\coprod_{\e x\in\s S}\be C(G)\twoheadrightarrow C_{\rm{ab}}(G).
$$
In particular, $\mid\! C_{\rm{ab}}(G)\!\mid$ is a lower bound for
$\prod_{\,v\,\text{real}}\#H^{\e 1}\be\big(K_{v},\Gtil\e\big)\cdot
\#C(G)$.

\item[(d)] As mentioned in Remark 2.3, the right action of $H^{\e
0}_{\rm{ab}}(S_{\rm{fl}},G \e)$ on $H^{\e
1}\be\big(S_{\e\rm{\acute{e}t}},\Gtil\e\big)$ is compatible with
inverse images. Thus, via the bijection
$C\big(\Gtil\e\big)\simeq\krn[H^{\e
1}\be\big(S_{\e\rm{\acute{e}t}},\Gtil\e\big)\ra H^{\e
1}\be\big(K,\Gtil\e\big)]$ of Theorem 3.4, it induces a right action
of the abelian group\footnote{The isomorphism follows from
\eqref{kamb}.}
$$\begin{array}{rcl}
C_{\rm{ab}}^{\e 0}(G\e)&=&\krn\!\be\left[H^{0}_{\rm{ab}}(S_{
\e\rm{fl}},G\e)\ra H^{0}_{\rm{ab}}\e(K_{
\e\rm{fl}},G\e)\right]\\
&\simeq&\krn\!\be\left[H^{1}(S_{ \e\rm{fl}},\mu\e)\ra H^{1}(K_{
\e\rm{fl}},\mu\e)\right]
\end{array}
$$
on the set $C\big(\Gtil\e\big)$ . Using \cite{ga3}, Proposition
3.14(a), it can be shown that the stabilizer in $C_{\rm{ab}}^{\e
0}(G\e)$ of a class $p\in C\big(\Gtil\e\big)$ represented by a
$\Gtil$-torsor $P$ is
$$
\!\phantom{.}^{Q}\lbe{\rm{ab}}^{\lbe 0}_{G/\lbe
S}\big(\!\phantom{.}^{Q}\lbe G(S)\cap
\!\phantom{.}^{Q}\be\partial\big(\!\phantom{.}^{Q}\lbe
\Gtil\big(K\big)\big)\big)\subset C_{\rm{ab}}^{\e 0}(G\e),
$$
where $Q=P\be\wedge^{\Gtil}\be G$, $\!\phantom{.}^{Q}\lbe\partial$
is the $Q$-twist of $\partial\colon \Gtil\ra G$ and the intersection
takes place inside $\!\phantom{.}^{Q}\lbe G(K)$. In the interesting
particular case where $\mu=\mu_{n,S}$ is the group scheme of $n$-th
roots of unity on $S$ ($n\geq 2$), the cohomological argument given
in Example 3.9(a) yields an injection $C_{\rm{ab}}^{\e
0}(G\e)\hookrightarrow\text{Pic}(S)_{n}$.

\end{enumerate}

\end{remarks}

\smallskip

In the remainder of this Section we establish some basic properties
of $C_{\rm{ab}}(G\e)$ using flasque resolutions of $G$. Recall that
a flasque resolution of $G$ is an exact sequence $1\ra F\ra H \ra G
\ra 1$, where $F$ is a flasque $S$-torus, $R=H^{\tor}$ is a
quasi-trivial $S$-torus and $H^{\der}$ is a semisimple and
simply-connected $S$-group scheme. See \cite{ga4} for more details.

\smallskip

For any torus $T$ over $S$, set
$$
D^{\le 2}\lbe(S,T\e)=\krn\!\!\left[\e H^{\e
2}\lbe(S_{\e\rm{\acute{e}t}},T\e)
\ra\displaystyle\prod_{v\e\in\e\Sigma}H^{\le 2}(K_{v},T\e)\right],
$$
where the map involved is induced by the compositions $\spec
K_{v}\ra\spec K\ra S$ for $v\e\in\e\Sigma$. If $v\in S_{0}$, the
preceding composition coincides with the composition $\spec
K_{v}\ra\spec\mathcal O_{v}\ra S$. Since $H^{\e 2}\lbe(\mathcal
O_{v,\e\rm{\acute{e}t}},T\e)=H^{\e 2}\lbe(k(v),T\e)=0$ by
\cite{mi1}, III.3.11(a), p.116 (recall that $k(v)$-tori are
cohomologically trivial), we conclude that the map $H^{\e
2}\lbe(S_{\e\rm{\acute{e}t}},T\e)\ra H^{\e 2}\lbe(K,T\e)$ induces a
map $D^{2}\lbe(S,T\e)\ra\Sha^{2}(K,T)$.

\begin{theorem} Let $G$ be a reductive group scheme over $S$ and let
$1\ra F\ra H \ra G \ra 1$ be a flasque resolution of $G$. Set
$R=H^{\lbe\tor}$. Then the given resolution induces an exact
sequence of abelian groups
$$
\mu(S)\hookrightarrow F(S)\ra R(S)\ra
H^{0}_{\rm{ab}}(S_{\rm{fl}},G\le)\ra H^{\le
1}(S_{\e\rm{\acute{e}t}},F\e)\ra C(R)\ra C_{\rm{ab}}(G\e)\ra 1.
$$
\end{theorem}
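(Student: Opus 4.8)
The plan is to realise the displayed sequence as a low-degree identification and truncation of the long exact hypercohomology sequence attached to a short exact sequence of two-term complexes extracted from the flasque resolution. First I would record the shape of the resolution. Since $H^{\der}$ is semisimple and simply-connected it is its own simply-connected central cover, and since $H\to G$ is central with kernel the subtorus $F$, the induced map $H^{\der}\to G^{\der}$ is precisely the simply-connected central cover of $G^{\der}$; thus $\Gtil=H^{\der}$ serves as the simply-connected cover for both $H$ and $G$, and $H^{\der}\cap F=\mu$. Moreover $F\subset Z(H)$ and $Z(G)=Z(H)/F$, so the two abelianization complexes fit into a short exact sequence of complexes of fppf sheaves on $S$
\[
0\to F[0]\to\big(Z(\Gtil)\overset{\partial_{Z}}{\to}Z(H)\big)\to\big(Z(\Gtil)\overset{\partial_{Z}}{\to}Z(G)\big)\to 0,
\]
where $F[0]$ is $F$ placed in degree $0$, the left map is the identity on $Z(\Gtil)$ together with $F\hookrightarrow Z(H)$, and the right map is $Z(H)\twoheadrightarrow Z(G)$.

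Next I would pass to flat hypercohomology. Because $F$ is smooth, $\bh^{i}(S_{\rm{fl}},F[0])=H^{i}(S_{\e\rm{\acute{e}t}},F)$; because $H$ has trivial fundamental group, Example 2.1(b) gives $\bh^{i}\big(S_{\rm{fl}},Z(\Gtil)\to Z(H)\big)=H^{i}(S_{\e\rm{\acute{e}t}},R)$; and the third complex computes $H^{i}_{\rm{ab}}(S_{\rm{fl}},G)$ by definition. The long exact sequence therefore reads
\[
\cdots\to H^{i}(S_{\e\rm{\acute{e}t}},F)\to H^{i}(S_{\e\rm{\acute{e}t}},R)\to H^{i}_{\rm{ab}}(S_{\rm{fl}},G)\overset{\partial}{\to}H^{i+1}(S_{\e\rm{\acute{e}t}},F)\to\cdots .
\]
In the lowest degrees, $\bh^{-1}$ of the $H$-complex vanishes while $\bh^{-1}$ of the $G$-complex equals $\krn[Z(\Gtil)(S)\to Z(G)(S)]=\mu(S)$ by \eqref{seq4}; this produces the injection $\mu(S)\hookrightarrow F(S)$ (the connecting map, which is the one induced by $\mu=H^{\der}\cap F\hookrightarrow F$) and exhibits the beginning of the sequence through $F(S)\to R(S)\to H^{0}_{\rm{ab}}(S_{\rm{fl}},G)\to H^{1}(S_{\e\rm{\acute{e}t}},F)\to H^{1}(S_{\e\rm{\acute{e}t}},R)$.

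It remains to identify the last two nonzero terms as $C(R)$ and $C_{\rm{ab}}(G)$ and to show the sequence terminates. Since $R$ is quasi-trivial, $H^{1}(K,R)=0$, whence $C(R)=\krn[H^{1}(S_{\e\rm{\acute{e}t}},R)\to H^{1}(K,R)]=H^{1}(S_{\e\rm{\acute{e}t}},R)$; likewise $C_{\rm{ab}}(G)=\krn[H^{1}_{\rm{ab}}(S_{\rm{fl}},G)\to H^{1}_{\rm{ab}}(K_{\rm{fl}},G)]$. Base-changing the short exact sequence of complexes along $\spec K\to S$ yields the analogous $K$-sequence, and functoriality provides a commutative ladder between the two. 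As $H^{1}(K,R)=0$, the edge map $\alpha\colon C(R)=H^{1}(S_{\e\rm{\acute{e}t}},R)\to H^{1}_{\rm{ab}}(S_{\rm{fl}},G)$ has image killed in $H^{1}_{\rm{ab}}(K_{\rm{fl}},G)$, hence lands in $C_{\rm{ab}}(G)$; and $\krn\alpha=\img[H^{1}(S_{\e\rm{\acute{e}t}},F)\to C(R)]$ directly from the long exact sequence, giving exactness at $C(R)$.

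The one step that is not formal is the surjectivity of $\alpha$ onto $C_{\rm{ab}}(G)$, and this is where I expect the main work. Given $c\in C_{\rm{ab}}(G)$, its image in $H^{1}_{\rm{ab}}(K_{\rm{fl}},G)$ is zero, so by the commuting square relating $\partial$ and $\partial_{K}$ the class $\partial c$ dies in $H^{2}(K,F)$; hence it suffices to prove the injectivity $H^{2}(S_{\e\rm{\acute{e}t}},F)\hookrightarrow H^{2}(K,F)$. By the localization sequence this kernel is the image of $\bigoplus_{v\in S_{0}}H^{2}_{v}(S,F)$, and for each $v$ the excision sequence places $H^{2}_{v}(\ov,F)$ between $H^{1}(K_{v},F)$ and $H^{2}(\ov,F)=H^{2}(k(v),F)=0$. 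Since $F$ is flasque, $H^{1}(K_{v},F)=0$ at every (nonarchimedean) $v\in S_{0}$ by Tate--Nakayama, forcing $H^{2}_{v}(\ov,F)=0$ and hence the desired injectivity. Therefore $\partial c=0$, so $c\in\img\alpha$, and since $\alpha$ factors through $C(R)\to C_{\rm{ab}}(G)$ this last map is surjective, completing the sequence. The crux is precisely this injectivity, for which the flasqueness of $F$ — through the vanishing $H^{1}(K_{v},F)=0$ — is indispensable.
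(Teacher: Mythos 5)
Your proof is correct, and on the one step that requires real work it takes a genuinely different route from the paper's. The two arguments coincide in the first half: the paper also obtains $C(R)=H^{1}(S_{\rm{\acute{e}t}},R)$ from Nisnevich's theorem (Theorem 3.4) plus Hilbert 90, and it gets exactness up to $H^{1}(S_{\rm{\acute{e}t}},F)$ by citing the long exact sequence of [GA4], Proposition 4.2 --- which is exactly the sequence you reconstruct from the short exact sequence of complexes $0\to F[0]\to\big(Z(\Gtil)\to Z(H)\big)\to\big(Z(\Gtil)\to Z(G)\big)\to 0$. The divergence is in the tail $H^{1}(S_{\rm{\acute{e}t}},F)\to C(R)\to C_{\rm{ab}}(G)\to 1$. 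The paper proves this by a diagram chase through the local--global kernels $D^{1}_{\rm{ab}}(S,G)$ and $D^{2}(S,F)$, which needs three extra inputs: the injectivity of $H^{2}(S_{\rm{\acute{e}t}},R)\to\prod_{v\in\Sigma}H^{2}(K_{v},R)$ (Milne, ADT, Prop.\ II.2.1), the injectivity of $\prod_{v\in\Sigma}H^{1}_{\rm{ab}}(K_{v},G)\to\prod_{v\in\Sigma}H^{2}(K_{v},F)$, and the isomorphism $\Sha^{1}_{\rm{ab}}(K,G)\simeq\Sha^{2}(K,F)$ of Proposition 3.1 (which rests on $\Sha^{2}(L,\mathbb{G}_{m})=0$); global class field theory thus enters twice. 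You bypass all of this by working directly with $C_{\rm{ab}}(G)=\mathrm{Ker}\big[H^{1}_{\rm{ab}}(S_{\rm{fl}},G)\to H^{1}_{\rm{ab}}(K_{\rm{fl}},G)\big]$ --- a legitimate substitution, since this is the Introduction's definition and agrees with Definition 3.8 because the restriction map factors through $\Sha^{1}_{{\rm{ab}},S}(K,G)$ --- so that the only nonformal ingredients are $H^{1}(K,R)=0$ and the injectivity of $H^{2}(S_{\rm{\acute{e}t}},F)\to H^{2}(K,F)$. That injectivity is the same key lemma the paper relies on, except that the paper cites it (CTS2, Theorem 2.2(ii)) while you re-prove it by localization, essentially the Colliot-Th\'el\`ene--Sansuc argument specialized to finite residue fields. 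Your route is shorter and free of class field theory; the paper's route has the side benefit of producing the exact sequence $H^{1}(S_{\rm{\acute{e}t}},F)\to C(R)\to D^{1}_{\rm{ab}}(S,G)\to D^{2}(S,F)\to 1$ and the injection $D^{2}(S,F)\hookrightarrow\Sha^{2}(K,F)$, which are reused in the proof of Proposition 4.9.

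One step of yours is under-justified and, as phrased, misleading: ``Since $F$ is flasque, $H^{1}(K_{v},F)=0$ at every (nonarchimedean) $v\in S_{0}$ by Tate--Nakayama.'' As a statement about flasque tori over local fields this is false in general: flasqueness says $\widehat{H}^{-1}(\mathfrak{h},F^{*})=0$ for all subgroups $\mathfrak{h}$ of the splitting group (not $H^{1}(\mathfrak{h},F^{*})=0$, which is coflasqueness), while local duality gives $H^{1}(K_{v},F)\simeq H^{1}(\varGamma_{v},F^{*})^{D}$; indeed $H^{1}(K_{v},\cdot)$ of a flasque torus computes $R$-equivalence, and there are standard examples (e.g.\ flasque resolutions of norm-one tori of biquadratic extensions) where it is nonzero over $p$-adic fields. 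What rescues your claim is that $F$ is an $S$-torus, hence has good reduction at every $v\in S_{0}$: it is split by a finite \emph{unramified} extension $L_{w}/K_{v}$, so $\varGamma_{v}$ acts on $F^{*}$ through a finite \emph{cyclic} group $G$, and for cyclic groups $\widehat{H}^{1}\simeq\widehat{H}^{-1}$, whence flasqueness does force $H^{1}(\varGamma_{v},F^{*})=0$. (Equivalently: $L_{w}^{\times}\simeq{\mathcal O}_{w}^{\times}\times\mathbb{Z}$ as $G$-modules, so $H^{1}(K_{v},F)\simeq H^{1}({\mathcal O}_{v},F)\oplus H^{1}\big(G,\mathrm{Hom}(F^{*},\mathbb{Z})\big)$, the first summand vanishing by Lang's theorem and the second by flasqueness.) So the vanishing you need is true, but good reduction and the finiteness of $k(v)$ --- not flasqueness alone --- are essential to it; your write-up should make this explicit, since without good reduction the step fails.
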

\begin{proof} Since $R$ is quasi-trivial, Theorem 3.4 and Hilbert's
Theorem 90 show that $C(R)=H^{\le 1}\lbe(S_{\e\rm{\acute{e}t}},R)$.
Thus, since $H^{-1}_{\rm{ab}}(S_{\rm{fl}},G\le)=\mu(S)$ by
\cite{ga3}, (2.1), the sequence of the theorem is exact up to the
term $H^{\le 1}(S_{\e\rm{\acute{e}t}},F\e)$ by \cite{ga4},
Proposition 4.2. Thus, it remains only to check the exactness of the
sequence $H^{\le 1}(S_{\e\rm{\acute{e}t}},F\e)\ra C(R)\ra
C_{\rm{ab}}(G\e)\ra 1$. By \cite{ga4}, Propositions 4.2 and 4.5,
there exists an exact commutative diagram
$$
\xymatrix{\dots\ar[r]&C(R)\ar[r]& H^{\e
1}_{\rm{ab}}(S_{\e\rm{fl}},G\e)\ar[r]\ar[d]& H^{\e
2}\lbe(S_{\e\rm{\acute{e}t}},F\e)\ar[r]\ar[d]& H^{\e
2}\lbe(S_{\e\rm{\acute{e}t}},R\e)\ar[d]\\
&&\displaystyle\prod_{v\le\in\le\Sigma}H^{\e
1}_{\rm{ab}}\be(K_{v},G\e)\ar@{^{(}->}[r]&
\displaystyle\prod_{v\le\in\le\Sigma}H^{\e 2}(K_{v},F\e)\ar[r]&
\displaystyle\prod_{v\le\in\le\Sigma}H^{\e 2}(K_{v},R\e).}
$$
The right-hand vertical map in the above diagram is injective by
\cite{adt}, Proposition II.2.1, p.163. Thus the preceding diagram
yields an exact sequence
$$
H^{\le 1}(S_{\e\rm{\acute{e}t}},F\e)\ra C(R)\ra
D^{1}_{\rm{ab}}(S,G\e)\ra D^{2}\lbe(S,F\e)\ra 1.
$$
Now, since $F$ is flasque, the map $H^{\e
2}\lbe(S_{\e\rm{\acute{e}t}},F\e)\ra H^{\e 2}\lbe(K,F\e)$ is
injective (see \cite{cts2}, Theorem 2.2(ii), p.161), whence
$D^{2}\lbe(S,F\e)\ra\Sha^{2}(K,F)$ is injective as well. The theorem
now follows from the diagram
$$
\xymatrix{H^{\le 1}(S_{\e\rm{\acute{e}t}},F\e)\ar[r]& C(R)\ar[r]&
D^{1}_{\rm{ab}}(S,G\e)\ar[r]\ar[d]&D^{2}\lbe(S,F\e)\ar@{^{(}->}[d]
\ar[r]&1\\
&&\Sha^{1}_{\rm{ab}}\lbe(K,G\e)\ar[r]^{\simeq}&\Sha^{2}(K,F),&}
$$
where the bottom map is an isomorphism by Proposition 3.1.
\end{proof}

\begin{corollary} Let $L/K$
be a finite Galois extension and let $S^{\e\prime}\ra S$ be the
normalization of $S$ in $L$. Let $G$ be a reductive group scheme
over $S$ and let $1\ra F\ra H \ra G \ra 1$ be a flasque resolution
of $G$. Then the given resolution defines a {\rm corestriction
homomorphism}
$$
{\rm{cores}}_{ S^{\prime}\be/S}\colon
C_{\rm{ab}}(S^{\e\prime},G\e)\ra C_{\rm{ab}}(S,G\e),
$$
where $C_{\rm{ab}}(S^{\e\prime},G\e)$ (respectively,
$C_{\rm{ab}}(S,G\e)$) is the abelian class group of
$G\times_{S}S^{\e\prime}$ (respectively, $G$).
\end{corollary}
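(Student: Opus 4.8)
The plan is to construct the corestriction homomorphism by transporting the problem from abelian class groups to the $\Sha^2$-groups of flasque tori via Theorem 3.13, where an honest corestriction map is available, and then transporting back. The key structural input is Theorem 3.13, which gives a functorial identification of $C_{\rm{ab}}(G)$ as the cokernel of the map $C(R)\ra C_{\rm{ab}}(G)$ sitting at the end of a long exact sequence. But a cleaner route — and the one I would pursue — is to observe that Proposition 3.1 together with the exact sequence \eqref{secuab} identifies $C_{\rm{ab}}(G)$ with data built from $\Sha^{2}(K,F)$ and $D^{2}(S,F)$, quantities attached to the flasque torus $F$, for which corestriction is classical.

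**First I would** fix notation: write $F^{\e\prime}, H^{\e\prime}, R^{\e\prime}$ for the base changes of $F, H, R$ to $S^{\e\prime}$, so that $1\ra F^{\e\prime}\ra H^{\e\prime}\ra G\times_{S}S^{\e\prime}\ra 1$ is a flasque resolution of $G\times_{S}S^{\e\prime}$. Applying Theorem 3.13 over both $S$ and $S^{\e\prime}$ yields two exact sequences; their final segments read
$$
H^{\le 1}(S^{\e\prime}_{\rm{\acute{e}t}},F^{\e\prime})\ra C(R^{\e\prime})\ra C_{\rm{ab}}(S^{\e\prime},G)\ra 1
$$
and
$$
H^{\le 1}(S_{\rm{\acute{e}t}},F)\ra C(R)\ra C_{\rm{ab}}(S,G)\ra 1.
$$
Since $R$ is quasi-trivial, $C(R)=H^{\le 1}(S_{\rm{\acute{e}t}},R)$, and likewise for $R^{\e\prime}$. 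The point is that a finite morphism $S^{\e\prime}\ra S$ induces corestriction (norm) maps on the cohomology of any $S$-torus, in particular on $H^{\le 1}(S_{\rm{\acute{e}t}},R^{\e\prime})\ra H^{\le 1}(S_{\rm{\acute{e}t}},R)=C(R)$ and on $H^{\le 1}(S^{\e\prime}_{\rm{\acute{e}t}},F^{\e\prime})\ra H^{\le 1}(S_{\rm{\acute{e}t}},F)$.

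**Then I would** assemble these corestriction maps into a morphism of the two exact sequences. The required commutativity — that corestriction is compatible with the connecting maps of Theorem 3.13 — follows because the entire exact sequence of Theorem 3.13 arises from \cite{ga4}, Propositions 4.2 and 4.5, which are functorial in the torus data, and corestriction for tori over a finite base morphism commutes with the long exact cohomology sequences attached to the flasque resolution. Granting this, corestriction on $C(R^{\e\prime})$ descends to the cokernel, producing the desired map ${\rm{cores}}_{S^{\prime}/S}\colon C_{\rm{ab}}(S^{\e\prime},G)\ra C_{\rm{ab}}(S,G)$.

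**The hard part will be** establishing the compatibility of corestriction with the connecting maps in a genuinely base-changed setting, rather than over a fixed base. Over a single base, corestriction is defined via the trace/transfer of the finite morphism $S^{\e\prime}\ra S$ on étale (or flat) cohomology of sheaves; here one must verify that this transfer intertwines the maps $C(R^{\e\prime})\ra C_{\rm{ab}}(S^{\e\prime},G)$ and $C(R)\ra C_{\rm{ab}}(S,G)$ coming from the two resolutions over the two bases. I expect this to reduce, via the identification of $C_{\rm{ab}}(G)$ with the relevant $\Sha^{2}$- and $D^{2}$-data of $F$ (Proposition 3.1 and the diagrams in the proof of Theorem 3.13), to the classical statement that corestriction commutes with restriction-to-local-fields and with the Shapiro-type isomorphisms governing flasque tori — which is exactly where the flasqueness hypothesis ($H^{\le 2}(S_{\rm{\acute{e}t}},F)\into H^{\le 2}(K,F)$) is used to control the kernels. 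Finally, I would note that independence of the chosen flasque resolution, and functoriality in $S^{\e\prime}\ra S$, follow from the corresponding uniqueness statements for flasque resolutions in \cite{ga4}, as recorded in Remark 3.15.
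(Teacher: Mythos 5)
Your proposal is correct and takes essentially the same route as the paper: base-change the flasque resolution to $S^{\e\prime}$ (via \cite{cts2}, Proposition 1.4), apply Theorem 3.13 over both $S$ and $S^{\e\prime}$, equip the terms $H^{1}(\cdot_{\e\rm{\acute{e}t}},F\e)$ and $C(\cdot,R)=H^{1}(\cdot_{\e\rm{\acute{e}t}},R\e)$ with the trace-morphism corestrictions of \cite{cts2}, (0.4.1), and pass to the induced map on cokernels. The commutativity you flag as the hard part is exactly what the paper's diagram asserts, and it holds for the standard reason you indicate: the maps $H^{1}(F\e)\ra H^{1}(R\e)$ over the two bases are induced by the torus morphism $F\ra R$, and the trace/corestriction construction is functorial in the sheaf, so it intertwines them.
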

\begin{proof} Let $R=H^{\tor}$ and recall that $C(S,R)=H^{\le
1}(S_{\e\rm{\acute{e}t}},R\e)$ (and similarly for
$R\times_{S}S^{\e\prime}\e$). There exists a canonical corestriction
homomorphism
$$
{\rm{cores}}_{S^{\prime}/S}\colon H^{\le 1}(S^{\e\prime}
_{\e\rm{\acute{e}t}},F\e)\ra H^{\le 1}(S_{\e\rm{\acute{e}t}},F\e),
$$
namely the composite
$$
H^{\le i}(S^{\e\prime}_{\e\rm{\acute{e}t}},F\e)\overset{\sim} \ra
H^{\le i}\big(S_{\e\rm{\acute{e}t}},R_{S^{\prime}/S}\big(F\times_{S}
S^{\e\prime}\e\big)\e\big)\ra H^{\le i}(S_{\e\rm{\acute{e}t}},F\e),
$$
where the second map is induced by the trace morphism $R
_{S^{\prime}/S}\big(F\times_{S} S^{\e\prime}\e\big)\ra F$ of
\cite{sga4}, XVII, 6.3.13.2. See \cite{cts2}, (0.4.1), p.154.
Similarly, there exists a corestriction homomorphism
${\rm{cores}}_{S^{\prime}/S}\colon C(S^{\e\prime},R)\ra C(S,R)$.
Now, by \cite{cts2}, Proposition 1.4, p.158, $1\ra
F\times_{S}S^{\e\prime}\ra H\times_{S}S^{\e\prime} \ra
G\times_{S}S^{\e\prime} \ra 1$ is a flasque resolution of
$G\times_{S}\e S^{\e\prime}$, and the theorem yields an exact
commutative diagram
$$
\xymatrix{H^{\le
1}(S^{\e\prime}_{\e\rm{\acute{e}t}},F\e)\ar[r]\ar[d]^{{\rm{cores}}_{
S^{\prime}\be/S}}&C(S^{\e\prime},R)\ar[d]^{{\rm{cores}}_{
S^{\e\prime}\be/S}}\ar[r]&
C_{\rm{ab}}(S^{\e\prime},G\e)\ar[r]\ar@{-->}[d]& 1\\
H^{\le 1}(S_{\e\rm{\acute{e}t}},F\e)\ar[r]&C(S,R)\ar[r]&
C_{\rm{ab}}(S,G\e)\ar[r]& 1.}
$$
This establishes the existence of the right-hand vertical map in the
above diagram, which is the assertion of the corollary.
\end{proof}

\begin{remark} The map of the corollary is independent, up to
isomorphism, of the chosen flasque resolution of $G$, i.e., if $1\ra
F_{1}\ra H_{1} \ra G \ra 1$ is another flasque resolution of $G$ and
${\rm{cores}}_{ S^{\prime}\be/S,1}\colon
C_{\rm{ab}}(S^{\e\prime},G\e)\ra C_{\rm{ab}}(S,G\e)$ is the
corestriction map that it defines, then \cite{ga4}, Proposition
3.7(iii), shows that there exist automorphisms $\sigma^{\e\prime}$
of $C_{\rm{ab}}(S^{\e\prime},G\e)$, $\sigma$ of $C_{\rm{ab}}(S,G\e)$
and a commutative diagram
$$
\xymatrix{C_{\rm{ab}}(S^{\e\prime},G\e)\ar[d]^{{\rm{cores}}
_{S^{\prime}/S}}\ar[r]^{\sigma^{\e\prime}}&C_{\rm{ab}}(S^{\e\prime},G\e)
\ar[d]^{{\rm{cores}}_{S^{\prime}/S,1}}\\
C_{\rm{ab}}(S,G\e)\ar[r]^{\sigma}&C_{\rm{ab}}(S,G\e).}
$$
Further, ${\rm{cores}} _{S^{\prime}/S}$ is functorial in
$S^{\prime}\ra S$. This follows from the functoriality of the
corestriction homomorphisms in the case of tori mentioned above.
\end{remark}

Recall that a flasque $S$-torus $F$ is called {\it invertible} if it
is a direct factor of a quasi-trivial $S$-torus.

\begin{corollary} Assume that $G$ admits an {\rm{invertible}}
resolution, i.e., there exists a flasque resolution $1\ra F\ra H \ra
G \ra 1$ with $F$ invertible\,\footnote{This is the case if both
$G^{\tor}$ and $Z\big(\Gtil\e\big)$ are split by a metacyclic Galois
cover of $S$. See \cite{ga4}, Remark 3.3.}. Then the given
resolution induces an exact sequence of abelian groups
$$
1\ra\mu(S)\ra F(S)\ra R(S)\ra H^{0}_{\rm{ab}}(S_{\rm{fl}},G\le)\ra
C(F)\ra C(R) \ra C_{\rm{ab}}(G\e)\ra 1,
$$
where $R=H^{\tor}$.
\end{corollary}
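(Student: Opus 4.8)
The plan is to deduce the stated sequence directly from the exact sequence of Theorem 3.13 by means of a single identification: when $F$ is invertible, the term $H^{\le 1}(S_{\e\rm{\acute{e}t}},F\e)$ occurring there is canonically the class group $C(F)$. Thus the substantive content reduces to producing a natural isomorphism $C(F)\isoto H^{\le 1}(S_{\e\rm{\acute{e}t}},F\e)$ and to verifying that, under it, the map $H^{\le 1}(S_{\e\rm{\acute{e}t}},F\e)\ra C(R)$ of Theorem 3.13 becomes the map on class groups induced by the composite $F\into H\onto H^{\tor}=R$.

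First I would observe that invertibility forces $H^{\le 1}(K,F\e)=0$. By definition $F$ is a direct factor of a quasi-trivial $S$-torus $P$, hence $H^{\le 1}(K,F\e)$ is a direct summand of $H^{\le 1}(K,P\e)$; the latter vanishes by Shapiro's lemma and Hilbert's Theorem 90, since $P_{\be K}$ is a finite product of Weil restrictions of $\bg_{m}$ along finite separable extensions of $K$. In particular $\Sha^{\le 1}_{S}(K,F\e)$, being by definition a subgroup of $H^{\le 1}(K,F\e)$, is trivial.

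Next I would apply Corollary 3.6 to the torus $F$, which is a commutative reductive $S$-group scheme so that the corollary indeed applies, to obtain the exact sequence
$$
1\ra C(F)\ra H^{\le 1}(S_{\e\rm{\acute{e}t}},F\e)\ra\Sha^{\le 1}_{S}(K,F\e)\ra 1.
$$
Combined with the vanishing of the last term just established, this yields the desired isomorphism $C(F)\isoto H^{\le 1}(S_{\e\rm{\acute{e}t}},F\e)$, realized by the injection $\varphi$ of Theorem 3.4.

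Finally I would splice this identification into the sequence of Theorem 3.13, leaving the remaining terms unchanged; the leading $1$ in the assertion merely records the injectivity of $\mu(S)\into F(S)$ already present in Theorem 3.13, so nothing new is needed there. The only point I expect to require genuine (if minor) attention is the compatibility of the connecting maps: one must check that the inclusion $C(\,\cdot\,)\into H^{\le 1}(S_{\e\rm{\acute{e}t}},\,\cdot\,)$ of Corollary 3.6 is functorial in the torus, so that the square relating $C(F)\ra C(R)$ to $H^{\le 1}(S_{\e\rm{\acute{e}t}},F\e)\ra H^{\le 1}(S_{\e\rm{\acute{e}t}},R\e)=C(R)$ commutes. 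This in turn follows from the naturality of the Nisnevich bijection $C(\,\cdot\,)\simeq H^{\le 1}(S_{\rm{Nis}},u_{*}(\,\cdot\,))$ and of the map $\varphi$ of Theorem 3.4, both of which are induced by morphisms of the underlying sheaves.
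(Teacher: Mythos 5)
Your proposal is correct and follows essentially the same route as the paper's own proof: invertibility gives $H^{1}(K,F)=0$ via Hilbert's Theorem 90, whence $C(F)=H^{1}(S_{\rm{\acute{e}t}},F)$ by Theorem 3.4 (you route this through Corollary 3.6, which is the same statement specialized to reductive group schemes), and the sequence is then spliced into Theorem 3.13. Your additional remarks on Shapiro's lemma and on the functoriality of the identification $C(\,\cdot\,)\simeq\krn[H^{1}(S_{\rm{\acute{e}t}},\,\cdot\,)\to H^{1}(K,\,\cdot\,)]$ merely make explicit what the paper leaves implicit.
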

\begin{proof} Since $F$ is
invertible, $H^{1}(K,F)=0$ by Hilbert's Theorem 90. Thus $H^{\le
1}(S_{\e\rm{\acute{e}t}},F\e)=C(F)$ by Theorem 3.4 and the corollary
is now immediate from the theorem.
\end{proof}

\begin{remark} The corollary extends the ``good reduction" case of
\cite{ga2}, Theorem 6.1, from $K$-tori to arbitrary connected
reductive $K$-groups. Indeed, let $T$ be a $K$-torus with
multiplicative reduction over $S$ which admits an invertible
resolution $1\ra F\ra R \ra T \ra 1$, i.e., $F$ is invertible and
$R$ is quasi-trivial (this is the case, for example, if $T$ is split
by a metacyclic extension of $K$, by \cite{cts1}, Proposition 2,
p.184). Let $j\colon\spec K\ra S$ be the canonical morphism. Since
$R^{1}j_{*}F=0$ for the smooth topology on $S$ (see \cite{ga2},
Lemma 4.8(b)), the given resolution induces an exact sequence
$1\ra\s F\overset{\iota}\ra\s R \ra\s T \ra 1$ of N\'eron-Raynaud
models over $S$. The latter sequence induces, in turn, an exact
sequence $1\ra\iota^{-1}(\s F^{\circ})\ra \s R^{\e\circ}\ra\s
T^{\circ}\ra 1$, where $\s T^{\circ}$ (respectively, $\s
R^{\e\circ},\s F^{\circ}$) denotes the identity component of $\s T$
(respectively, $\s R,\s F$). Further, $\iota^{-1}(\s F^{\circ})/\s
F^{\circ}\simeq\krn[\e\Phi(\s F\e)\ra\Phi(\s R\e)]$, where $\Phi(\s
F\e)=\s F/\s F^{\circ}$ and $\Phi(\s R\e)=\s R/\s R^{\e\circ}$. See
\cite{b}, Theorems 2.2.4 and 2.3.1, pp.49-50. On the other hand, by
\cite{b}, Theorem 2.3.4, p.52, $\krn[\e\Phi(\s F\e)\ra\Phi(\s
R\e)]\subset\Phi(\s F)_{\text{tors}}$, which is zero by \cite{ga2},
Lemma 4.8(c). Thus the given invertible resolution of $T$ induces an
exact sequence of connected N\'eron-Raynaud models
$$
1\ra\s F^{\circ}\ra \s R^{\e\circ}\ra\s T^{\circ}\ra 1.
$$
Since $T$ has multiplicative reduction over $S$, $\s T^{\circ}$ is
an $S$-torus and the preceding sequence is an invertible resolution
of $\s T^{\circ}$. Thus the corollary yields an exact sequence
$$
1\ra\s F^{\circ}(S)\ra\s R^{\e\circ}(S)\ra \s T^{\circ}(S)\ra C(\s
F^{\circ}) \ra C(\s R^{\e\circ})\ra C(\s T^{\circ})\ra 1.
$$
The groups $C(\s T^{\circ}),C(\s R^{\e\circ})$ and $C(\s F^{\circ})$
are the N\'eron-Raynaud class group of $T,R$ and $F$ over $S$,
respectively (see Example 3.9(b)), and the last exact sequence
coincides with the exact sequence of \cite{ga2}, Theorem 6.1, for
the $K$-torus $T$.
\end{remark}

\section{The duality theorem}

Let $K$ and $S$ be as in the Introduction. If $T$ is an $S$-torus
such that $\Sha^{1}_{S}(K,T)=0$, then $C(T)=H^{\e
1}(S_{\rm{\acute{e}t}},T)$ (see Theorem 3.4) is known to satisfy a
duality theorem. Namely, there exists a perfect pairing of finite
groups
\begin{equation}\label{pair}
C(T)\times H^{\e 2}_{\rm{c}}(S_{\rm{\acute{e}t}},T^{*})\ra\bq/\bz
\end{equation}
induced by the natural pairing $T\times T^{*}\ra\bg_{m,S}$, where
$H_{\rm{c}}$ denotes cohomology with compact support. See
\cite{adt}, Theorem  II.4.6(a), p.191. The purpose of this Section
is to extend the above duality theorem to an arbitrary reductive
group scheme $G$ over $S$, with $C_{\rm{ab}}(G\e)$ replacing $C(T)$.
See Theorem 4.12. In particular, Corollary 4.14 below extends
\eqref{pair} to an arbitrary $S$-torus $T$ (see Remark 4.15) .

\smallskip

For any prime number $\ell$, abelian group $B$ and positive integer
$m$, we will write $B_{\ell^{\le m}}$ for the $\ell^{\e m}$-torsion
subgroup of $B$ and $B/\ell^{\e m}$ for $B/\ell^{\e m}B$. Set
$B(\ell)=\cup_{\e m\geq 1}B_{\ell^{\e m}}$,
$B^{(\ell)}=\varprojlim_{m}B/\ell^{\e m}$ and $T_{\ell}\e
B=\varprojlim_{m}B_{\e\ell^{\le m}}$. Further, set $B_{\e
\ell\e\text{-div}}=\bigcap_{\, m} \ell^{\e m}B$ and
$B/\ldiv=B/B_{\ell\e\text{-div}}$.

If $B$ is an abelian topological group and $\bq/\e\bz$ is given the
discrete topology, let
$B^{D}=\text{Hom}_{\e\text{conts.}}(B,\bq/\e\bz)$ be the Pontryagin
dual of $B$. It is endowed with the compact-open topology. If $B$ is
discrete and torsion (respectively, profinite), then $B^{D}$ is
profinite (respectively, discrete and torsion). A continuous pairing
of topological abelian groups $A\times B\ra\bq/\e\bz$ is called {\it
non-degenerate on the right} (respectively,  {\it left}\,) if the
induced homomorphism $B\ra A^{D}$ (respectively, $A\ra B^{D}$) is
injective. It is called {\it non-degenerate} if it is non-degenerate
both on the right and on the left. The pairing is said to be {\it
perfect} if the homomorphisms $B\ra A^{D}$ and $A\ra B^{D}$ are
(topological) isomorphisms. It is not difficult to see that, for any
prime $p$, a perfect pairing $A\times B\ra\bq/\e\bz$ induces
pairings $A(p)\times(B/p\e\text{-div})\ra\bq/\e\bz$ and
$(A/p\e\text{-div})\times B(p)\ra \bq/\e\bz$ which are
non-degenerate on the left and on the right, respectively. Further,
two pairings $(-,-), (-,-)^{\e\prime}\colon A\times B\ra\bq/\e\bz$
are said to be {\it isomorphic} if there exist automorphisms
$\alpha$ of $A$ and $\beta$ of $B$ such that
$(a,b)^{\e\prime}=(\alpha(a),\beta(b))$ for all $a\in A$ and $b\in
B$.

By \cite{hs}, beginning of \S3, and \cite{ga1}, beginning of \S5,
for any cohomologically bounded complex $C$ of abelian sheaves on
$S_{\rm{fl}}$ there exist hypercohomology groups with compact
support ${\bh}^{\e i}_{\e\text{c}}(S_{\rm{fl}},C\e)$ which fit into
an exact sequence\footnote{Recall that, if $v$ is a real prime,
${\bh}^{\e i}(K_{v,\rm{fl}},C\e)$ denotes the $i$-th modified (Tate)
hypercohomology group of $C\times_{S}\spec K_{v}$.}
$$
\dots\ra\! {\bh}^{\e i}_{\e\text{c}}(S_{\rm{fl}},C\e)\!\ra\!
{\bh}^{\e
i}(S_{\rm{fl}},C\e)\!\ra\!\displaystyle\prod_{v\le\in\le\Sigma}{\bh}^{\e
i}(K_{v,\rm{fl}},C\e)\!\ra\! {\bh}^{\e
i+1}_{\e\text{c}}(S_{\rm{fl}},C\e)\!\ra\dots,
$$
where $\Sigma$ is the set of primes of $K$ that do not correspond to
a point of $S$. Set
\begin{equation}\label{dsc}
\begin{array}{rcl}
\bd^{\e i}\lbe(S,C\e)&=&\krn\!\!\left[\e\bh^{\e
i}\lbe(S_{\e\rm{fl}},C\e)\ra\displaystyle\prod_{v\le\in\le\Sigma}\bh^{\le i}(K_{v,\rm{fl}},C\e)\right]\\\\
&=&\img\!\left[\e \bh^{\e i}_{\e\text{c}}(S_{\e\rm{fl}},C\e) \ra
\bh^{\e i}(S_{\e\rm{fl}},C\e)\right]
\end{array}
\end{equation}
and
\begin{equation}\label{shac}
\Sha^{\e i}\lbe(K,C\e)=\krn\!\!\left[\e\bh^{\e
i}\lbe(K_{\rm{fl}},C\e)\ra\displaystyle\prod_{\text{all
$v$}}\bh^{\le i} (K_{v,\rm{fl}},C\e)\right].
\end{equation}

Now let $T_{1}$ and $T_{2}$ be arbitrary $S$-tori and let
$C=(T_{1}\ra T_{2})$, where $T_{1}$ and $T_{2}$ are placed in
degrees $-1$ and $0$, respectively. Let $C^{*}=(T_{2}^{*}\ra
T_{1}^{*})$ be the dual complex of twisted-constant $S$-groups,
where $T_{2}^{*}$ and $T_{1}^{*}$ are placed in degrees $-1$ and
$0$, respectively. Then the canonical morphism
$C\otimes^{\e\mathbf{L}}C^{*}\ra\bg_{m}[1]$ defined in \cite{dem1},
beginning of \S2, induces a pairing\footnote{Recall that, since the
Cartier dual of an $S$-group scheme of multiplicative type is
\'etale, the groups ${\bh}^{\e
2-i}_{\e\text{c}}(S_{\rm{\acute{e}t}},C^{*}\e)$ and ${\bh}^{\e
2-i}_{\e\text{c}}(S_{\rm{fl}},C^{*}\e)$ are canonically isomorphic.}
\begin{equation}\label{p1}
\langle-,-\rangle\colon{\bh}^{\e i}(S_{\rm{fl}},C\e)\times{\bh}^{\e
2-i}_{\e\text{c}}(S_{\rm{\acute{e}t}},C^{*}\e)\ra \bq/\bz
\end{equation}
(cf. \cite{hs}, p.108). The next result extends \cite{dem1},
Corollary 4.7, to the function field case.

\begin{proposition} The pairing \eqref{p1} induces a perfect pairing
of finite groups
$$
\bd^{\e 1}\lbe(S,C\e)\times \bd^{\e 1}\lbe(S,C^{\le *})\ra\bq/\e\bz.
$$
\end{proposition}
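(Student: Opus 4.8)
The plan is to obtain the restriction of \eqref{p1} to $\bd^{\e 1}$ as the diagonal of a Poitou--Tate-type duality for the complex $C$, reproducing over function fields the argument that yields \cite{dem1}, Corollary 4.7, over number fields. The two inputs I would use are the \emph{global} duality, asserting that for every $i$ the pairing \eqref{p1} (applied to both $C$ and $C^{*}$, and using $(C^{*})^{*}=C$) induces perfect pairings
$$
\bh^{\e i}_{\e\text{c}}(S_{\rm{fl}},C\e)\times\bh^{\e 2-i}(S_{\rm{fl}},C^{*}\e)\ra\bq/\bz,
$$
and the \emph{local} duality $\bh^{\e i}(K_{v,\rm{fl}},C\e)\times\bh^{\e 2-i}(K_{v,\rm{fl}},C^{*}\e)\ra\bq/\bz$ for each $v\in\Sigma$. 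Over a function field $K$ there are no archimedean primes, so every local term is an ordinary (rather than modified) hypercohomology group; granting this, both dualities follow from Milne's arithmetic duality theorems \cite{adt} together with the elementary homological algebra of the two-term complex $C=(T_{1}\ra T_{2})$, exactly as in \cite{dem1}.

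Given these inputs, first I would record the localization long exact sequence (the sequence displayed just before \eqref{dsc}) for both $C$ and $C^{*}$, together with the three cup-product pairings linking the compactly supported, ordinary and semilocal hypercohomology of $C$ with those of $C^{*}$. Dualizing the sequence for $C^{*}$ and inserting the isomorphisms supplied by global and local duality transforms it, term by term, into the sequence for $C$; what must be verified is that the three families of pairings are mutually compatible, so that the resulting ladder of exact sequences commutes up to sign. This compatibility of the cup products with the ``forget supports'' and localization maps is the standard but delicate core of every Poitou--Tate argument (compare \cite{hs}), and it is the step I expect to be the main obstacle here.

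With the commuting duality ladder in place the conclusion is formal. First, $\bd^{\e 1}(S,C\e)=\krn[\e\bh^{\e 1}(S_{\rm{fl}},C\e)\ra\prod_{v\in\Sigma}\bh^{\e 1}(K_{v,\rm{fl}},C\e)]$ by \eqref{dsc}; this is a Tate--Shafarevich-type group of the complex $C$ of $S$-tori and is finite over any global field by the finiteness results of \cite{adt} for $T_{1}$ and $T_{2}$, assembled along the hypercohomology spectral sequence, and likewise for $C^{*}$. Next, by the alternative description in \eqref{dsc}, $\bd^{\e 1}(S,C\e)$ is the image of the forget-supports map $f\colon\bh^{\e 1}_{\e\text{c}}(S_{\rm{fl}},C\e)\ra\bh^{\e 1}(S_{\rm{fl}},C\e)$; under the global pairings, and by the cup-product compatibility discussed above, this map is identified with the $\bq/\bz$-dual of the analogous map $f^{*}\colon\bh^{\e 1}_{\e\text{c}}(S_{\rm{fl}},C^{*}\e)\ra\bh^{\e 1}(S_{\rm{fl}},C^{*}\e)$, the identification being the adjunction that forgetting supports is self-adjoint for the cup product into compactly supported cohomology. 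Since the image of a homomorphism and the image of its dual are canonically Pontryagin dual, $\img f^{*}=\bd^{\e 1}(S,C^{*}\e)$ is canonically isomorphic to $(\img f)^{D}=\bd^{\e 1}(S,C\e)^{D}$. As both groups are finite, \eqref{p1} therefore induces the desired perfect pairing $\bd^{\e 1}(S,C\e)\times\bd^{\e 1}(S,C^{*}\e)\ra\bq/\bz$, reproducing \cite{dem1}, Corollary 4.7, in the function field case.
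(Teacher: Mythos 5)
There is a genuine gap, and it lies in the main input of your argument. The ``global duality'' you assume --- perfect pairings $\bh^{\e i}_{\e\text{c}}(S_{\rm{fl}},C\e)\times\bh^{\e 2-i}(S_{\rm{fl}},C^{*}\e)\ra\bq/\bz$ for \emph{every} $i$ (in both configurations, using $(C^{*})^{*}=C$) --- is not a theorem and is false in general. The groups in the localization sequences are neither finite nor torsion (in degree $0$ one finds finitely generated groups of $S$-points), and Artin--Verdier-type duality for tori over $S$ is perfect only after replacing certain groups by profinite completions. The paper itself flags exactly this point: the footnote in Section 4 preceding \eqref{dabseq} states that the degree-one pairing \eqref{pab} --- which, via the flasque-resolution isomorphisms, \emph{is} the pairing $\bh^{\e 1}(S_{\rm{fl}},C\e)\times\bh^{\e 1}_{\e\text{c}}(S_{\rm{\acute{e}t}},C^{*}\e)\ra\bq/\bz$ that your ladder needs --- is not perfect in general, because the Pontryagin dual of $H^{\le 3}_{\rm{c}}(S_{\rm{\acute{e}t}},T^{*})$ is not $H^{\le 0}(S_{\rm{\acute{e}t}},T\e)$ but its completion for the topology of subgroups of finite index. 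Consequently the commuting ladder of perfect dualities that you propose to dualize term by term does not exist, and the formal endgame (``$\img f^{*}$ is canonically dual to $\img f$'') has nothing to run on.

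The route actually taken --- Demarche's for $\ell\neq \text{char}\e K$ and the paper's for $\ell=p$ --- is prime-by-prime and works at finite level, where duality \emph{is} perfect: one establishes perfect pairings for $C\otimes^{\e\mathbf{L}}\bz/\ell^{\le m}$, passes to $\varinjlim$ and $\varprojlim$, and then descends to the finite groups $\bd^{\e 1}$ via nondegeneracy statements modulo divisible subgroups together with an orthogonality lemma (the paper's Lemmas 4.3 and 4.4). Your proposal also glosses over the one point that is genuinely new in the function field case: for $\ell=p=\text{char}\e K$, \'etale duality gives nothing, and the paper must invoke \emph{flat} duality (\cite{adt}, Theorem III.8.2) to obtain the finite-level pairings of its Lemma 4.2; your remark that the absence of archimedean places lets everything follow from \cite{adt} ``exactly as in \cite{dem1}'' is precisely the step that fails at $p$. (Your finiteness sketch is broadly in line with the paper's, which combines \cite{adt}, the finiteness of $\Sigma$, Oesterl\'e's theorem \cite{oes} and flasque resolutions; the defect is confined to, but fatal for, the duality part of the argument.)
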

\begin{proof} The pairing is defined as follows: if $a\in \bd^{\e
1}\lbe(S,C\e)\subset \bh^{\e 1}\lbe(S_{\e\rm{fl}},C\e)$ and
$a^{\e\prime}\in \bd^{\e 1}\lbe(S,C^{*}\e)\subset \bh^{\e
1}\lbe(S_{\rm{\acute{e}t}},C^{*}\e)$ is the image of
$b^{\e\prime}\in\bh^{\e
1}_{\e\text{c}}(S_{\rm{\acute{e}t}},C^{*}\e)$, then
$\{a,a^{\e\prime}\}= \langle\e a,b^{\e\prime}\e\rangle$, where
$\langle-,-\rangle$ is the pairing \eqref{p1} for $i=1$. We begin by
proving the finiteness statement. As noted in \cite{dem1}, proof of
Corollary 4.7, the finiteness of $\bd^{\e 1}\lbe(S,C\e)$ follows
from (a) the finiteness of $\Sigma$, (b) \cite{adt}, Corollary
I.2.4, p.29, and Theorem II.4.6(a), p.191, and (c) the finiteness of
$D^{2}(S,T)$, where $T$ is an $S$-torus. To prove the latter, assume
first that $T$ is flasque. Then $D^{2}(S,T)$ injects into
$\Sha^{2}(K,T)$ (see the proof of Theorem 3.13), which is finite by
\cite{oes}, Theorem 2.7(a), p.52. In the general case, let $1\ra
T\ra F\ra P\ra 1$ be a flasque resolution of $T$, where $F$ is
flasque and $P$ is quasi-trivial (see \cite{cts2}, (1.3.2), p.158).
Then there exists an exact commutative diagram
$$
\xymatrix{H^{1}(S_{\rm{\acute{e}t}},P)\ar[r]& H^{\e
2}(S_{\rm{\acute{e}t}},T\e)\ar[r]\ar[d]& H^{\e
2}\lbe(S_{\rm{\acute{e}t}},F\e)\ar[d]\\
1\ar[r]&\displaystyle\prod_{v\le\in\le\Sigma}H^{\e
2}(K_{v},T\e)\ar[r]& \displaystyle\prod_{v\le\in\le\Sigma}H^{\e
2}(K_{v},F\e)}
$$
which yields an exact sequence $H^{1}(S_{\rm{\acute{e}t}},P)\ra
D^{2}(S,T)\ra D^{2}(S,F)$. Since $H^{1}(S_{\rm{\acute{e}t}},P)$ and
$D^{2}(S,F)$ are both finite, $D^{2}(S,T)$ is finite as well. As
regards the finiteness of $\bd^{\e 1}\lbe(S,C^{*}\e)$, it again
follows from \cite{adt}, Corollary I.2.4, p.29, the finiteness of
$\Sigma$ and the finiteness of both $D^{2}(S,T_{2}^{*})$ (see
\cite{hs}, proof of Proposition 3.7, p.111), and
$H^{1}(S,T_{1}^{*})$ (see [op.cit.], proof of Lemma 3.2(3), p.108).
Now, it was shown in \cite{dem1}, Corollary 4.7, that the pairing
$$
\bd^{\e 1}\lbe(S,C\e)(\ell)\times \bd^{\e
1}\lbe(S,C^{*}\e)(\ell)\ra\bq/\e\bz,
$$
induced by the pairing of the statement, is perfect for any prime
$\ell\neq p$, where $p=\text{char}\e K$ in the function field case.
The following lemmas will show that it is perfect for $\ell=p$ as
well.

\begin{lemma} For every $i\in\bz$ and every $m\geq 1$, there
exists a perfect pairing
$$
\bh^{\e i}\big(S_{\rm{fl}},C\otimes^{\e\mathbf{L}}\bz/p^{\le
m}\big)\times \bh^{\e
1-i}_{\e\text{c}}\big(S_{\rm{\acute{e}t}},C^{*}\otimes^{\e\mathbf{L}}\bz/p^{\le
m}\big)\ra\bq/\bz,
$$
where the left-hand group is discrete and torsion and the right-hand
group is profinite.
\end{lemma}
\begin{proof} The proof is similar to the proof of \cite{dem1}, Proposition 4.2,
using \cite{adt}, Theorem III.8.2, p.290, in place of [op.cit.],
Corollary II.3.3(b), p.177.
\end{proof}

Now define\footnote{Note that the maps $\bz/p^{m}\ra \bz/p^{m+1}$
(respectively, $\bz/p^{m+1}\ra \bz/p^{m}$) are induced by
multiplication by $p$ on $\bz$ (respectively, the identity map of
$\bz$).}
$$
\bh^{\e i}\big(S_{\rm{fl}},C\otimes^{\e\mathbf{L}} \bq_{\e
p}/\e\bz_{\be p}\big)=\varinjlim_{m}\e \bh^{\e
i}\big(S_{\rm{fl}},C\otimes^{\e\mathbf{L}}\bz/p^{\le m}\big)
$$
and
$$
\bh^{\e i}\big(S_{\rm{fl}},C\otimes^{\e\mathbf{L}}\bz_{\be
p}\big)=\varprojlim_{m}\e \bh^{\e
i}\big(S_{\rm{fl}},C\otimes^{\e\mathbf{L}}\bz/p^{\le m}\big).
$$
Similar definitions apply with $\bh^{\e i}_{\e\text{c}}$ in place of
$\bh^{\e i}$. Then the previous lemma yields perfect pairings
$$
\bh^{\e i}\big(S_{\rm{fl}},C\otimes^{\e\mathbf{L}} \bq_{\e
p}/\e\bz_{\be p}\big)\times \bh^{\e
1-i}_{\e\text{c}}\big(S_{\rm{\acute{e}t}},C^{*}\otimes^{\e\mathbf{L}}\bz_{\be
p}\big)\ra\bq/\bz
$$
and
$$
\bh^{\e i}\big(S_{\rm{fl}},C\otimes^{\e\mathbf{L}} \bz_{\be
p}\big)\times \bh^{\e
1-i}_{\e\text{c}}\big(S_{\rm{\acute{e}t}},C^{*}\otimes^{\e\mathbf{L}}
\bq_{\e p}/\e\bz_{\be p}\big)\ra\bq/\bz.
$$
Consequently, there exist pairings
\begin{equation}\label{p2}
\left(\bh^{\e i}\big(S_{\rm{fl}},C\otimes^{\e\mathbf{L}} \bq_{\e
p}/\e\bz_{\be p}\big)/\pdiv\right)\times\bh^{\e
1-i}_{\e\text{c}}\big(S_{\rm{\acute{e}t}},C^{*}\otimes^{\e\mathbf{L}}\bz_{\be
p}\big)(p)\ra\bq/\bz
\end{equation}
and
\begin{equation}\label{p3}
\bh^{\e i}\big(S_{\rm{fl}},C\otimes^{\e\mathbf{L}} \bz_{\be
p}\big)(p)\times \left(\bh^{\e
1-i}_{\e\text{c}}\big(S_{\rm{\acute{e}t}},C^{*}\otimes^{\e\mathbf{L}}
\bq_{\e p}/\e\bz_{\be p}\big)/\pdiv\right)\ra\bq/\bz
\end{equation}
which are nondegenerate on the right and on the left, respectively.

\begin{lemma} For any $i$, the pairing \eqref{p1} induces a nondegenerate
pairing
$$
\left({\bh}^{\e
i}(S_{\rm{fl}},C\e)(p)/\pdiv\right)\times\left({\bh}^{\,
2-i}_{\e\text{c}}(S_{\rm{\acute{e}t}},C^{*}\e)(p)/\pdiv\right)\ra\bq/\e\bz.
$$
\end{lemma}
\begin{proof} The proof that follows is very similar to the proof of
\cite{ga1}, Theorem 5.2, to which the reader is referred for further
details. There exist an injection ${\bh}^{\e
i}(S_{\rm{fl}},C\e)(p)/\pdiv\hookrightarrow{\bh}^{\e
i}(S_{\rm{fl}},C)^{(p)}(p)$ (see [op.cit.], Lemma 2.1) and an exact
sequence
$$
1\ra{\bh}^{\e i}(S_{\rm{fl}},C)^{(p)}\ra \bh^{\e
i}\big(S_{\rm{fl}},C\otimes^{\e\mathbf{L}}\bz_{\be p}\big)\ra
T_{p}\,{\bh}^{\e i+1}(S_{\rm{fl}},C).
$$
See \cite{dem1}, p.16. Therefore ${\bh}^{\e
i}(S_{\rm{fl}},C)^{(p)}(p)=\bh^{\e
i}\big(S_{\rm{fl}},C\otimes^{\e\mathbf{L}}\bz_{\be p}\big)(p)$ and
the left-hand nondegeneracy of \eqref{p3} yields an injection
$$
{\bh}^{\e i}(S_{\rm{fl}},C\e)(p)/\pdiv\hookrightarrow\left(\bh^{\e
1-i}_{\e\text{c}}\big(S_{\rm{fl}},C^{*}\otimes^{\e\mathbf{L}}
\bq_{\e p}/\e\bz_{\be p}\big)/\pdiv\right)^{D}.
$$
On the other hand, there exists an exact sequence
$$
{\bh}_{\e\text{c}}^{\e
1-i}(S_{\rm{\acute{e}t}},C^{*})\,\otimes\bq_{\e p}/ \,\bz_{\be
p}\hookrightarrow \bh^{\e
1-i}_{\e\text{c}}\big(S_{\rm{\acute{e}t}},C^{*}\otimes^{\e\mathbf{L}}
\bq_{\e p}/\e\bz_{\be p}\big)\twoheadrightarrow{\bh}^{\e
2-i}_{\e\text{c}}(S_{\rm{\acute{e}t}},C^{*})(p)
$$
which identifies ${\bh}^{\e
2-i}_{\e\text{c}}(S_{\rm{\acute{e}t}},C^{*})(p)/\pdiv$ and $\bh^{\e
1-i}_{\e\text{c}}\big(S_{\rm{\acute{e}t}},C^{*}\otimes^{\e\mathbf{L}}
\bq_{\e p}/\e\bz_{\be p}\big)/\pdiv$. Thus we obtain an injection
$$
{\bh}^{\e i}(S_{\rm{fl}},C\e)(p)/\pdiv\hookrightarrow\left({\bh}^{\e
2-i}_{\e\text{c}}(S_{\rm{\acute{e}t}},C^{*})(p)/\pdiv\right)^{D},
$$
i.e., the pairing of the lemma is nondegenerate on the left. To see
that it is nondegenerate on the right, interchange in the above
argument $C$ and $C^{*}$, $i$ and $2-i$, ${\bh}$ and
${\bh}_{\e\text{c}}$ and use the right-hand nondegeneracy of
\eqref{p2}.
\end{proof}

\begin{lemma} Assume that $a\in \bd^{\e 1}\lbe(S,C\e)\cap p^{\e
m}\e \bh^{\e 1}\lbe(S_{\e\rm{fl}},C\e)$ is orthogonal to $\bd^{\e
1}\lbe(S,C^{*}\e)_{p^{\e m}}$ under the pairing of Proposition 4.1.
Then $a\in p^{\e m}\bd^{\e 1}\lbe(S,C\e)$.
\end{lemma}
\begin{proof} The proof is formally the same as the proof of
\cite{dem1}, Lemma 4.5, using Lemma 4.2 above in place of [op.cit.],
Proposition 4.2.
\end{proof}

{\it We can now complete the proof of Proposition 4.1}. By Lemmas
4.3 and 4.4, the pairing of Proposition 4.1 induces a nondegenerate
(and therefore perfect) pairing of finite groups $\bd^{\e
1}\lbe(S,C\e)(p)\times \bd^{\e 1}\lbe(S,C^{*}\e)(p)\ra\bq/\e\bz$.
See \cite{dem1}, proof of Corollary 4.6, p.19.
\end{proof}

Let $G$ be a reductive group scheme over $S$. Then there exist {\it
dual abelian cohomology groups with compact support}
$$
H_{\rm{ab,\e c}}^{\le i}(S_{\e\rm{\acute{e}t}},G^{*})={\bh}^{\e
i}_{\e\text{c}}\big(S_{\e\rm{\acute{e}t}},Z(G\e)^{*}\ra
Z\big(\Gtil\e\big)^{\be *}\e\big)
$$
which fit into an exact sequence
\begin{equation}\label{cs*}
\dots\ra\! H_{\rm{ab,\e c}}^{\le
i}(S_{\e\rm{\acute{e}t}},G^{*})\!\ra\! H_{\rm{ab}}^{\le
i}(S_{\e\rm{\acute{e}t}},G^{*})\!\ra\!\displaystyle\prod_{v\le\in\le\Sigma}H_{\rm{ab}}^{\le
i}(K_{v},G^{*})\!\ra\! H_{\rm{ab,\e c}}^{\le
i+1}(S_{\e\rm{\acute{e}t}},G^{*})\!\ra\dots.
\end{equation}
These groups have the following property (cf. \cite{adt},
Proposition III.0.4(c), p.221, and Remark III.0.6, p.223). Let $V$
be a nonempty open subscheme of $S$. Then there exists an exact
sequence
\begin{equation}\label{cov}
\dots\ra\! H_{\rm{ab,\e c}}^{\le
i}(V_{\e\rm{\acute{e}t}},G^{*})\!\ra\! H_{\rm{ab,\e c}}^{\le
i}(S_{\e\rm{\acute{e}t}},G^{*})\!\ra\!\displaystyle\prod_{v\in
S\e\setminus V}H_{\rm{ab}}^{\le i}(\ove,G^{*})\!\ra\dots.
\end{equation}
Further, by applying an analog of \cite{adt}, Proposition
III.0.4(b), p.220, to the short exact sequence of complexes
$$
1\ra\big(G^{\tor*}\be\ra 1\e\big)\ra \big(Z(G\e)^{*}\ra
Z\big(\Gtil\e\big)^{\be *}\big)\ra\big(Z(G^{\der})^{*}\ra
Z\big(\Gtil\e\big)^{\be *}\e\big)\ra 1
$$
and using the quasi-isomorphism $\big(Z(G^{\der})^{*}\be\ra\be
Z\big(\Gtil\e\big)^{\be *}\e\big)\simeq \big(\e 1\ra\mu^{\be
*}\e\big)$, we conclude that the groups $H_{\rm{ab,\e c}}^{\le
i}(S_{\e\rm{\acute{e}t}},G^{*})$ fit into an exact sequence
\begin{equation}\label{c*}
\dots\ra H^{\e i-1}_{\rm{c}}(S_{\e\rm{\acute{e}t}},\mu^{\be *}\e)\ra
H^{\le i+1}_{\rm{c}}(S_{\e\rm{\acute{e}t}},G^{\tor*}\e)\ra
H_{\rm{ab,\e c}}^{\le i}(S_{\e\rm{\acute{e}t}},G^{*})\ra H^{\e
i}_{\rm{c}}(S_{\e\rm{\acute{e}t}},\mu^{\lbe *}\e)\ra\dots.
\end{equation}

\begin{examples}\indent

\begin{enumerate}

\item[(a)] If $G$ is semisimple, i.e., $G^{\tor}=0$, then $H^{\e i}_{\rm{ab,\e c}}
(S_{\e\rm{\acute{e}t}},G^{*})=H^{\e
i}_{\rm{c}}(S_{\e\rm{\acute{e}t}},\mu^{\lbe *})$.
\item[(b)] If $G$ has trivial fundamental group, i.e., $\mu=0$, then
$H^{\e i}_{\rm{ab,\e c}} (S_{\e\rm{\acute{e}t}},G^{*})=H^{\e
i+1}_{\rm{c}}(S_{\e\rm{\acute{e}t}},G^{\tor*}\e)$.

\end{enumerate}

\end{examples}

Now let $1\ra F\ra H \ra G \ra 1$ be a flasque resolution of $G$ and
let $C=(F\ra R)$, where $R=H^{\tor}$. By \cite{ga4}, Proposition
4.2, the given resolution induces isomorphisms $H^{\le
i}_{\rm{ab}}(S_{\e\rm{fl}},G\e)\simeq{\bh}^{\le
i}(S_{\e\rm{fl}},C\e), H^{\le
i}_{\rm{ab}}(S_{\e\rm{\acute{e}t}},G^{*}\e)\simeq{\bh}^{\le
i}(S_{\e\rm{\acute{e}t}},C^{*})$ and $H_{\rm{ab,\e c}}^{\le
i}(S_{\e\rm{\acute{e}t}},G^{*})\simeq{\bh}^{\e
i}_{\e\text{c}}(S_{\e\rm{\acute{e}t}},C^{*})$. In particular, via
the above isomorphisms, the pairing \eqref{p1} defines a pairing
\begin{equation}\label{pab}
H^{\le 1}_{\rm{ab}}(S_{\e\rm{fl}},G\e)\times H_{\rm{ab,\e c}}^{\le
1}(S_{\e\rm{\acute{e}t}},G^{*})\ra \bq/\bz.
\end{equation}
Clearly, a different choice of flasque resolution of $G$ yields
another such pairing which is isomorphic to \eqref{pab}, i.e.,
\eqref{pab} is independent up to isomorphism of the chosen flasque
resolution of $G$.

Similarly, for every $v\le\in\le\Sigma$, there exist isomorphisms
$H^{\le i}_{\rm{ab}}(K_{v,\rm{fl}},G\e)\simeq {\bh}^{\le
i}(K_{v,\rm{fl}},C\e)$ and $H^{\le i}_{\rm{ab}}(K_{v},G^{*}\e)\simeq
{\bh}^{\le i}(K_{v},C^{*}\e)$. Thus, if $D^{1}_{\rm{ab}}\lbe(S,G\e)$
is the group \eqref{defs} and
\begin{equation}\label{d'}
\begin{array}{rcl}
D^{1}_{\rm{ab}}\lbe(S,G^{*}\e)&=&\krn\!\!\left[\e H^{\e
1}_{\rm{ab}}\lbe(S_{\e\rm{\acute{e}t}},G^{*})
\ra\displaystyle\prod_{v\le\in\le\Sigma}H^{\le
1}_{\rm{ab}}(K_{v},G^{*})\right]\\\\
&=&\img\!\left[\e H_{\rm{ab,\e c}}^{\le
1}(S_{\e\rm{\acute{e}t}},G^{*}) \ra H_{\rm{ab}}^{\le
1}(S_{\e\rm{\acute{e}t}},G^{*})\right],
\end{array}
\end{equation}
then there exist isomorphisms
$D^{1}_{\rm{ab}}\lbe(S,G\e)\simeq\bd^{\e 1}\lbe(S,C\e)$ and
$D^{1}_{\rm{ab}}\lbe(S,G^{*}\e)\simeq\bd^{\e 1}\lbe(S,C^{*}\e)$.
Consequently, the following is an immediate corollary of Proposition
4.1.

\begin{proposition} The pairing \eqref{pab} induces a perfect pairing of finite
groups
$$
D^{1}_{\rm{ab}}(S,G\e)\times D^{1}_{\rm{ab}}(S,G^{*})\ra\bq/\e\bz,
$$
where $D^{1}_{\rm{ab}}(S,G\e)$ and $D^{1}_{\rm{ab}}(S,G^{*})$ are
the groups \eqref{defs} and \eqref{d'}, respectively.\qed
\end{proposition}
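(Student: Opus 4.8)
The plan is to deduce the statement from Proposition 4.1 by transporting the perfect pairing on the hypercohomology of a complex of tori to the abelian cohomology of $G$ via a flasque resolution; as the surrounding text indicates, there is essentially no content beyond this translation. First I would fix a flasque resolution $1\ra F\ra H\ra G\ra 1$ of $G$ and form the two-term complex $C=(F\ra R)$ with $R=H^{\tor}$, as in the paragraph preceding the statement. The purpose of this choice is that \cite{ga4}, Proposition 4.2, furnishes canonical isomorphisms $H^{\le 1}_{\rm{ab}}(S_{\e\rm{fl}},G\e)\simeq\bh^{\le 1}(S_{\e\rm{fl}},C\e)$, $H^{\le 1}_{\rm{ab}}(S_{\e\rm{\acute{e}t}},G^{*})\simeq\bh^{\le 1}(S_{\e\rm{\acute{e}t}},C^{*})$ and $H_{\rm{ab,\e c}}^{\le 1}(S_{\e\rm{\acute{e}t}},G^{*})\simeq\bh^{\e 1}_{\e\text{c}}(S_{\e\rm{\acute{e}t}},C^{*})$, and that the pairing \eqref{pab} was by construction the transport of \eqref{p1} along these identifications.

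Second, I would record the corresponding local isomorphisms $H^{\le 1}_{\rm{ab}}(K_{v,\rm{fl}},G\e)\simeq\bh^{\le 1}(K_{v,\rm{fl}},C\e)$ and $H^{\le 1}_{\rm{ab}}(K_{v},G^{*})\simeq\bh^{\le 1}(K_{v},C^{*})$ for each $v\in\Sigma$, and verify that they are compatible with the global ones under the base-change maps $\spec K_{v}\ra S$. This compatibility is the only point that genuinely requires checking, and it follows from the naturality of the isomorphisms of \cite{ga4}, Proposition 4.2, with respect to $\spec K_{v}\ra S$. Granting it, the defining kernels \eqref{defs} and \eqref{d'} of $D^{1}_{\rm{ab}}(S,G\e)$ and $D^{1}_{\rm{ab}}(S,G^{*})$ are carried isomorphically onto the kernels $\bd^{\e 1}\lbe(S,C\e)$ and $\bd^{\e 1}\lbe(S,C^{*}\e)$ of \eqref{dsc}, and these isomorphisms intertwine \eqref{pab} with the pairing of Proposition 4.1.

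Finally, I would invoke Proposition 4.1, which asserts that $\bd^{\e 1}\lbe(S,C\e)\times\bd^{\e 1}\lbe(S,C^{*}\e)\ra\bq/\e\bz$ is a perfect pairing of finite groups. Transporting this conclusion along the isomorphisms of the previous step yields at once the finiteness of $D^{1}_{\rm{ab}}(S,G\e)$ and $D^{1}_{\rm{ab}}(S,G^{*})$ and the perfectness of \eqref{pab}. The one formal caveat is independence of the chosen resolution: a different flasque resolution produces an isomorphic pairing, as already noted for \eqref{pab}, so the perfectness transported from Proposition 4.1 is unambiguous. In short, I do not anticipate any real obstacle here; the entire difficulty is concentrated in Proposition 4.1, and the present statement is its immediate reformulation in terms of abelian cohomology.
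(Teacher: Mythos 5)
Your proposal is correct and follows essentially the same route as the paper: the paper likewise fixes a flasque resolution, uses the isomorphisms of \cite{ga4}, Proposition 4.2, globally, locally at each $v\in\Sigma$, and for compact supports to identify $D^{1}_{\rm{ab}}(S,G\e)$ with $\bd^{\e 1}\lbe(S,C\e)$ and $D^{1}_{\rm{ab}}(S,G^{*})$ with $\bd^{\e 1}\lbe(S,C^{*}\e)$, and then deduces the statement immediately from Proposition 4.1. Your added remarks on naturality under $\spec K_{v}\ra S$ and independence of the chosen resolution are exactly the points the paper treats (the latter in the paragraph defining \eqref{pab}), so there is nothing missing.
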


\begin{examples} \indent

\begin{enumerate}
\item[(a)] If $G$ is semisimple, then the proposition yields a perfect pairing of
finite groups
$$
D^{2}(S,\mu\e)\times D^{1}(S,\mu^{\be *})\ra\bq/\e\bz.
$$
See Example 2.1(a). Compare \cite{adt}, Corollary II.3.4, p.178, and
\cite{ga1}, Lemma 4.7.

\item[(b)] If $G$ has trivial fundamental group, then the
proposition yields a perfect pairing of finite groups
$$
D^{1}(S,G^{\tor})\times D^{2}(S,G^{\tor*})\ra\bq/\e\bz.
$$
See Example 2.1(b). Compare \cite{adt}, Corollary II.4.7, p.192, and
\cite{ga1}, Theorem 5.7 (for $\s M=(1\ra G^{\tor})$).
\end{enumerate}
\end{examples}

\begin{lemma} Let $Y$ be an $S$-group scheme which is
\'etale-locally isomorphic to ${\bz}^{\! r}$ for some $r\geq 1$.
Then the canonical map $H^{\le 2}(S_{\rm{\acute{e}t}},Y\e)\ra H^{\le
2}(K,Y\e)$ is injective.
\end{lemma}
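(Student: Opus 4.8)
The plan is to replace $Y$ by its rational and torsion companions and to exploit the fact that $Y$, being unramified at every closed point of $S$, is insensitive to passage to the generic fibre. Write $j\colon\spec K\to S$ for the inclusion of the generic point, put $Y_{\bq}=Y\otimes_{\bz}\bq$, and work with the short exact sequence of étale sheaves $0\to Y\to Y_{\bq}\to Y\otimes\bq/\bz\to 0$ on $S_{\rm{\acute{e}t}}$. The idea is to shift the problem down one cohomological degree: if the rational sheaf has no higher cohomology on $S$ or on $\spec K$, then the boundary maps will identify $H^{2}(S_{\rm{\acute{e}t}},Y)$ with $H^{1}(S_{\rm{\acute{e}t}},Y\otimes\bq/\bz)$ and $H^{2}(K,Y)$ with $H^{1}(K,Y\otimes\bq/\bz)$, and an $H^{1}$-injectivity statement is easy to obtain from the Leray five-term sequence.

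First I would record the geometric input. Since $Y$ extends to a locally constant sheaf on all of $S$, the inertia group $I_{v}$ at each closed point $v\in S_{0}$ acts trivially on the generic stalk; hence the adjunction unit $\mathcal F\to j_{*}j^{*}\mathcal F$ is an isomorphism for each of $\mathcal F=Y$, $Y_{\bq}$ and $Y\otimes\bq/\bz$, because the stalk of $j_{*}j^{*}\mathcal F$ at $v$ is $(\mathcal F_{\bar\eta})^{I_{v}}=\mathcal F_{\bar v}$. Moreover, as $I_{v}$ is profinite and $\bq$ is uniquely divisible, $H^{q}(I_{v},\bq)=0$ for $q\ge 1$, so the stalks of $R^{q}j_{*}Y_{\bq}$ vanish and $R^{q}j_{*}Y_{\bq}=0$ for all $q\ge 1$. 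Feeding this into the Leray spectral sequence for $j$, the sequence collapses and the restriction map gives an isomorphism $H^{i}(S_{\rm{\acute{e}t}},Y_{\bq})\cong H^{i}(K,Y_{\bq})$ for every $i$. Since $Y_{\bq}$ is a $\bq$-vector space Galois module split by a finite extension, $H^{i}(K,Y_{\bq})=0$ for $i\ge 1$, whence $H^{1}(S_{\rm{\acute{e}t}},Y_{\bq})=H^{2}(S_{\rm{\acute{e}t}},Y_{\bq})=0$.

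With this vanishing in hand, the long exact cohomology sequence of the displayed short exact sequence yields a boundary isomorphism $H^{1}(S_{\rm{\acute{e}t}},Y\otimes\bq/\bz)\cong H^{2}(S_{\rm{\acute{e}t}},Y)$, and likewise $H^{1}(K,Y\otimes\bq/\bz)\cong H^{2}(K,Y)$; by exactness of $j^{*}$ and naturality of the boundary map these identifications are compatible with restriction to $K$. The lemma therefore reduces to the injectivity of $H^{1}(S_{\rm{\acute{e}t}},Y\otimes\bq/\bz)\to H^{1}(K,Y\otimes\bq/\bz)$. For this I invoke the five-term exact sequence of the Leray spectral sequence for $j$ applied to $\mathcal F=Y\otimes\bq/\bz$, whose initial segment reads $0\to H^{1}(S_{\rm{\acute{e}t}},j_{*}j^{*}\mathcal F)\to H^{1}(K,j^{*}\mathcal F)\to H^{0}(S,R^{1}j_{*}j^{*}\mathcal F)$; combined with the identification $j_{*}j^{*}\mathcal F=\mathcal F$ established in the second paragraph, this is exactly the desired injectivity. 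Assembling the two boundary isomorphisms and this injection into one commutative square then gives injectivity of $H^{2}(S_{\rm{\acute{e}t}},Y)\to H^{2}(K,Y)$.

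The main obstacle is the $\bq$-coefficient vanishing $H^{1}(S_{\rm{\acute{e}t}},Y_{\bq})=H^{2}(S_{\rm{\acute{e}t}},Y_{\bq})=0$: once the boundary isomorphisms are available, everything else is formal, but this vanishing is what forces the Leray collapse rather than a bare comparison of long exact sequences. I expect the one genuinely non-formal point to be the verification that $R^{q}j_{*}$ kills $\bq$-coefficients, via $H^{q}(I_{v},\bq)=0$; this holds in every characteristic precisely because $\bq$ is uniquely divisible, so that each finite quotient of $I_{v}$ has higher cohomology annihilated by its order. As a sanity check, in the case $Y=\bz$ the above identifications recover $H^{2}(S_{\rm{\acute{e}t}},\bz)\cong H^{1}(S_{\rm{\acute{e}t}},\bq/\bz)=\hom(\pi_{1}(S),\bq/\bz)$ (continuous characters), and the asserted injectivity becomes the dual of the surjection $\pi_{1}(\spec K)\onto\pi_{1}(S)$, which is the standard surjectivity of the fundamental group of the generic point onto that of the normal integral base $S$.
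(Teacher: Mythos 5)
Your proof is correct, but it takes a genuinely different route from the paper's. The paper passes to the pro-\'etale cover $S^{\e\prime}\ra S$ attached to the maximal subextension $K_{S}\subset K^{\rm{sep}}$ unramified at all points of $S_{0}$: over $S^{\e\prime}$ the sheaf $Y$ becomes constant, the proofs of Milne's ADT II.2.9 and II.2.10 give $H^{1}(S^{\e\prime}_{\rm{\acute{e}t}},Y)=H^{2}(S^{\e\prime}_{\rm{\acute{e}t}},Y)=0$, the Hochschild--Serre spectral sequence then identifies $H^{2}(S_{\rm{\acute{e}t}},Y)$ with the group cohomology $H^{2}(\g_{\!S},Y)$, and finally the injectivity of the inflation-type map $H^{2}(\g_{\!S},Y)\ra H^{2}(K,Y)$ is quoted from Harari--Szamuely. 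You instead stay on $S$ and run the d\'evissage $0\ra Y\ra Y_{\bq}\ra Y\otimes\bq/\bz\ra 0$ directly: the Leray spectral sequence for $j\colon\spec K\ra S$ together with the vanishing of profinite cohomology with uniquely divisible coefficients kills $H^{1}$ and $H^{2}$ of $Y_{\bq}$ over both $S$ and $K$, shifting the problem down to $H^{1}$ of the locally constant torsion sheaf $Y\otimes\bq/\bz$, where injectivity is exactly the five-term exact sequence combined with $\mathcal{F}\isoto j_{*}j^{*}\mathcal{F}$. It is worth noting that the same $\bq$-versus-$\bq/\bz$ d\'evissage occurs in the paper's proof as well, but hidden inside the cited ADT arguments over $S^{\e\prime}$, where $Y$ is constant. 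Your version is more self-contained -- no pro-cover, no Hochschild--Serre, no external injectivity citation -- at the modest cost of verifying the standard Dedekind-base facts you use (stalks of $j_{*}$ and $R^{q}j_{*}$ via inertia, constancy of locally constant sheaves over strict henselizations, and the identification of the Leray edge map with restriction); the paper's proof is shorter precisely because it delegates these points to published results.
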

\begin{proof} Let $K_{\be S}$ be the maximal subfield of the separable closure of $K$
which is unramified at all primes of $K$ which correspond to a
(closed) point of $S$. Further, let $S^{\e\prime}$ be the
normalization of $S$ in $K_{S}$ and write
$\g_{\!S}=\text{Gal}(K_{S}/K)$. Then $Y\times_{S}S^{\e\prime}$ is
constant and the proof of \cite{adt}, Lemma II.2.10, p.172, shows
that $H^{\le 2}(S^{\e\prime}_{\rm{\acute{e}t}},Y\e)$ is a quotient
of $H^{\le 1}(S^{\e\prime}_{\rm{\acute{e}t}},(Y\times\bq)/Y\e)$.
Since the latter group is zero by [op.cit.], proof of Proposition
II.2.9, p.171, so is $H^{\le
2}(S^{\e\prime}_{\rm{\acute{e}t}},Y\e)$. Now, since $H^{\le
1}(S^{\e\prime}_{\rm{\acute{e}t}},Y\e)$ is also zero [loc.cit.], the
exact sequence of terms of low degree belonging to the
Hochschild-Serre spectral sequence $H^{p}(\g_{\!S},
H^{q}(S^{\e\prime}_{\rm{\acute{e}t}},Y))\implies
H^{p+q}(S_{\rm{\acute{e}t}},Y)$ yields an isomorphism
$H^{2}(\g_{\!S},Y)\simeq H^{\le 2}(S_{\rm{\acute{e}t}},Y\e)$. On the
other hand, the canonical map $H^{2}(\g_{\!S},Y)\ra H^{2}(K,Y)$ is
injective (see \cite{hs}, p.112, lines 10-15), and the lemma
follows.
\end{proof}

Now define
\begin{equation}\label{sha*}
\Sha^{1}_{\rm{ab}}\lbe(K,G^{*}\e)=\krn\!\!\left[\e H^{\e
1}_{\rm{ab}}\lbe(K,G^{*}\e) \ra\displaystyle\prod_{\text{all
$v$}}H^{\le 1}_{\rm{ab}}(K_{v},G^{*}\e)\right].
\end{equation}

\begin{proposition} The pairing of Proposition 4.6 induces a perfect
pairing of finite groups
$$
\Sha^{1}_{\rm{ab}}(K,G\e)\times
\Sha^{1}_{\rm{ab}}(K,G^{*})\ra\bq/\e\bz,
$$
where $\Sha^{1}_{\rm{ab}}(K,G\e)$ and $\Sha^{1}_{\rm{ab}}(K,G^{*})$
are the groups \eqref{absha} and \eqref{sha*}, respectively.
\end{proposition}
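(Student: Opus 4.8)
The plan is to descend the perfect pairing
$$
D^1_{\rm{ab}}(S,G\e)\times D^1_{\rm{ab}}(S,G^*)\ra\bq/\bz
$$
of Proposition 4.6 to the subquotients cut out by the extra local conditions at the primes of $S_{0}$. The first step is to record two structural facts. On the one hand, the sequence \eqref{secuab} exhibits $\Sha^1_{\rm{ab}}(K,G\e)$ as the \emph{quotient} of $D^1_{\rm{ab}}(S,G\e)$ by the abelian class group $C_{\rm{ab}}(G\e)$. On the other hand, I claim that $\Sha^1_{\rm{ab}}(K,G^*)$ is naturally a \emph{subgroup} of $D^1_{\rm{ab}}(S,G^*)$. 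Indeed, writing $C^*=(R^*\ra F^*)$ for the dual of a flasque resolution, Lemma 4.8 applied to the twisted-constant groups $R^*$ and $F^*$ (together with the easier injectivity of $H^{\le 1}(S_{\e\rm{\acute{e}t}},-)\ra H^{\le 1}(K,-)$ on such lattices) shows, via the hypercohomology sequence of $C^*$ and the five lemma, that the restriction $H^1_{\rm{ab}}(S_{\e\rm{\acute{e}t}},G^*)\ra H^1_{\rm{ab}}(K,G^*)$ is injective. Hence $D^1_{\rm{ab}}(S,G^*)$ injects into $H^1_{\rm{ab}}(K,G^*)$, its image consisting of the classes that die on $\Sigma$, and
$$
\Sha^1_{\rm{ab}}(K,G^*)=\krn\!\left[\e D^1_{\rm{ab}}(S,G^*)\ra\prod_{v\in S_{0}}H^1_{\rm{ab}}(K_v,G^*)\right]
$$
is exactly the subgroup of those classes that in addition die on $S_{0}$.

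The heart of the proof is then to show that, under the pairing of Proposition 4.6, the subgroup $\Sha^1_{\rm{ab}}(K,G^*)\subset D^1_{\rm{ab}}(S,G^*)$ is precisely the annihilator of $C_{\rm{ab}}(G\e)\subset D^1_{\rm{ab}}(S,G\e)$. The key point is that both subgroups are governed by \emph{dual} local data at $S_{0}$: localization (Gysin) at the closed points identifies $C_{\rm{ab}}(G\e)=\krn[\e H^1_{\rm{ab}}(S_{\e\rm{fl}},G\e)\ra H^1_{\rm{ab}}(K_{\rm{fl}},G\e)]$ with the image of the classes supported on $S_{0}$, whereas $\Sha^1_{\rm{ab}}(K,G^*)$ is cut out by the \emph{vanishing} of the $S_{0}$-localizations. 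I would make this precise by assembling the commutative diagram relating the compact-support sequence \eqref{cs*} for $G^*$, the global-to-local sequence for $G$, and the local pairings $H^1_{\rm{ab}}(K_v,G\e)\times H^1_{\rm{ab}}(K_v,G^*)\ra\bq/\bz$ at each $v\in S_{0}$. The compatibility of \eqref{pab}, built from the compact-support pairing \eqref{p1} and the fundamental class, with these local pairings then reads: for $b\in D^1_{\rm{ab}}(S,G^*)$ the functional $\langle-,b\rangle$ on $C_{\rm{ab}}(G\e)$ is computed place-by-place over $S_{0}$. By the perfectness of local duality at each $v\in S_{0}$, this functional vanishes exactly when the $v$-localization of $b$ is trivial for every $v\in S_{0}$, i.e.\ exactly when $b\in\Sha^1_{\rm{ab}}(K,G^*)$. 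This yields both inclusions at once, so $C_{\rm{ab}}(G\e)^{\perp}=\Sha^1_{\rm{ab}}(K,G^*)$.

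Granting the annihilator statement, the proposition follows from the elementary fact that a perfect pairing $A\times B\ra\bq/\bz$ of finite groups restricts to a perfect pairing $(A/A_{0})\times A_{0}^{\perp}\ra\bq/\bz$ for any subgroup $A_{0}\subset A$: taking $A=D^1_{\rm{ab}}(S,G\e)$, $A_{0}=C_{\rm{ab}}(G\e)$ and $A_{0}^{\perp}=\Sha^1_{\rm{ab}}(K,G^*)$ gives the desired perfect pairing
$$
\Sha^1_{\rm{ab}}(K,G\e)\times\Sha^1_{\rm{ab}}(K,G^*)\ra\bq/\bz.
$$
Finiteness of both groups is inherited from the finiteness of $D^1_{\rm{ab}}(S,G\e)\simeq\bd^{\e 1}(S,C\e)$ and $D^1_{\rm{ab}}(S,G^*)\simeq\bd^{\e 1}(S,C^{\le *})$ already established in Proposition 4.1. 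I expect the main obstacle to be the local--global compatibility invoked in the second paragraph: one must verify that the globally-defined pairing \eqref{pab}, manufactured from compact-support hypercohomology and the trace map, genuinely decomposes as the sum of the local Tate pairings on the $S_{0}$-components, and that local duality for the complex $C$ holds at every $v\in S_{0}$. Both points are delicate in the present generality---especially for the $p$-primary part in the function-field case, where one must rely on the flat-topology refinements used in Lemmas 4.2--4.4 rather than on classical Tate duality.
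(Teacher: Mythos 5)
Your strategy inverts the paper's logical order, and in doing so it rests on two claims that your sketch does not prove. The heart of your argument --- that the exact annihilator of $C_{\rm{ab}}(G\e)\subset D^{1}_{\rm{ab}}\lbe(S,G\e)$ under the pairing of Proposition 4.6 is $\Sha^{1}_{\rm{ab}}\lbe(K,G^{*})$ --- is precisely Theorem 4.12 of the paper, which the paper \emph{deduces from} Proposition 4.9 (together with Proposition 4.6) by dualizing the sequence \eqref{secuab}. So to avoid circularity you must prove the annihilator statement independently, and the sketch does not do this. First, your identification $\Sha^{1}_{\rm{ab}}\lbe(K,G^{*})=\krn\be\left[\e D^{1}_{\rm{ab}}\lbe(S,G^{*})\ra\prod_{v\in S_{0}}H^{\le 1}_{\rm{ab}}(K_{v},G^{*})\right]$ has only one easy inclusion: an element of the kernel certainly dies at every place. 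The reverse inclusion asserts that \emph{every} everywhere-locally-trivial class in $H^{\le 1}_{\rm{ab}}(K,G^{*})$ is unramified over all of $S$, i.e., lies in the image of $D^{1}_{\rm{ab}}\lbe(S,G^{*})$. Your phrase ``its image consisting of the classes that die on $\Sigma$'' is exactly the point at issue: by Proposition 4.11 the image consists of the \emph{unramified} classes dying on $\Sigma$, and the needed containment is (part of) Proposition 4.16, whose proof uses the entire apparatus of the paper's proof of Proposition 4.9 (shrinking to small opens, Demarche's surjectivity arguments, an inverse-limit argument) and comes \emph{after} it. Second, the local--global computation of the functional $\langle-,b\rangle$ on $C_{\rm{ab}}(G\e)$ is a plan rather than a proof: it requires (i) a localization (Gysin) description of $C_{\rm{ab}}(G\e)$ by classes supported on $S_{0}$, (ii) a local duality theorem for the complex $C=(F\ra R)$ over each $\ov$ --- including its $p$-primary part in the function field case, where only flat-cohomology methods are available --- and (iii) the compatibility of the compact-support pairing \eqref{p1} with these local pairings. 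None of these is established in the paper in the form you need, and you yourself flag them as the ``main obstacle''; they constitute most of the work.

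The idea your proposal misses is that no duality at the places of $S_{0}$ is needed at all. Proposition 4.6 (equivalently Proposition 4.1) holds over \emph{every} nonempty open subscheme of $S$, and the paper's proof simply produces a nonempty open $W\subset S$ such that for every nonempty open $V\subset W$ the natural maps $D^{1}_{\rm{ab}}\lbe(V,G\e)\ra\Sha^{1}_{\rm{ab}}\lbe(K,G\e)$ and $D^{1}_{\rm{ab}}\lbe(V,G^{*})\ra\Sha^{1}_{\rm{ab}}\lbe(K,G^{*})$ are isomorphisms: injectivity follows from flasqueness of $F$, Lemma 4.8 and the vanishing $H^{\le 1}(V_{\rm{\acute{e}t}},R\e)=0$ for $V$ small, while surjectivity is imported from Demarche's arguments. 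The proposition is then literally Proposition 4.6 applied over $W$. If you want to retain your $S$-level picture, the clean route is the paper's: prove Proposition 4.9 by shrinking, and only afterwards obtain your annihilator statement as Theorem 4.12, and the identification of $\Sha^{1}_{\rm{ab}}\lbe(K,G^{*})$ inside $D^{1}_{\rm{ab}}\lbe(S,G^{*})$ as Proposition 4.16.
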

\begin{proof} We will show that there exist a nonempty open subset $W$ of $S$
and canonical isomorphisms $D^{1}_{\rm{ab}}(W,G\e)\simeq
\Sha^{1}_{\rm{ab}}(K,G\e)$ and $D^{1}_{\rm{ab}}(W,G^{*})\simeq
\Sha^{1}_{\rm{ab}}(K,G^{*})$. The proposition will then follow
immediately from Proposition 4.6. Let $1\ra F\ra H \ra G \ra 1$ be a
flasque resolution of $G$ and set $R=H^{\tor}$. Then, for any
nonempty open subset $U$ of $S$, the given resolution induces
isomorphisms $D^{1}_{\rm{ab}}\lbe(U,G\e)\simeq\bd^{\e 1}\lbe(U,C\e)$
and $D^{1}_{\rm{ab}}\lbe(U,G^{*})\simeq\bd^{\e 1}\lbe(U,C^{*}\e)$,
where $C=(F\ra R)$. Similarly, there exist isomorphisms
$\Sha^{1}_{\rm{ab}}(K,G\e)\simeq\Sha^{1}(K,C)$ and
$\Sha^{1}_{\rm{ab}}(K,G^{*})\simeq \Sha^{1}(K,C^{*})$, where
$\Sha^{1}(K,C)$ (respectively, $\Sha^{1}(K,C^{*})$) is the group
\eqref{shac}. Thus, it suffices to find a set $W$ as above and
isomorphisms $\bd^{\e 1}\lbe(W,C\e)\simeq \Sha^{1}(K,C)$ and
$\bd^{\e 1}\lbe(W,C^{*}\e)\simeq \Sha^{1}(K,C^{*})$. There exists a
nonempty open subset $U$ of $S$ such that $H^{\le
1}(V_{\rm{\acute{e}t}},R\e)=0$ for every open subset $V$ of $U$
(indeed, $R$ is a finite product of tori of the form
$R_{S^{\e\prime}\be/\lbe S}(\bg_{m,S^{\prime}}\lbe)$ where each
$S^{\e\prime}$ is finite and \'etale over $S$ and contains a
nonempty open subscheme with trivial Picard group). It follows that
the canonical map $\bh^{\e 1}(V_{\e\rm{fl}},C\e)\ra H^{\e
2}\lbe(V_{\rm{\acute{e}t}},F\e)$ is injective (cf. \cite{ga4},
Proposition 4.2). Thus there exists an injection $\bd^{\e
1}\lbe(V,C\e)\hookrightarrow D^{2}(V,F)$. Further, the canonical map
$D^{2}(V,F\e)\ra\Sha^{2}(K,F)$ is injective (see the proof of
Theorem 3.12) and $\Sha^{1}(K,C)\ra \Sha^{2}(K,F)$ is an isomorphism
(cf. Proposition 3.1). Now the commutative diagram
$$
\xymatrix{\bd^{\e 1}\lbe(V,C\e)\ar[d]\ar@{^{(}->}[r]&D^{2}(V,F)\ar@{^{(}->}[d]\\
\Sha^{1}(K,C)\ar[r]^(.48){\simeq}&\Sha^{2}(K,F)}
$$
shows that the canonical map $\bd^{\e 1}\lbe(V,C\e)\ra
\Sha^{1}(K,C)$ is injective. This map is shown to be surjective in
\cite{dem1}, proof of Theorem 5.7, p.22, lines 11-12. Thus, for
every $V\subset U$, the canonical map $\bd^{\e 1}\lbe(V,C\e)\ra
\Sha^{1}(K,C)$ is, in fact, an isomorphism. On the other hand, Lemma
4.8 shows that the map $H^{\le 2}(S_{\rm{\acute{e}t}},R^{*}\e)\ra
H^{\le 2}(K,R^{*}\e)$ is injective and \cite{ga1}, proof of Lemma
6.2, shows that the canonical map $H^{\le
1}(S_{\rm{\acute{e}t}},F^{*}\e)\ra H^{\le 1}(K,F^{*}\e)$ is an
isomorphism. In particular, $H^{\le
1}(S_{\rm{\acute{e}t}},R^{*}\e)\simeq H^{\le 1}(K,R^{*}\e)=0$. Thus
there exists an exact commutative diagram
\[
\xymatrix{1\ar[r]& H^{\le
1}(S_{\rm{\acute{e}t}},F^{*}\e)\ar[d]^{\simeq}\ar[r] & \bh^{\e
1}(S_{\rm{\acute{e}t}},C^{*})\ar[d]
\ar[r] & H^{2}(S_{\rm{\acute{e}t}},R^{*}\e)\ar@{^{(}->}[d]\\
1\ar[r] & H^{\le 1}(K,F^{*}\e)\ar[r] & \bh^{\e 1}(K,C^{*})\ar[r] &
H^{2}(K,R^{*}\e)}
\]
(cf. \cite{ga4}, Proposition 4.2), which shows that there exists an
injection $\bh^{\e 1}(S_{\rm{\acute{e}t}},C^{*})\hookrightarrow
\bh^{\e 1}(K,C^{*})$. In particular, $\bd^{\e 1}(V,C^{*})$ may be
identified with a subgroup of $\bh^{\e 1}(K,C^{*})$. Now the same
arguments used in \cite{dem1}, proofs of Lemma 5.6 and Theorem 5.7,
show that there exist a nonempty open subset $U^{*}$ of $S$ and
isomorphisms $\bd^{\e
1}(V,C^{*})\overset{\!\sim}\ra\Sha^{1}(K,C^{*})$ for every nonempty
open subset $V$ of $\,U^{*}$. Further, these isomorphisms are
compatible with respect to inclusions $V^{\e\prime}\subset V\subset
U^{*}$, in the sense that the diagram
\begin{equation}\label{comp}
\xymatrix{\bd^{\e 1}(V^{\e\prime},C^{*})\ar[d]
\ar[r]^(.47){\sim} & \Sha^{1}(K,C^{*}),\\
\bd^{\e 1}(V,C^{*})\ar[ur]^(.47){\sim} &}
\end{equation}
(whose vertical map is induced by the canonical map $\bh^{\e
1}_{\e\text{c}}(V^{\prime}_{\rm{\acute{e}t}},C^{*}\e)\ra\bh^{\e
1}_{\e\text{c}}(V_{\rm{\acute{e}t}},C^{*}\e)$) commutes. We conclude
that, if $W=U\cap U^{*}$, then there exist isomorphisms $\bd^{\e
1}(V,C)\simeq\Sha^{1}(K,C)$ and $\bd^{\e
1}(V,C^{*})\simeq\Sha^{1}(K,C^{*})$ for every nonempty open subset
$V$ of $W$, as desired.
\end{proof}

\begin{examples}\indent

\begin{enumerate}

\item[(a)] If $G$ is semisimple, the pairing of the proposition is a
pairing
$$
\Sha^{2}(K,\mu)\times \Sha^{1}(K,\mu^{\be *})\ra\bq/\e\bz
$$
which is isomorphic to the natural one, i.e., that induced by the
pairing $\mu\times\mu^{\be *}\ra\bg_{m,S}$ (the so-called
Poitou-Tate pairing of \cite{adt}, Theorem I.4.10(a), p.57, and
\cite{ga1}, Theorem 1.1). See Example 2.1(a).

\item[(b)] If $G$ has trivial fundamental group, then the pairing of the
proposition is isomorphic to the natural pairing
$$
\Sha^{1}(K,G^{\tor})\times \Sha^{2}(K,G^{\tor*})\ra\bq/\e\bz
$$
for the $K$-torus $G^{\tor}\!\times_{S}\be\spec K$. See Example
2.1(b).
\end{enumerate}
\end{examples}

\smallskip

As seen in the proof of Proposition 4.9, the canonical map $\bh^{\e
1}(S_{\rm{\acute{e}t}},C^{*})\ra\bh^{\le 1}(K,C^{*})$ is injective.
Thus, by \cite{ga4}, Proposition 4.2, the canonical map $H^{\le
1}_{\rm{ab}}(S_{\rm{\acute{e}t}}, G^{*})\hookrightarrow H^{\le
1}_{\rm{ab}}(K, G^{*})$ is injective as well. Its image
\begin{equation}\label{abnr}
H^{\le 1}_{\rm{ab,nr}}(K, G^{*})=\img\!\lbe\left[\e H^{\le
1}_{\rm{ab}}(S_{\rm{\acute{e}t}}, G^{*})\ra H^{\le 1}_{\rm{ab}}(K,
G^{*})\e\right]
\end{equation}
is called {\it the subgroup of ($S$-)unramified classes of $H^{\le
1}_{\rm{ab}}(K, G^{*})$}. The following statement, which will be
used in the next Section, is immediate from \eqref{d'}.
\begin{proposition} The canonical map $H^{\le
1}_{\rm{ab}}(S_{\rm{\acute{e}t}}, G^{*})\ra H^{\le 1}_{\rm{ab}}(K,
G^{*})$ induces an isomorphism
$$
D^{1}_{\rm{ab}}\lbe(S,G^{*})\simeq H^{\le 1}_{\rm{ab,nr}}(K,
G^{*})\cap\krn\!\!\left[H^{\le 1}_{\rm{ab}}(K,
G^{*})\ra\displaystyle\prod_{v\le\in\le\Sigma}H^{\le
1}_{\rm{ab}}(K_{v}, G^{*})\right].\qed
$$
\end{proposition}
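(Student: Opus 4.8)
The plan is to read the statement off directly from the definition \eqref{d'}, combined with the injectivity result recorded in the paragraph immediately preceding the proposition. First I would note that, by that injectivity together with the definition \eqref{abnr}, the canonical map $\iota\colon H^{\le 1}_{\rm{ab}}(S_{\rm{\acute{e}t}}, G^{*})\ra H^{\le 1}_{\rm{ab}}(K, G^{*})$ is an isomorphism onto its image $H^{\le 1}_{\rm{ab,nr}}(K, G^{*})$. Hence the task reduces to identifying the image $\iota\big(D^{1}_{\rm{ab}}(S,G^{*})\big)$ as a subgroup of $H^{\le 1}_{\rm{ab,nr}}(K, G^{*})$.

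The key observation is that, for each $v\in\Sigma$, the localization map $H^{\le 1}_{\rm{ab}}(S_{\rm{\acute{e}t}}, G^{*})\ra H^{\le 1}_{\rm{ab}}(K_{v}, G^{*})$ occurring in \eqref{d'} is by construction induced by the composite $\spec K_{v}\ra\spec K\ra S$, and therefore factors as $\iota$ followed by the $v$-adic localization $H^{\le 1}_{\rm{ab}}(K, G^{*})\ra H^{\le 1}_{\rm{ab}}(K_{v}, G^{*})$. Consequently a class $x\in H^{\le 1}_{\rm{ab}}(S_{\rm{\acute{e}t}}, G^{*})$ lies in $D^{1}_{\rm{ab}}(S,G^{*})$ precisely when $\iota(x)$ dies in $H^{\le 1}_{\rm{ab}}(K_{v}, G^{*})$ for every $v\in\Sigma$, i.e. precisely when $\iota(x)$ lies in $\krn\big[\,H^{\le 1}_{\rm{ab}}(K, G^{*})\ra\prod_{v\in\Sigma}H^{\le 1}_{\rm{ab}}(K_{v}, G^{*})\,\big]$. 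Since $\iota(x)$ belongs to $H^{\le 1}_{\rm{ab,nr}}(K, G^{*})$ by definition, the isomorphism $\iota$ carries $D^{1}_{\rm{ab}}(S,G^{*})$ bijectively onto the intersection asserted in the statement, which is exactly the claim.

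I do not anticipate a genuine obstacle: the entire content is the factorization of the $v$-adic localization through restriction to the generic point, which is built into the way the maps in \eqref{d'} and \eqref{abnr} are defined. The only point deserving a moment's care is to confirm that the two families of localization maps---the one defining $D^{1}_{\rm{ab}}(S,G^{*})$ over $S$ and the one cutting out the Shafarevich-type kernel over $K$---are compatible through $\iota$, and this is just the functoriality of abelian cohomology along $\spec K_{v}\ra\spec K\ra S$.
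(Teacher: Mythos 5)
Your proposal is correct and matches the paper's own argument: the paper states the proposition as immediate from the definition \eqref{d'} together with the injectivity of $H^{\le 1}_{\rm{ab}}(S_{\rm{\acute{e}t}},G^{*})\hookrightarrow H^{\le 1}_{\rm{ab}}(K,G^{*})$ established just before \eqref{abnr}, which is exactly what you spell out. The factorization of the localization maps through $\spec K_{v}\ra\spec K\ra S$ that you invoke is indeed built into the definitions, so there is no gap.
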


We now recall the exact sequence \eqref{secuab}:
$$
1\ra C_{\rm{ab}}(G\e)\ra
D^{1}_{\rm{ab}}\lbe(S,G\e)\ra\Sha^{1}_{\rm{ab}}\lbe(K,G\e) \ra 1.
$$
By Propositions 4.6 and 4.9, the dual of the preceding exact
sequence of finite abelian groups is an exact sequence
$$
1\ra \Sha^{1}_{\rm{ab}}\lbe(K,G^{*})\ra
D^{1}_{\rm{ab}}\lbe(S,G^{*})\ra C_{\rm{ab}}(G\e)^{D}\ra 1.
$$
Thus the following holds.

\begin{theorem} The pairings of Propositions 4.6 and 4.9 induce a perfect pairing
of finite groups
$$
C_{\rm{ab}}(G\e)\times
D^{1}_{\rm{ab}}\lbe(S,G^{*})/\Sha^{1}_{\rm{ab}}\lbe(K,G^{*})\ra\bq/\bz.
$$
In other words, the exact annihilator of $C_{\rm{ab}}(G\e)\subset
D^{1}_{\rm{ab}}\lbe(S,G\e)$ under the pairing of Proposition 4.6 is
the group $\Sha^{1}_{\rm{ab}}\lbe(K,G^{*})\subset
D^{1}_{\rm{ab}}\lbe(S,G^{*})$.\qed
\end{theorem}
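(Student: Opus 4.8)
The plan is to read the theorem off the short exact sequence \eqref{secuab} by applying Pontryagin duality and then substituting the two perfect pairings of Propositions 4.6 and 4.9. First I would record that all three groups in \eqref{secuab} are finite abelian: $D^{1}_{\rm{ab}}(S,G\e)$ is finite by Proposition 4.6 (equivalently by the finiteness statement in Proposition 4.1), $\Sha^{1}_{\rm{ab}}(K,G\e)$ is finite by Proposition 4.9, and hence so is the subgroup $C_{\rm{ab}}(G\e)$. Since $(-)^{D}=\hom(-,\bq/\bz)$ is exact on finite abelian groups, applying it to \eqref{secuab} gives an exact sequence
\[
1\ra\Sha^{1}_{\rm{ab}}(K,G\e)^{D}\ra D^{1}_{\rm{ab}}(S,G\e)^{D}\ra C_{\rm{ab}}(G\e)^{D}\ra 1,
\]
whose first nontrivial map is the transpose of the surjection $D^{1}_{\rm{ab}}(S,G\e)\onto\Sha^{1}_{\rm{ab}}(K,G\e)$ of \eqref{secuab}.

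Next I would substitute the identifications $D^{1}_{\rm{ab}}(S,G\e)^{D}\simeq D^{1}_{\rm{ab}}(S,G^{*})$ (Proposition 4.6) and $\Sha^{1}_{\rm{ab}}(K,G\e)^{D}\simeq\Sha^{1}_{\rm{ab}}(K,G^{*})$ (Proposition 4.9). Granting for the moment that, under these isomorphisms, the transpose above becomes the natural inclusion $\Sha^{1}_{\rm{ab}}(K,G^{*})\hookrightarrow D^{1}_{\rm{ab}}(S,G^{*})$ of \eqref{d'} (cf. Proposition 4.11), the dual sequence becomes
\[
1\ra\Sha^{1}_{\rm{ab}}(K,G^{*})\ra D^{1}_{\rm{ab}}(S,G^{*})\ra C_{\rm{ab}}(G\e)^{D}\ra 1,
\]
so that $C_{\rm{ab}}(G\e)^{D}\simeq D^{1}_{\rm{ab}}(S,G^{*})/\Sha^{1}_{\rm{ab}}(K,G^{*})$. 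As $C_{\rm{ab}}(G\e)$ is finite, biduality then delivers the stated perfect pairing, and the ``exact annihilator'' reformulation is merely this statement read off the identification $C_{\rm{ab}}(G\e)=\krn\!\be\left[D^{1}_{\rm{ab}}(S,G\e)\ra\Sha^{1}_{\rm{ab}}(K,G\e)\right]$.

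The one step that genuinely requires care — and which I expect to be the main obstacle — is exactly the parenthetical compatibility granted above: that the two pairings, built separately in Propositions 4.6 and 4.9, match up along the restriction map so that its transpose is the natural inclusion of $\Sha^{1}_{\rm{ab}}(K,G^{*})$. I would reduce this to a single containment, $\Sha^{1}_{\rm{ab}}(K,G^{*})\subseteq{\rm{Ann}}\big(C_{\rm{ab}}(G\e)\big)$ under the pairing of Proposition 4.6. The reverse inclusion is then automatic by counting: the perfect pairing of Proposition 4.6 forces $\#{\rm{Ann}}\big(C_{\rm{ab}}(G\e)\big)=\#D^{1}_{\rm{ab}}(S,G\e)/\#C_{\rm{ab}}(G\e)=\#\Sha^{1}_{\rm{ab}}(K,G\e)=\#\Sha^{1}_{\rm{ab}}(K,G^{*})$, the last equality being the perfect pairing of Proposition 4.9.

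To prove the containment I would exploit that both pairings descend from the single hypercohomology pairing \eqref{p1} for the complex $C=(F\ra R)$ attached to a flasque resolution of $G$. A class $a\in C_{\rm{ab}}(G\e)$ dies under restriction to the generic fibre, hence restricts to $0$ over some nonempty open $W\subseteq S$; meanwhile, by the proof of Proposition 4.9, any class of $\Sha^{1}_{\rm{ab}}(K,G^{*})$ is realized through $\bd^{\e 1}(W,C^{*})$, i.e.\ lifts to $\bh^{\e 1}_{\e\text{c}}(W_{\rm{\acute{e}t}},C^{*})$ compatibly with the transition maps of diagram \eqref{comp}. Invoking the functoriality of the compact-support pairing with respect to the open immersion $W\hookrightarrow S$ (in the style of \cite{adt}, Proposition III.0.4, and precisely as already used in the proof of Proposition 4.9) then yields $\langle a,b^{*}\rangle=0$ for every such $b^{*}$, which is the desired vanishing. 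With this compatibility established, the dualization of the first two paragraphs completes the proof.
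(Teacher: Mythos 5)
Your proof is correct and takes essentially the same route as the paper: the paper's entire argument is that, by Propositions 4.6 and 4.9, the Pontryagin dual of the exact sequence \eqref{secuab} of finite abelian groups is the exact sequence $1\ra \Sha^{1}_{\rm{ab}}\lbe(K,G^{*})\ra D^{1}_{\rm{ab}}\lbe(S,G^{*})\ra C_{\rm{ab}}(G\e)^{D}\ra 1$, from which the theorem follows at once. The compatibility you isolate as the delicate step (that the transpose of $D^{1}_{\rm{ab}}\lbe(S,G\e)\onto\Sha^{1}_{\rm{ab}}\lbe(K,G\e)$ becomes the natural inclusion of $\Sha^{1}_{\rm{ab}}\lbe(K,G^{*})$) is precisely what the paper leaves implicit, and your annihilator-containment-plus-counting argument --- pairing a class of $C_{\rm{ab}}(G\e)$, which vanishes on a small open $W$, against classes lifted to compact-support cohomology over $W$ via the projection formula --- is a sound way to supply that detail.
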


By Examples 4.7 and 4.10, the following statements are immediate
consequences of the theorem.
\begin{corollary} Let $G$ be a semisimple $S$-group scheme with
fundamental group $\mu$. Then there exists a perfect pairing of
finite groups
$$
C_{\rm{ab}}(G\e)\times D^{1}\lbe(S,\mu^{\lbe
*})/\Sha^{1}\lbe(K,\mu^{\lbe *})\ra\bq/\bz.\qed
$$
\end{corollary}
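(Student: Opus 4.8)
The plan is to obtain this as a direct specialization of Theorem 4.12 to the case $G^{\tor}=0$. Since $G$ is semisimple, Example 2.1(a) supplies natural identifications $H^{\le i}_{\rm{ab}}(S_{\e\rm{fl}},G\e)\simeq H^{\le i+1}(S_{\e\rm{fl}},\mu\e)$ and $H^{\le i}_{\rm{ab}}(S_{\e\rm{\acute{e}t}},G^{*})\simeq H^{\le i}(S_{\e\rm{\acute{e}t}},\mu^{\le *})$, together with their analogues over $K$ and over each completion $K_{v}$. First I would feed these into the three groups occurring in Theorem 4.12. The left-hand factor $C_{\rm{ab}}(G\e)$ is left untouched. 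For the right-hand factor, Example 4.7(a) records exactly the identification $D^{1}_{\rm{ab}}\lbe(S,G^{*})\simeq D^{1}\lbe(S,\mu^{\le *})$ arising from Proposition 4.6, while Example 4.10(a) records the identification $\Sha^{1}_{\rm{ab}}\lbe(K,G^{*})\simeq \Sha^{1}\lbe(K,\mu^{\le *})$ arising from Proposition 4.9.

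The only point needing a word of care is that these two identifications be compatible with the inclusion $\Sha^{1}_{\rm{ab}}\lbe(K,G^{*})\hookrightarrow D^{1}_{\rm{ab}}\lbe(S,G^{*})$ produced by the dual of \eqref{secuab}, so that the quotient appearing in Theorem 4.12 is carried isomorphically onto $D^{1}\lbe(S,\mu^{\le *})/\Sha^{1}\lbe(K,\mu^{\le *})$. This compatibility is forced by the naturality of the isomorphism $H^{\le i}_{\rm{ab}}(S_{\e\rm{\acute{e}t}},G^{*})\simeq H^{\le i}(S_{\e\rm{\acute{e}t}},\mu^{\le *})$ of Example 2.1(a) with respect to the localization maps at the primes of $\Sigma$ and at all primes, since it is precisely these localization maps whose kernels cut out $D^{1}$ and $\Sha^{1}$ on each side. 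Substituting the identifications into the perfect pairing of Theorem 4.12 then yields the asserted perfect pairing of finite groups
$$
C_{\rm{ab}}(G\e)\times D^{1}\lbe(S,\mu^{\lbe *})/\Sha^{1}\lbe(K,\mu^{\lbe *})\ra\bq/\bz.
$$

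Because the statement is a purely formal specialization, there is no genuine obstacle to overcome: the single verification is the compatibility of identifications noted above, and that is immediate from the functoriality already built into Example 2.1(a). I would accordingly present the proof in a single sentence, citing Theorem 4.12 together with Examples 4.7(a) and 4.10(a).
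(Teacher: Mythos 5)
Your proposal is correct and matches the paper's own proof, which likewise deduces the corollary as an immediate consequence of Theorem 4.12 by specializing via Examples 4.7(a) and 4.10(a) (i.e.\ the identifications $D^{1}_{\rm{ab}}(S,G^{*})\simeq D^{1}(S,\mu^{*})$ and $\Sha^{1}_{\rm{ab}}(K,G^{*})\simeq\Sha^{1}(K,\mu^{*})$ coming from Example 2.1(a)). The compatibility point you flag is indeed the only thing to check, and it holds by the naturality you describe; the paper treats it as implicit.
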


\begin{corollary} Let $T$ be an $S$-torus. Then there exists a perfect pairing of
finite groups
$$
C(T)\times D^{2}\lbe(S,T^{*})/\Sha^{2}\lbe(K,T^{*})\ra\bq/\bz.\qed
$$
\end{corollary}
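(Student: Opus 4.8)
The plan is to obtain this as the torus case of Theorem 4.12, so that the whole argument reduces to unwinding the definitions of the abelian cohomology groups when the fundamental group vanishes. First I would regard the $S$-torus $T$ as a reductive $S$-group scheme; since $T^{\der}=1$, both its simply-connected central cover $\Gtil$ and its fundamental group $\mu$ are trivial, and $T^{\tor}=T$. With $\mu=0$, Example 3.9(b) identifies the first factor of the pairing: $C_{\rm{ab}}(T\e)=C(T^{\tor})=C(T)$.

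Next I would apply Theorem 4.12 to $G=T$, which yields a perfect pairing of finite groups
$$
C_{\rm{ab}}(T\e)\times D^{1}_{\rm{ab}}(S,T^{*})/\Sha^{1}_{\rm{ab}}(K,T^{*})\ra\bq/\bz .
$$
It then remains only to rewrite the second factor in terms of ordinary cohomology. Because $\mu=0$, the exact sequence \eqref{kamb*} degenerates to the index shift $H^{\le i}_{\rm{ab}}(S_{\e\rm{\acute{e}t}},T^{*})=H^{\le i+1}(S_{\e\rm{\acute{e}t}},T^{*})$ of Example 2.1(b), and similarly over each completion $K_{v}$ and over $K$. Taking $i=1$ and passing to the appropriate kernels over the primes in $\Sigma$ (respectively, over all primes) therefore gives $D^{1}_{\rm{ab}}(S,T^{*})=D^{2}(S,T^{*})$ and $\Sha^{1}_{\rm{ab}}(K,T^{*})=\Sha^{2}(K,T^{*})$ --- precisely the identifications already recorded in Examples 4.7(b) and 4.10(b). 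Substituting these into the displayed pairing produces the asserted perfect pairing
$$
C(T)\times D^{2}(S,T^{*})/\Sha^{2}(K,T^{*})\ra\bq/\bz .
$$

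I do not expect a genuine obstacle, since the statement is an immediate specialization of Theorem 4.12. The only point deserving care is the degree shift in the dual abelian cohomology: one should verify that the identification $H^{\le 1}_{\rm{ab}}(-,T^{*})=H^{\le 2}(-,T^{*})$ is compatible with the localization maps defining the groups $D^{2}$ and $\Sha^{2}$, so that the perfectness and finiteness transported from Theorem 4.12 indeed become the classical duality pairing for the torus $T$ and its character group $T^{*}$.
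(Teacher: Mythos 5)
Your proposal is correct and follows exactly the paper's own route: Corollary 4.14 is obtained there as an immediate specialization of Theorem 4.12, using Example 3.9(b) to identify $C_{\rm{ab}}(T)=C(T)$ and Examples 4.7(b) and 4.10(b) (which rest on the degree shift of Example 2.1(b)) to identify the dual factor. The compatibility with localization maps that you flag as the one point of care is precisely what those examples record, so nothing further is needed.
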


\smallskip

\begin{remark} If $T$ is an $S$-torus such that
$\Sha^{1}_{S}(K,T)=0$, then $\Sha^{1}(K,T)\subset \Sha^{1}_{S}(K,T)$
is zero as well, whence $\Sha^{2}\lbe(K,T^{*})\simeq
\Sha^{1}(K,T)^{D}=0$. On the other hand,
$H^{1}(S_{\rm{\acute{e}t}},T)\ra\prod_{\e
v\le\in\le\Sigma}H^{1}(K_{v},T)$ is the zero map (see diagram
\eqref{abdiag2}), whence its dual $\prod_{\e
v\le\in\le\Sigma}H^{1}(K_{v},T^{*})\ra H_{\rm{c}}^{\le
2}(S_{\rm{\acute{e}t}},T^{*})$ is also zero. Thus the map
$H_{\rm{c}}^{\le 2}(S_{\rm{\acute{e}t}},T^{*})\ra H^{\le
2}(S_{\rm{\acute{e}t}},T^{*})$ is injective and consequently
$D^{2}\lbe(S,T^{*})\simeq H_{\rm{c}}^{\le
2}(S_{\rm{\acute{e}t}},T^{*})$. Therefore the corollary yields a
perfect pairing $C(T)\times H_{\rm{c}}^{\le
2}(S_{\rm{\acute{e}t}},T^{*})\ra\bq/\bz$ which is isomorphic to the
pairing \eqref{pair}.
\end{remark}

Let
\begin{equation}\label{deltas}
\delta_{S}\colon H_{\rm{ab,\e c}}^{\le
1}(S_{\rm{\acute{e}t}},G^{*})\ra\prod_{\e v\in S_{0}}\be
H_{\rm{ab}}^{\le 1}(\ove,G^{*})
\end{equation}
be the canonical map (see \eqref{cov}). Then we have the following
alternative description of $\e
D^{1}_{\rm{ab}}\lbe(S,G^{*})/\Sha^{1}_{\rm{ab}}\lbe(K,G^{*})\simeq
C_{\rm{ab}}(G\e)^{D}$.

\begin{proposition} There exists a canonical isomorphism of finite
groups
$$
D^{1}_{\rm{ab}}\lbe(S,G^{*})/\Sha^{1}_{\rm{ab}}\lbe(K,G^{*})\simeq\img\delta_{S},
$$
where $\delta_{S}$ is the map \eqref{deltas}.
\end{proposition}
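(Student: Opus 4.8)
The plan is to exhibit $\img\delta_{S}$ as the image of $D^{1}_{\rm{ab}}\lbe(S,G^{*})$ under restriction to the closed points of $S$, and then to identify the kernel of that restriction with $\Sha^{1}_{\rm{ab}}\lbe(K,G^{*})$. Write $\alpha\colon H^{1}_{\rm{ab,\e c}}(S_{\e\rm{\acute{e}t}},G^{*})\ra H^{1}_{\rm{ab}}(S_{\e\rm{\acute{e}t}},G^{*})$ for the map appearing in \eqref{cs*}, so that $D^{1}_{\rm{ab}}\lbe(S,G^{*})=\img\alpha$ by \eqref{d'}, and let $r\colon H^{1}_{\rm{ab}}(S_{\e\rm{\acute{e}t}},G^{*})\ra\prod_{v\in S_{0}}H^{1}_{\rm{ab}}(\ove,G^{*})$ be the product of the restriction maps. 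The first point to check is that $\delta_{S}=r\circ\alpha$: this is the analogue, for the complex $(Z(G)^{*}\ra Z\big(\Gtil\e\big)^{*})$, of the standard compatibility between the ``forget support'' map and localization in the theory of cohomology with compact support (cf. \cite{adt}, Proposition III.0.4 and Remark III.0.6), and it holds because the map of \eqref{cov} attached to $V=S\setminus\{v\}$ is, by construction, the composite of $\alpha$ with restriction to $\ove$. Granting this, $\img\delta_{S}=r\big(\img\alpha\big)=r\big(D^{1}_{\rm{ab}}\lbe(S,G^{*})\big)$, and it remains to compute the kernel of $r$ on $D^{1}_{\rm{ab}}\lbe(S,G^{*})$.

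For the kernel I argue as follows. By Theorem 4.12 we already have $\Sha^{1}_{\rm{ab}}\lbe(K,G^{*})\subseteq D^{1}_{\rm{ab}}\lbe(S,G^{*})$, and by the discussion preceding Proposition 4.11 the map $H^{1}_{\rm{ab}}(S_{\e\rm{\acute{e}t}},G^{*})\into H^{1}_{\rm{ab}}(K,G^{*})$ is injective; via this injection Proposition 4.11 identifies $D^{1}_{\rm{ab}}\lbe(S,G^{*})$ with the unramified classes that vanish in $\prod_{v\in\Sigma}H^{1}_{\rm{ab}}(K_{v},G^{*})$. Let $x\in D^{1}_{\rm{ab}}\lbe(S,G^{*})$ have image $\bar{x}\in H^{1}_{\rm{ab}}(K,G^{*})$. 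If $r(x)=0$ then, since each localization $\bar{x}_{v}$ ($v\in S_{0}$) factors through $H^{1}_{\rm{ab}}(\ove,G^{*})$, we get $\bar{x}_{v}=0$ for all $v\in S_{0}$; as $\bar{x}$ already dies at every $v\in\Sigma$ and $\Sigma\sqcup S_{0}$ is the set of all primes of $K$, we conclude $\bar{x}\in\Sha^{1}_{\rm{ab}}\lbe(K,G^{*})$. Conversely, if $x\in\Sha^{1}_{\rm{ab}}\lbe(K,G^{*})$ then $\bar{x}_{v}=0$ in $H^{1}_{\rm{ab}}(K_{v},G^{*})$ for every $v\in S_{0}$, and the injectivity of $H^{1}_{\rm{ab}}(\ove,G^{*})\into H^{1}_{\rm{ab}}(K_{v},G^{*})$ then forces $r(x)=0$. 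Hence $\krn\big(r|_{D^{1}_{\rm{ab}}(S,G^{*})}\big)=\Sha^{1}_{\rm{ab}}\lbe(K,G^{*})$, and combining with the previous paragraph yields the asserted isomorphism $D^{1}_{\rm{ab}}\lbe(S,G^{*})/\Sha^{1}_{\rm{ab}}\lbe(K,G^{*})\isoto\img\delta_{S}$.

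The one ingredient not yet in hand is the local injectivity $H^{1}_{\rm{ab}}(\ove,G^{*})\into H^{1}_{\rm{ab}}(K_{v},G^{*})$ for $v\in S_{0}$, which I expect to be the main point requiring work. I would establish it as the local counterpart of Lemma 4.8 and of the injectivity used before Proposition 4.11: choosing a flasque resolution gives $H^{1}_{\rm{ab}}(\ove,G^{*})\simeq\bh^{1}(\ove_{\e\rm{\acute{e}t}},C^{*})$ with $C^{*}=(R^{*}\ra F^{*})$, and the hypercohomology filtration of $C^{*}$ reduces the claim to the injectivity of $H^{1}(\ove,F^{*})\ra H^{1}(K_{v},F^{*})$ and of $H^{2}(\ove,R^{*})\ra H^{2}(K_{v},R^{*})$ for the lattices $F^{*}$ and $R^{*}$ over the Henselian discrete valuation ring $\ove$ with finite residue field; these are the evident local analogues of \cite{ga1}, Lemma 6.2, and Lemma 4.8, and follow from the same unramified-cohomology argument. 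Finally, the isomorphism is canonical: every map used, namely $\alpha$, $r$, and the identifications furnished by Theorem 4.12 and Proposition 4.11, is canonical, and the flasque resolution enters only through the canonical isomorphisms of \cite{ga4}, Proposition 4.2.
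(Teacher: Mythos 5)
Your argument is correct in substance, but it is genuinely different from the paper's proof. The paper never considers the restriction map $r$ to the rings $\ove$ at all. Instead it recycles the output of the proof of Proposition 4.9: for every nonempty open $V$ contained in the set $W$ constructed there, one has $D^{1}_{\rm{ab}}\lbe(V,G^{*})\simeq\Sha^{1}_{\rm{ab}}\lbe(K,G^{*})$, compatibly with shrinking $V$; the kernel--cokernel exact sequence of the pair \eqref{pair2}, combined with the presentation \eqref{cock} of $D^{1}_{\rm{ab}}\lbe(S,G^{*})$ as $\cok\beta_{S}$, yields the short exact sequence \eqref{kool}, $1\ra D^{1}_{\rm{ab}}\lbe(V,G^{*})\ra D^{1}_{\rm{ab}}\lbe(S,G^{*})\ra\cok\gamma_{V\!,S}\ra 1$; the exactness of \eqref{cov} identifies $\cok\gamma_{V\!,S}$ with $\img\e\delta_{V\!,S}$; and the conclusion follows by passing to the inverse limit over $V$, using that $\varprojlim$ is exact on finite abelian groups. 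Your route replaces all of this by the direct computation $\img\e\delta_{S}=r(\img\alpha)$ together with $\krn\be\big(r|_{D^{1}_{\rm{ab}}(S,G^{*})}\big)=\Sha^{1}_{\rm{ab}}\lbe(K,G^{*})$. What you gain is transparency: the isomorphism is exhibited concretely as ``restrict to the closed points of $S$,'' which also makes canonicity evident. What the paper's route gains is economy of inputs: it uses only the exactness and functoriality in $V$ of \eqref{cov} and \eqref{cs*}, which are already in play, and it needs neither of the two extra ingredients on which your proof rests.

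Those two ingredients are real, and you correctly identify where the work lies, but both deserve more than the one line each gets. First, the compatibility $\delta_{S}=r\circ\alpha$ is true, but ``by construction'' is doing a lot of work: the paper never spells out the construction of \eqref{cov}, so one must actually verify that its third map is forget-supports followed by restriction. For the standard construction this holds because the component at $v$ is induced by the adjunction morphism to the pushforward of the fiber at $v$; restricting along $\spec\ov\ra S$ and invoking the invariance of \'etale cohomology for the Henselian local ring $\ov$ converts this into exactly $r\circ\alpha$. Second, the local injectivity $H_{\rm{ab}}^{\le 1}(\ove,G^{*})\into H^{\le 1}_{\rm{ab}}(K_{v},G^{*})$ is nowhere stated in the paper for general reductive $G$; the paper establishes it only in the semisimple and toral cases (in the text preceding Corollaries 4.18 and 4.19, citing \cite{adt}, III, Lemma 1.1(a), and \cite{dem1}, Lemma 3.2). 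Your reduction via a flasque resolution is the right strategy, but the diagram chase on the hypercohomology sequence of $C^{*}=(R^{*}\ra F^{*})$ needs, besides the injectivity of $H^{\le 1}(\ove,F^{*})\ra H^{\le 1}(K_{v},F^{*})$ and of $H^{\le 2}(\ove,R^{*})\ra H^{\le 2}(K_{v},R^{*})$, also the vanishing $H^{\le 1}(K_{v},R^{*})=0$ (true, $R^{*}$ being a permutation module); all three then follow from the fact that $F^{*}$ and $R^{*}$ are unramified lattices at $v$, so that inflation is injective in degrees $1$ and $2$. Finally, a small point of bookkeeping: the inclusion $\Sha^{1}_{\rm{ab}}\lbe(K,G^{*})\subset D^{1}_{\rm{ab}}\lbe(S,G^{*})$ you import from Theorem 4.12 arises there by duality; to use it inside $H^{\le 1}_{\rm{ab}}(K,G^{*})$ as you do, one should note that it agrees with the natural containment, which follows from the proof of Proposition 4.9, where $\Sha^{1}_{\rm{ab}}\lbe(K,G^{*})$ is identified with $D^{1}_{\rm{ab}}\lbe(V,G^{*})$ as a subgroup of $H^{\le 1}_{\rm{ab}}(K,G^{*})$. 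With these points filled in, your proof is complete, and arguably more illuminating than the limit argument.
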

\begin{proof} Recall the set $W$
introduced in the proof of Proposition 4.9. For every nonempty open
subset $V$ of $W$, there exist isomorphisms
$D^{1}_{\rm{ab}}\lbe(V,G^{*})\overset{\sim}\ra\Sha^{1}_{\rm{ab}}\lbe(K,G^{*})$
which are compatible with respect to inclusions $V^{\e\prime}\subset
V\subset W$, i.e., the following diagram commutes
\begin{equation}\label{comp2}
\xymatrix{D^{1}_{\rm{ab}}\lbe(V^{\prime},G^{*})\ar[d]
\ar[r]^(.47){\sim} & \Sha^{1}_{\rm{ab}}\lbe(K,G^{*}).\\
D^{1}_{\rm{ab}}\lbe(V,G^{*})\ar[ur]^(.47){\sim} &}
\end{equation}
Here the vertical arrow is induced by $\bh^{\e 1}_{\e\rm{ab,\e
c}}(V^{\prime}_{\rm{\acute{e}t}},G^{*})\ra\bh^{\e 1}_{\e\rm{ab,\e
c}}(V_{\rm{\acute{e}t}},G^{*})$ (see \eqref{comp}). On the other
hand, for each $V$ the composition
$D^{1}_{\rm{ab}}\lbe(V,G^{*})\overset{\sim}\ra\Sha^{1}_{\rm{ab}}\lbe(K,G^{*})
\hookrightarrow D^{1}_{\rm{ab}}\lbe(S,G^{*})$ is induced by the
canonical map $\bh^{\e 1}_{\e\rm{ab,\e
c}}(V_{\rm{\acute{e}t}},G^{*})\ra\bh^{\e 1}_{\e\rm{ab,\e
c}}(S_{\rm{\acute{e}t}},G^{*})$, and these maps fit into a
commutative diagram
\begin{equation}\label{comp3}
\xymatrix{D^{1}_{\rm{ab}}\lbe(V^{\prime},G^{*})\,\ar[d]
\ar@{^{(}->}[r] & D^{1}_{\rm{ab}}\lbe(S,G^{*}).\\
D^{1}_{\rm{ab}}\lbe(V,G^{*})\,\ar@{^{(}->}[ur] &}
\end{equation}
We will now compute the cokernel of the map
$D^{1}_{\rm{ab}}\lbe(V,G^{*})\hookrightarrow
D^{1}_{\rm{ab}}\lbe(S,G^{*})$ for any nonempty open subset $V$ of
$W$. Since $D^{1}_{\rm{ab}}\lbe(S,G^{*})=\img\e[\e H_{\rm{ab,\e
c}}^{\le 1}(S_{\rm{\acute{e}t}},G^{*})\ra H_{\rm{ab}}^{\le
1}(S_{\rm{\acute{e}t}},G^{*})]$ by \eqref{d'}, \eqref{cs*} induces
an isomorphism
\begin{equation}\label{cock}
\cok\!\!\left[\,\,\displaystyle\prod_{v\le\in\le\Sigma}H_{\rm{ab}}^{\le
0}(K_{v},G^{*})\overset{\beta_{\be S}}\longrightarrow H_{\rm{ab,\e
c}}^{\le 1}(S_{\rm{\acute{e}t}},G^{*})\right]\simeq
D^{1}_{\rm{ab}}\lbe(S,G^{*}),
\end{equation}
and similarly over $V$. Now consider the pairs of maps
\begin{equation}\label{pair1}
\displaystyle\prod_{v\notin V}H_{\rm{ab}}^{\le
0}(K_{v},G^{*})\overset{\alpha_{_{V,S}}}\twoheadrightarrow
\displaystyle\prod_{v\le\in\le\Sigma}H_{\rm{ab}}^{\le
0}(K_{v},G^{*})\overset{\beta_{_{\be S}}}\longrightarrow
H_{\rm{ab,\e c}}^{\le 1}(S_{\rm{\acute{e}t}},G^{*})
\end{equation}
and
\begin{equation}\label{pair2}
\displaystyle\prod_{v\notin V}H_{\rm{ab}}^{\le
0}(K_{v},G^{*})\overset{\beta_{_{\lbe V}}}\longrightarrow
H_{\rm{ab,\e c}}^{\le
1}(V_{\rm{\acute{e}t}},G^{*})\overset{\gamma_{_{
V\!,S}}}\longrightarrow H_{\rm{ab,\e c}}^{\le
1}(S_{\rm{\acute{e}t}},G^{*}),
\end{equation}
which satisfy
$\beta_{S}\circ\alpha_{V,S}=\gamma_{V\!,S}\circ\beta_{V}$. Then
\eqref{pair1} and \eqref{cock} show that
$$
\cok(\gamma_{_{ V\!,S}}\circ\beta_{_{V}})=\cok(\e\beta_{_{\lbe
S}}\circ\alpha_{_{V,S}})=\cok \beta_{_{\lbe S}}\simeq
D^{1}_{\rm{ab}}\lbe(S,G^{*}).
$$
Now \eqref{cock} over $V$, the kernel-cokernel exact sequence of the
pair of maps \eqref{pair2} (see \cite{adt}, Proposition I.0.24,
p.16), and the injectivity of $D^{1}_{\rm{ab}}\lbe(V,G^{*})\ra
D^{1}_{\rm{ab}}\lbe(S,G^{*})$ yield an exact sequence
\begin{equation}\label{kool}
1\ra D^{1}_{\rm{ab}}\lbe(V,G^{*})\ra D^{1}_{\rm{ab}}\lbe(S,G^{*})\ra
\cok\gamma_{_{\e V,S}}\ra 1.
\end{equation}
Finally, we partially order the family of nonempty open subsets $V$
of $S$ by setting $V\leq V^{\prime}\Leftrightarrow V^{\prime}\subset
V$. Then, by the commutativity of \eqref{comp3}, \eqref{kool} is an
exact sequence of inverse systems of finite abelian groups. By
\eqref{comp2},
\begin{equation}\label{inv1}
\displaystyle\varprojlim_{V\subset
S}D^{1}_{\rm{ab}}\lbe(V,G^{*})\simeq
\Sha^{1}_{\rm{ab}}\lbe(K,G^{*}).
\end{equation}
On the other hand, by \eqref{cov}, there exists a canonical
isomorphism
$$
\cok\gamma_{_{\e V,S}}\simeq\img\!\!\left[\e H_{\rm{ab,\e c}}^{\le
1}(S_{\rm{\acute{e}t}},G^{*})\overset{\delta_{_{
V\!,S}}}\longrightarrow \displaystyle\prod_{v\in S\e\setminus
V}H_{\rm{ab}}^{\le 1}(\ove,G^{*})\right],
$$
where the map $\delta_{_{ V\!,S}}$ fits into a commutative diagram
$$
\xymatrix{H_{\rm{ab,\e c}}^{\le
1}(S_{\rm{\acute{e}t}},G^{*})\ar[dr]_(.4){\delta_{_{ V\!,S}}}
\ar[r]^(.47){\delta_{_{\be S}}} & \displaystyle\prod_{v\in
S_{0}}H_{\rm{ab}}^{\le
1}(\ove,G^{*})\ar[d],\\
&\displaystyle\prod_{v\in S\e\setminus V}H_{\rm{ab}}^{\le
1}(\ove,G^{*}).}
$$
Consequently
\begin{equation}\label{inv2}
\displaystyle\varprojlim_{V\subset S}\cok\gamma_{_{\e
V,S}}\simeq\displaystyle\varprojlim_{V\subset S}\delta_{_{
V\!,S}}=\img\e\delta_{S}.
\end{equation}
Thus, by \eqref{inv1} and \eqref{inv2}, the inverse limit of
\eqref{kool} is an exact sequence\footnote{Recall that the inverse
limit functor is exact on the category of finite abelian groups by
\cite{jen}, Proposition 2.3, p.14.}
$$
1\ra \Sha^{1}_{\rm{ab}}\lbe(K,G^{*})\ra
D^{1}_{\rm{ab}}\lbe(S,G^{*})\ra\img\e\delta_{S}\ra 1,
$$
which completes the proof.
\end{proof}

The preceding proposition has interesting consequences, as we will
now see.

Recall the pairing \eqref{pab}:
$$
H^{\le 1}_{\rm{ab}}(S_{\e\rm{fl}},G\e)\times H_{\rm{ab,\e c}}^{\le
1}(S_{\rm{\acute{e}t}},G^{*})\ra \bq/\bz.
$$
When $G$ is semisimple with fundamental group $\mu$, the above
pairing is isomorphic to the canonical pairing
$$
H^{\le 2}(S_{\rm{fl}},\mu)\times H_{\rm{c}}^{\le
1}(S_{\rm{\acute{e}t}},\mu^{\be *})\ra\bq/\bz
$$
of \cite{adt}, III, Corollary 3.2, p.253, and Theorem 8.2, p.290
(see Examples 2.1(a) and 4.5(a)). The latter is a perfect pairing
between the discrete torsion group $H^{\le 2}(S_{\rm{fl}},\mu)$ and
the profinite group $H_{\rm{c}}^{\le 1}(S_{\rm{\acute{e}t}},\mu^{\be
*})$. On the other hand, by Examples 2.1(b) and 4.5(b), when $G=T$
is a torus, \eqref{pab} is isomorphic to the pairing
$$
H^{\le 1}(S_{\rm{\acute{e}t}},T)\times H_{\rm{c}}^{\le
2}(S_{\rm{\acute{e}t}},T^{*})\ra\bq/\bz
$$
of \cite{adt}, Theorem II.4.6(a), p.191, which is also perfect (both
groups are finite by \cite{adt}, Theorem II.4.6(a), p.191). Thus
\eqref{pab} is perfect in two important particular
cases\footnote{However, \eqref{pab} is not perfect in general. More
precisely, its perfectness {\it does not} follow from that of the
two particular cases just mentioned (via the standard five-lemma
argument) because the Pontryagin dual of $H_{\rm{c}}^{\le
3}(S_{\rm{\acute{e}t}},T^{*})$ is not $H^{\le
0}(S_{\rm{\acute{e}t}},T\e)$, but rather its completion relative to
the topology of sugbroups of finite index. See \cite{adt}, Theorem
II.4.6(a), p.191.}. Now, when \eqref{pab} is perfect, the dual of
\eqref{abseq} is an exact sequence
\begin{equation}\label{dabseq}
1\ra \Sha^{1}_{{\rm{ab}}, S}\e(K,G\e)^{D}\ra H_{\rm{ab,\e c}}^{\le
1}(S_{\rm{\acute{e}t}},G^{*})\ra C_{\rm{ab}}(G\e)^{D}\ra 1.
\end{equation}
On the other hand, by Theorem 4.12 and Proposition 4.16, there
exists a canonical isomorphism
$C_{\rm{ab}}(G\e)^{D}\simeq\img\delta_{S}$. Set
\begin{equation}\label{shabc}
\Sha^{1}_{\rm{ab,\e c}}\lbe(S,G^{\lbe *})=\krn\!\left[ H_{\rm{ab,\e
c}}^{\le 1}(S_{\rm{\acute{e}t}},G^{*})\overset{\delta_{_{\be
S}}}\longrightarrow\prod_{\e v\in S_{0}}\be H_{\rm{ab}}^{\le
1}(\ove,G^{*})\right].
\end{equation}
Then \eqref{dabseq} and the isomorphism
$C_{\rm{ab}}(G\e)^{D}\simeq\img\delta_{S}$ yield the following
statement.

\begin{proposition} Assume that the pairing \eqref{pab} is perfect. Then it induces a
perfect pairing
$$
\Sha^{1}_{{\rm{ab}}, S}\e(K,G\e)\times \Sha^{1}_{\rm{ab,\e
c}}\lbe(S,G^{\lbe *})\ra\bq/\bz,
$$
where the left-hand group is the discrete torsion group
\eqref{shabs} and the right-hand group is the profinite group
\eqref{shabc}.\qed
\end{proposition}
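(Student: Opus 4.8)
The plan is to read the statement off from the dual exact sequence \eqref{dabseq} together with the identification $C_{\rm{ab}}(G\e)^{D}\simeq\img\delta_{S}$, both of which are already available: the former from the discussion immediately preceding the proposition (where \eqref{dabseq} is obtained as the Pontryagin dual of \eqref{abseq} under the perfect pairing \eqref{pab}, valid because $H^{\le 1}_{\rm{ab}}(S_{\e\rm{fl}},G\e)$ is discrete torsion and $H_{\rm{ab,\e c}}^{\le 1}(S_{\rm{\acute{e}t}},G^{*})$ is profinite whenever \eqref{pab} is perfect), and the latter from Theorem 4.12 together with Proposition 4.16. The point to keep track of is that the surjection $H_{\rm{ab,\e c}}^{\le 1}(S_{\rm{\acute{e}t}},G^{*})\onto C_{\rm{ab}}(G\e)^{D}$ appearing in \eqref{dabseq} is, by construction, the transpose of the inclusion $C_{\rm{ab}}(G\e)\into H^{\le 1}_{\rm{ab}}(S_{\e\rm{fl}},G\e)$.

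The step I would then carry out is to match this transpose, via the isomorphism $C_{\rm{ab}}(G\e)^{D}\simeq\img\delta_{S}$, with the localization map $\delta_{S}$ of \eqref{deltas} regarded as a surjection onto its image. Granting this, the kernel of the surjection in \eqref{dabseq} is $\krn\delta_{S}=\Sha^{1}_{\rm{ab,\e c}}\lbe(S,G^{\lbe *})$ by the definition \eqref{shabc}; comparing with the left-hand term of \eqref{dabseq} yields a canonical isomorphism $\Sha^{1}_{\rm{ab,\e c}}\lbe(S,G^{\lbe *})\simeq\Sha^{1}_{{\rm{ab}}, S}\e(K,G\e)^{D}$, i.e. $\Sha^{1}_{\rm{ab,\e c}}\lbe(S,G^{\lbe *})$ is the exact annihilator of $C_{\rm{ab}}(G\e)$ in $H_{\rm{ab,\e c}}^{\le 1}(S_{\rm{\acute{e}t}},G^{*})$. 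The perfect pairing then follows by a formal annihilator argument: for a perfect pairing of a discrete torsion group $A$ with a profinite group $B$, a subgroup $C\subset A$ has annihilator $C^{\perp}\subset B$ with $C^{\perp}\simeq(A/C)^{D}$, and the induced pairing $(A/C)\times C^{\perp}\ra\bq/\bz$ is perfect. Taking $A=H^{\le 1}_{\rm{ab}}(S_{\e\rm{fl}},G\e)$, $C=C_{\rm{ab}}(G\e)$ and $B=H_{\rm{ab,\e c}}^{\le 1}(S_{\rm{\acute{e}t}},G^{*})$, and using $A/C\simeq\Sha^{1}_{{\rm{ab}}, S}\e(K,G\e)$ from \eqref{abseq} together with $C^{\perp}\simeq\Sha^{1}_{\rm{ab,\e c}}\lbe(S,G^{\lbe *})$, gives precisely the asserted perfect pairing, with the left factor discrete torsion and the right factor profinite.

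The main obstacle is the matching invoked above, namely that the transpose of $C_{\rm{ab}}(G\e)\into H^{\le 1}_{\rm{ab}}(S_{\e\rm{fl}},G\e)$ agrees with $\delta_{S}$ under $C_{\rm{ab}}(G\e)^{D}\simeq\img\delta_{S}$. I expect this to require unwinding the inverse-limit construction of Proposition 4.16: the isomorphism $D^{1}_{\rm{ab}}\lbe(S,G^{*})/\Sha^{1}_{\rm{ab}}\lbe(K,G^{*})\simeq\img\delta_{S}$ there is built from \eqref{cov} and the compatible systems \eqref{comp2} and \eqref{comp3}, while the duality $C_{\rm{ab}}(G\e)\simeq\big(D^{1}_{\rm{ab}}\lbe(S,G^{*})/\Sha^{1}_{\rm{ab}}\lbe(K,G^{*})\big)^{D}$ comes from Theorem 4.12. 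One has to verify that the pairing \eqref{pab} intertwines these two identifications, i.e. that the localization maps defining $\delta_{S}$ are dual to the localization maps cutting out $C_{\rm{ab}}(G\e)$ inside $H^{\le 1}_{\rm{ab}}(S_{\e\rm{fl}},G\e)$. This is a diagram chase through \eqref{cs*}, \eqref{cov} and the definitions \eqref{defs} and \eqref{shabc}, exploiting the functoriality of \eqref{pab} under the localizations $\spec K_{v}\ra\spec\ov\ra S$; once it is in place, everything else is formal.
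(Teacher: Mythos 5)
Your proposal is correct and follows essentially the same route as the paper: there, too, the statement is obtained by dualizing \eqref{abseq} under the perfect pairing \eqref{pab} to get \eqref{dabseq}, invoking the isomorphism $C_{\rm{ab}}(G\e)^{D}\simeq\img\e\delta_{S}$ from Theorem 4.12 and Proposition 4.16, and identifying the kernel of the resulting surjection with $\krn\delta_{S}=\Sha^{1}_{\rm{ab,\e c}}\lbe(S,G^{\lbe *})$. The compatibility you single out as the main obstacle --- that the transpose of $C_{\rm{ab}}(G\e)\hookrightarrow H^{\le 1}_{\rm{ab}}(S_{\e\rm{fl}},G\e)$ corresponds to $\delta_{S}$ under that isomorphism --- is exactly what the paper leaves implicit in asserting that \eqref{dabseq} and the isomorphism ``yield'' the proposition, so your account is, if anything, more explicit on that point.
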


When $G$ is semisimple with fundamental group $\mu$, Example 3.9(a)
shows that $\Sha^{1}_{{\rm{ab}},S}\e(K,G\e)$ is isomorphic
to
\begin{equation}\label{sha2} \Sha^{2}_{S}(K,\mu)=\krn\!\!\left[\e
H^{2}(K_{\rm{fl}},\mu)\ra\displaystyle\prod_{v\in S_{\e 0}}H^{\le
2}(K_{v,{\rm{fl}}},\mu)\right].
\end{equation}
On the other hand, since the map
$$
H_{\rm{ab}}^{\le 1}(\ove,G^{*})=H^{\le 1}(\ove,\mu^{*})\ra H^{\le
1}(K_{v},\mu^{\lbe *})
$$
is injective for every $v\in S_{0}$ by \cite{adt}, III, Lemma
1.1(a), p.237, and p.280, we conclude that $\Sha^{1}_{\rm{ab,\e
c}}\lbe(S,G^{*})$ is isomorphic to
\begin{equation}\label{shacmu}
\Sha^{1}_{\rm{c}}\lbe(S,\mu^{\lbe *})=\krn\!\!\left[\e
H_{\rm{c}}^{\le 1}(S_{\rm{\acute{e}t}},\mu^{\lbe
*})\ra\displaystyle\prod_{v\in S_{0}}H^{\le 1}(K_{v},\mu^{\lbe
*})\right].
\end{equation}
Thus the following is an immediate consequence of the proposition.

\begin{corollary} Let $G$ be a semisimple $S$-group scheme with
fundamental group $\mu$. Then the natural pairing $\mu\times
\mu^{\lbe *}\ra\bg_{m,S}$ induces a perfect pairing
$$
\Sha^{2}_{S}(K,\mu)\times\Sha^{1}_{\rm{c}}\lbe(S,\mu^{\lbe
*})\ra\bq/\bz,
$$
where the left-hand group is the discrete torsion group \eqref{sha2}
and the right-hand group is the profinite group \eqref{shacmu}.\qed
\end{corollary}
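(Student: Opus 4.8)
The plan is to obtain the corollary directly from the preceding proposition, after verifying its hypothesis in the semisimple case and then transporting its conclusion through identifications already established. First I would check that the pairing \eqref{pab} is perfect when $G$ is semisimple: as recorded in the discussion preceding that proposition, Examples 2.1(a) and 4.5(a) identify \eqref{pab} with the canonical pairing $H^{\le 2}(S_{\rm{fl}},\mu)\times H_{\rm{c}}^{\le 1}(S_{\rm{\acute{e}t}},\mu^{\be *})\ra\bq/\bz$ of \cite{adt}, III, Corollary 3.2, and Theorem 8.2, which is perfect. Thus the hypothesis of the proposition is satisfied and we obtain a perfect pairing $\Sha^{1}_{{\rm{ab}}, S}\e(K,G\e)\times\Sha^{1}_{\rm{ab,\e c}}\lbe(S,G^{\lbe *})\ra\bq/\bz$.

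Next I would identify the two groups in this pairing with those in the statement. On the left, Example 3.9(a) supplies a canonical isomorphism $\Sha^{1}_{{\rm{ab}}, S}\e(K,G\e)\simeq\Sha^{2}_{S}(K,\mu)$. On the right, Example 4.5(a) gives $H_{\rm{ab,\e c}}^{\le 1}(S_{\rm{\acute{e}t}},G^{*})=H_{\rm{c}}^{\le 1}(S_{\rm{\acute{e}t}},\mu^{\lbe *})$, while for each $v\in S_{0}$ the localization map $H_{\rm{ab}}^{\le 1}(\ove,G^{*})=H^{\le 1}(\ove,\mu^{\lbe *})\ra H^{\le 1}(K_{v},\mu^{\lbe *})$ is injective by \cite{adt}, III, Lemma 1.1(a). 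This injectivity shows that the kernel \eqref{shabc} defining $\Sha^{1}_{\rm{ab,\e c}}\lbe(S,G^{\lbe *})$ coincides with the kernel \eqref{shacmu} defining $\Sha^{1}_{\rm{c}}\lbe(S,\mu^{\lbe *})$, whence $\Sha^{1}_{\rm{ab,\e c}}\lbe(S,G^{\lbe *})\simeq\Sha^{1}_{\rm{c}}\lbe(S,\mu^{\lbe *})$.

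Finally I would confirm that, under these identifications, the pairing produced by the proposition is the one induced by the Cartier duality pairing $\mu\times\mu^{\lbe *}\ra\bg_{m,S}$. This is automatic, since \eqref{pab} has already been matched with the canonical pairing induced by $\mu\times\mu^{\lbe *}\ra\bg_{m,S}$ on the ambient groups $H^{\le 2}(S_{\rm{fl}},\mu)$ and $H_{\rm{c}}^{\le 1}(S_{\rm{\acute{e}t}},\mu^{\lbe *})$, and both $\Sha^{2}_{S}(K,\mu)$ and $\Sha^{1}_{\rm{c}}\lbe(S,\mu^{\lbe *})$ are cut out of these by exactly the local conditions defining $\Sha^{1}_{{\rm{ab}}, S}\e(K,G\e)$ and $\Sha^{1}_{\rm{ab,\e c}}\lbe(S,G^{\lbe *})$. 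I do not anticipate a genuine obstacle: the whole argument is a transfer of the preceding proposition through Examples 2.1(a), 3.9(a) and 4.5(a), so the only point requiring care is that these identifications are canonical and compatible with the pairings, which holds because the relevant $\Sha$-groups are defined by matching local vanishing conditions.
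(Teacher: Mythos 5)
Your proposal is correct and takes essentially the same route as the paper: the paper also verifies the hypothesis of Proposition 4.17 by identifying \eqref{pab} with the perfect pairing $H^{2}(S_{\rm{fl}},\mu)\times H^{1}_{\rm{c}}(S_{\rm{\acute{e}t}},\mu^{*})\ra\bq/\bz$ of \cite{adt} via Examples 2.1(a) and 4.5(a), identifies $\Sha^{1}_{{\rm{ab}},S}(K,G\e)$ with $\Sha^{2}_{S}(K,\mu)$ by Example 3.9(a), and uses the injectivity of $H^{1}(\ov,\mu^{*})\ra H^{1}(K_{v},\mu^{*})$ from \cite{adt}, III, Lemma 1.1(a), to identify \eqref{shabc} with \eqref{shacmu} before invoking the proposition. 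No gaps.
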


When $G=T$ is an $S$-torus, we have $\Sha^{1}_{{\rm{ab}},
S}\e(K,G\e)=\Sha^{1}_{S}\e(K,T\e)$ by Example 3.9(b). Note that,
since both $\Sha^{1}(K,T\e)$ and
$\prod_{v\le\in\le\Sigma}H^{1}(K_{v},T)$ are finite,
$\Sha^{1}_{S}\e(K,T\e)$ is finite as well. On the other hand, by
\cite{dem1}, Lemma 3.2, p.11\footnote{The proof given in [op.cit.]
is also valid in the function field case.}, for every $v\in S_{0}$
the canonical map $H_{\rm{ab}}^{\le 1}(\ove,G^{*})=H^{\le
2}(\ove,T^{*})\ra H^{\le 2}(K_{v},T^{*})$ is injective. Thus, by
Examples 2.1(b) and 4.5(b), we conclude that $\Sha^{1}_{\rm{ab,\e
c}}\lbe(S,T^{*})$ equals
$$
\Sha^{2}_{\rm{c}}\lbe(S,T^{*})=\krn\!\!\left[\e H_{\rm{c}}^{\le
2}(S_{\rm{\acute{e}t}},T^{*})\ra\displaystyle\prod_{v\in
S_{0}}H^{\le 2}(K_{v},T^{*})\right].
$$
We have already noted that $H_{\rm{c}}^{\le
2}(S_{\rm{\acute{e}t}},T^{*})$ is finite, so
$\Sha^{2}_{\rm{c}}\lbe(S,T^{*})$ is finite as well. Thus the
following statement is also an immediate consequence of Proposition
4.17.

\begin{corollary} The natural pairing $T\times
T^{*}\ra\bg_{m,S}$ induces a perfect pairing of finite groups
$$
\Sha^{1}_{S}\e(K,T\e)\times\Sha^{2}_{\rm{c}}\lbe(S,T^{*})\ra\bq/\bz.
\qed
$$
\end{corollary}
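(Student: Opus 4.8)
The plan is to obtain this corollary as the torus specialization of Proposition 4.17, so the first task is to verify that its hypothesis---perfectness of the pairing \eqref{pab}---holds when $G=T$. By Examples 2.1(b) and 4.5(b), for $G=T$ the pairing \eqref{pab} is isomorphic to the pairing
$$
H^{\le 1}(S_{\rm{\acute{e}t}},T)\times H_{\rm{c}}^{\le 2}(S_{\rm{\acute{e}t}},T^{*})\ra\bq/\bz
$$
of \cite{adt}, Theorem II.4.6(a), and the latter is a perfect pairing of finite groups. Thus Proposition 4.17 applies and yields a perfect pairing $\Sha^{1}_{{\rm{ab}},S}\e(K,T\e)\times\Sha^{1}_{\rm{ab,\e c}}\lbe(S,T^{*})\ra\bq/\bz$, induced---through the defining flasque resolution and the cup product $T\times T^{*}\ra\bg_{m,S}$---by the natural pairing asserted in the statement.

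The second step is to identify the two factors in this pairing with the target groups $\Sha^{1}_{S}\e(K,T\e)$ and $\Sha^{2}_{\rm{c}}\lbe(S,T^{*})$. For the left factor this is immediate from Example 3.9(b), which gives $\Sha^{1}_{{\rm{ab}},S}\e(K,T\e)=\Sha^{1}_{S}\e(K,T\e)$. For the right factor I would use Examples 2.1(b) and 4.5(b) to rewrite the global and local terms defining $\Sha^{1}_{\rm{ab,\e c}}\lbe(S,T^{*})$ in terms of $H_{\rm{c}}^{\le 2}(S_{\rm{\acute{e}t}},T^{*})$ and the $H^{\le 2}(K_{v},T^{*})$, together with the injectivity of $H_{\rm{ab}}^{\le 1}(\ove,T^{*})=H^{\le 2}(\ove,T^{*})\ra H^{\le 2}(K_{v},T^{*})$ for each $v\in S_{0}$; this last injectivity holds by \cite{dem1}, Lemma 3.2, whose proof is valid in the function field case as well. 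These identifications give $\Sha^{1}_{\rm{ab,\e c}}\lbe(S,T^{*})=\Sha^{2}_{\rm{c}}\lbe(S,T^{*})$.

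Finally I would record the finiteness of both groups. The group $\Sha^{1}_{S}\e(K,T\e)$ is finite because $\Sha^{1}(K,T\e)$ and the finitely many local groups $H^{\le 1}(K_{v},T\e)$ (for $v\le\in\le\Sigma$) are finite, while $\Sha^{2}_{\rm{c}}\lbe(S,T^{*})$ is finite as a subgroup of the finite group $H_{\rm{c}}^{\le 2}(S_{\rm{\acute{e}t}},T^{*})$.

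Since the entire argument is a direct transcription of facts already established in the preceding discussion, there is no genuine obstacle here; the only point requiring minor care is confirming that the injectivity input from \cite{dem1} carries over to the function field case, which is why I single it out explicitly above.
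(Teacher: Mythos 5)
Your proposal is correct and follows essentially the same route as the paper: verify perfectness of the pairing \eqref{pab} for $G=T$ via Examples 2.1(b), 4.5(b) and \cite{adt}, Theorem II.4.6(a), apply Proposition 4.17, and then identify $\Sha^{1}_{{\rm{ab}},S}(K,T)$ with $\Sha^{1}_{S}(K,T)$ (Example 3.9(b)) and $\Sha^{1}_{\rm{ab,\e c}}(S,T^{*})$ with $\Sha^{2}_{\rm{c}}(S,T^{*})$ using the injectivity of $H^{\le 2}(\ove,T^{*})\ra H^{\le 2}(K_{v},T^{*})$ from \cite{dem1}, Lemma 3.2. Even your remark that this last input carries over to the function field case matches the paper's own footnote, and your finiteness arguments coincide with those given in the text preceding the corollary.
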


\begin{remark} The dualities of the above two corollaries should not be
confused with those contained in \cite{adt}, Theorem I.4.20(a),
p.65. For example, the groups denoted $\Sha^{i}_{S}\e(K,M\e)$ in
[op.cit.], p.56, are {\it not} the same as the groups so denoted
here when $M=\mu$ or $\mu^{\lbe *}$.
\end{remark}

\section{Brauer groups and class groups}

Let $K$ and $S$ be as in the Introduction and let $G$ be a reductive
group scheme over $S$. In this Section we use results from
\cite{bvh} to relate the dual of $C_{\rm{ab}}(G)$ to the algebraic
Brauer group of $G_{\be K}=G\times_{S}\spec K$.

If $X$ is any smooth and geometrically integral $K$-variety, we will
write $\xb$ for $X\times_{\spec K}\spec \kb$.

There exists a canonical complex of abelian groups $\br K\ra\br\e
G_{\be K}\ra\br\,\gb_{\be K}$ and we define
$$
\bra G_{\be K}=\krn\!\be\left[\e\br\e G_{\be K}\ra\br\,\gb_{\be
K}\,\right]/\e\img[\e\br K\ra\br\e G_{\be K}\e].
$$
Further, set
\begin{equation}\label{bg}
\Bha(G_{\be K}\e)=\krn\!\be\left[\,\bra G_{\be
K}\ra\displaystyle\prod_{\text{all $v$}}\bra G_{\be K_{v}}\right].
\end{equation}

For any smooth and geometrically integral $K$-variety $X$, let
${\rm{UPic}}\big(\e\xb\e\big)$ be the complex of $\g$-modules
defined in \cite{bvh}, \S2.1. We note that many of the proofs in
\cite{bvh} are in fact independent of the characteristic of $K$,
provided attention is restricted to reductive groups in [op.cit.],
\S\S 3 and 5, and certain references to the literature in [loc.cit.]
are replaced by references to \cite{san}. Those which are not are
either irrelevant to the matters discussed in this Section, or else
there exist published alternative characteristic-free proofs of
these results (see \cite{bvh}, Remarks 2.14 and 4.15).

There exist a canonical divisor map
$\kb\e[X]^{*}/\e\kc\ra\iv\big(\e\xb\e\big)$ and a canonical
quasi-isomorphism
\begin{equation}\label{upic}
{\rm{UPic}}\big(\e\xb\e\big)[1]\simeq\big(\kb\e[X]^{*}/\e\kc\ra\iv\big(\e\xb\e\big)
\big),
\end{equation}
where $\kb[X]^{*}/\kc$ is placed in degree $-1$ and
$\iv\big(\e\xb\e\big)$ in degree 0 (see \cite{bvh}, Corollary 2.5
and Remark 2.6). Now let $1\ra F\ra H\ra G\ra 1$ be a flasque
resolution of $G$. Then its generic fiber $1\ra F_{\lbe K}\ra
H_{\lbe K}\ra G_{\be K}\ra 1$ is a flasque resolution of $G_{\be K}$
(see \cite{cts2}, Proposition 1.4, p.158), and the following holds.

\begin{proposition} Assume that $G_{\be K}$ admits a smooth
$K$-compactification. Then the flasque resolution $1\ra F_{\lbe
K}\ra H_{\lbe K}\ra G_{\be K} \ra 1$ defines a quasi-isomorphism of
complexes of $\g$-modules
$$
{\rm{UPic}}\big(\e\gb_{\be K}\,\big)[1]\simeq\left(R_{\lbe K}^{*}\ra
F_{\lbe K}^{*}\e\right),
$$
where $R_{\lbe K}=H_{\be K}^{\tor}$.
\end{proposition}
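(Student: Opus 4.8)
The plan is to represent both sides by explicit two-term complexes and then to build a comparison quasi-isomorphism out of the resolution. By \eqref{upic}, ${\rm{UPic}}\big(\gb_{\be K}\big)[1]$ is represented by the complex $\big(\kb[G_{\be K}]^{*}/\kc\ra\iv(\gb_{\be K})\big)$, with $\kb[G_{\be K}]^{*}/\kc$ in degree $-1$. The target $\big(R_{\lbe K}^{*}\ra F_{\lbe K}^{*}\big)$ is the dual complex $C^{*}$ of $C=(F_{\lbe K}\ra R_{\lbe K})$ (with $F_{\lbe K}$ in degree $-1$), where the arrow $F_{\lbe K}\ra R_{\lbe K}$ is the composite $F_{\lbe K}\into H_{\lbe K}\onto R_{\lbe K}=H_{\be K}^{\tor}$, so that $R_{\lbe K}^{*}$ sits in degree $-1$. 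First I would record the exact sequence attached to the resolution. Since $H^{\der}_{\lbe K}=\Gtil_{\be K}$ is simply connected we have $F_{\lbe K}\cap H^{\der}_{\lbe K}=\krn\big(\Gtil_{\be K}\ra G_{\be K}\big)=\mu_{\be K}$ and $R_{\lbe K}/\img F_{\lbe K}=G^{\tor}_{\be K}$, whence the exact sequence of $K$-groups of multiplicative type
$$
1\ra\mu_{\be K}\ra F_{\lbe K}\ra R_{\lbe K}\ra G^{\tor}_{\be K}\ra 1 .
$$
Applying Cartier duality (an exact anti-equivalence on groups of multiplicative type) yields
$$
1\ra\big(G^{\tor}_{\be K}\big)^{*}\ra R_{\lbe K}^{*}\ra F_{\lbe K}^{*}\ra\mu_{\be K}^{*}\ra 1,
$$
so that $C^{*}$ has cohomology $\big(G^{\tor}_{\be K}\big)^{*}$ in degree $-1$ and $\mu_{\be K}^{*}$ in degree $0$.

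I would then match this with the cohomology of ${\rm{UPic}}\big(\gb_{\be K}\big)[1]$. For a connected reductive group one has $\kb[G_{\be K}]^{*}/\kc=\big(G^{\tor}_{\be K}\big)^{*}$ and $\pic\big(\gb_{\be K}\big)=\mu_{\be K}^{*}$, the latter because the Picard group of a reductive group is the Cartier dual of its fundamental group (see \cite{san}); thus the two complexes have canonically isomorphic cohomology in each degree. To upgrade this to an isomorphism in the derived category I would invoke the functoriality of ${\rm{UPic}}$ from \cite{bvh}. The key input is that $H_{\lbe K}$ is quasi-trivial: $\pic\big(\overline{H}_{\lbe K}\big)=0$ because $H^{\der}_{\lbe K}=\Gtil_{\be K}$ is simply connected, and $\kb[H_{\lbe K}]^{*}/\kc=R_{\lbe K}^{*}$ because $R_{\lbe K}=H^{\tor}_{\be K}$, so that ${\rm{UPic}}\big(\overline{H}_{\lbe K}\big)[1]\simeq R_{\lbe K}^{*}$ is concentrated in degree $-1$. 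Viewing the resolution map $p\colon H_{\lbe K}\ra G_{\be K}$ as a torsor under the central subtorus $F_{\lbe K}$, the pullback $p^{*}$ fits into a distinguished triangle whose third term is $F_{\lbe K}^{*}$, reading after the shift
$$
F_{\lbe K}^{*}\ra{\rm{UPic}}\big(\gb_{\be K}\big)[1]\ra{\rm{UPic}}\big(\overline{H}_{\lbe K}\big)[1]\overset{+1}{\lra},
$$
with $F_{\lbe K}^{*}$ placed in degree $0$. Identifying the outer terms with $F_{\lbe K}^{*}$ and $R_{\lbe K}^{*}$, this triangle exhibits ${\rm{UPic}}\big(\gb_{\be K}\big)[1]$ as the complex $\big(R_{\lbe K}^{*}\ra F_{\lbe K}^{*}\big)=C^{*}$.

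The main obstacle is the last identification: one must produce the comparison as a genuine morphism of complexes, not merely match cohomology, and verify that the connecting map $R_{\lbe K}^{*}\ra F_{\lbe K}^{*}$ of the torsor triangle is exactly the Cartier dual of the structural map $F_{\lbe K}\ra R_{\lbe K}$. This is where the smooth-compactification hypothesis enters in an essential way: it guarantees that ${\rm{UPic}}$ is represented by \eqref{upic} with a well-behaved divisor term and that the torsor functoriality of \cite{bvh} is available, so that the connecting morphism can be computed through boundary divisors. Finally, since parts of \cite{bvh} are written in characteristic zero, I would check---restricting to reductive groups and replacing the relevant references by \cite{san}, as explained in the paragraph preceding the proposition---that every step used here is characteristic-free, thereby covering the function-field case. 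Once the connecting map is identified, the morphism induces isomorphisms on $H^{-1}$ and $H^{0}$ and is therefore the required quasi-isomorphism.
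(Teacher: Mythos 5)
Your preliminary reductions are all correct: the exact sequence $1\ra\mu_K\ra F_K\ra R_K\ra G^{\tor}_K\ra 1$, its Cartier dual, the identifications $\kb\e[G]^*/\kc\simeq\big(G^{\tor}_K\big)^{*}$ and $\pic\big(\gb_K\big)\simeq\mu_K^{*}$, and the observation that matching cohomology alone proves nothing. But the proof then stops precisely at the point where a proof is required. The distinguished triangle $F_K^{*}\ra{\rm UPic}\big(\gb_K\big)[1]\ra{\rm UPic}\big(\overline{H}_K\big)[1]\ra F_K^{*}[1]$ is invoked as ``torsor functoriality of \cite{bvh}'' with no precise statement or reference; the results of \cite{bvh} actually used in this paper (Corollary 2.5, Remark 2.6, Corollary 2.20) do not include a triangle for torsors under a torus over a base variety, so this step --- essentially an upgrade of Sansuc's exact sequence (\cite{san}, Proposition 6.10) to the derived level --- would itself have to be proved. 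Worse, the step you yourself single out as ``the main obstacle,'' namely that the connecting map $R_K^{*}\ra F_K^{*}$ is the Cartier dual of the structural map $F_K\ra R_K$ (without which you only get a complex with the right cohomology, not the complex of the statement), is never carried out: it is replaced by the assertion that the smooth-compactification hypothesis makes it computable ``through boundary divisors.'' That assertion misdiagnoses your own argument: nothing in the triangle route involves a compactification (the representative \eqref{upic} holds for any smooth geometrically integral variety, and a torsor triangle, if available, would too), so the hypothesis cannot be what rescues this step. Had you granted the triangle, the identification could in fact be done directly: under ${\rm UPic}\big(\overline{H}_K\big)[1]\simeq R_K^{*}[1]$, the map to $F_K^{*}[1]$ sends a character of $H_K$ (equivalently of $R_K$), viewed as a unit on $\overline{H}_K$, to its weight under translation by the central subtorus $F_K$, i.e.\ to its restriction along $F_K\ra H_K\ra R_K$, which is exactly the dual structural map --- but this computation is absent from your write-up, so the central identification remains a gap.

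For comparison, the paper's proof is a chain of two citations and is where the compactification genuinely enters: after \eqref{upic}, it invokes \cite{ct}, Proposition B.2(iii) and Remark B.2.1(2), which use a smooth compactification $X$ of $G_{\be K}$ to produce the quasi-isomorphisms
$$
\big(\kb\e[G]^{*}/\e\kc\ra\iv\big(\e\gb_{\be K}\e\big)\big)\simeq\big(\iv_{\e\xb\,\setminus\,\gb_{\lbe K}}\big(\e\xb\e\big)\ra\pic\big(\e\xb\e\big)\big)\simeq\big(R_{\lbe K}^{*}\ra F_{\lbe K}^{*}\big),
$$
with the identification of the differential as the dual structural map built into Colliot-Th\'el\`ene's result. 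Your route, if completed along the lines indicated above, would yield a compactification-free argument (interesting in its own right), but as written it has a genuine hole at its crux, and the role it assigns to the compactification hypothesis is incorrect.
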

\begin{proof} By \eqref{upic}, ${\rm{UPic}}\big(\e\gb_{\be K}\,\big)[1]$ is
quasi-isomorphic to $\big(\kb\e[G]^{*}/\kc\ra\iv\big(\e\gb_{\be
K}\e\big)\big)$. Now let $X$ be a smooth $K$-compactification of
$G_{\be K}$. Then, by \cite{ct}, Proposition B.2(iii) and Remark
B.2.1(2), pp.130-131, there exist quasi-isomorphisms
$$
\big(\kb\e[G]^{*}/\e\kc\ra\iv\big(\e\gb_{\be
K}\e\big)\big)\simeq\big( \iv_{\e\xb\,\setminus\,\gb_{\lbe
K}}\big(\e\xb\e\big)\ra\pic\big(\e\xb\e\big)\big) \simeq(R_{\lbe
K}^{*}\ra F_{\lbe K}^{*}\e).
$$
This completes the proof.
\end{proof}

\begin{corollary} Under the hypotheses of the above proposition, there exists an
isomorphism $\bra G_{\be K}\simeq H^{\le 1}_{\rm{ab}}(K, G^{*})$.
\end{corollary}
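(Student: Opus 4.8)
The plan is to read off the corollary by concatenating the preceding proposition with two external inputs: the Borovoi--van Hamel computation of the algebraic Brauer group in terms of the extended Picard complex, and the flasque-resolution description of dual abelian cohomology from \cite{ga4}. Write $C^{*}=\big(R_{\lbe K}^{*}\ra F_{\lbe K}^{*}\big)$ for the complex attached to the generic fibre $1\ra F_{\lbe K}\ra H_{\lbe K}\ra G_{\be K}\ra 1$, with $R_{\lbe K}^{*}$ in degree $-1$ and $F_{\lbe K}^{*}$ in degree $0$. I would then assemble the chain of canonical isomorphisms
$$
\bra G_{\be K}\;\simeq\;\bh^{\e 1}\big(K,{\rm{UPic}}\big(\gb_{\be K}\big)[1]\big)\;\simeq\;\bh^{\e 1}\lbe(K,C^{*})\;\simeq\;H^{\e 1}_{\rm{ab}}(K,G^{*})
$$
and conclude.

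The middle isomorphism is immediate from the preceding proposition, which furnishes a quasi-isomorphism ${\rm{UPic}}\big(\gb_{\be K}\big)[1]\simeq C^{*}$ of complexes of $\g$-modules and hence isomorphisms on Galois hypercohomology in every degree. The right-hand isomorphism is \cite{ga4}, Proposition 4.2, applied over the base $\spec K$ (exactly as already invoked for $\Sha^{1}_{\rm{ab}}(K,G^{*})\simeq\Sha^{1}(K,C^{*})$ in the proof of Proposition 4.9): the given resolution identifies $H^{\e i}_{\rm{ab}}(K,G^{*})$ with $\bh^{\e i}(K,C^{*})$, and I only need $i=1$. The left-hand isomorphism is the theorem of Borovoi and van Hamel \cite{bvh}, which for a smooth, geometrically integral $K$-variety $X$ provides a canonical isomorphism $\bra X\simeq\bh^{\e 1}\big(K,{\rm{UPic}}\big(\xb\big)[1]\big)$ (equivalently $\bh^{\e 2}\big(K,{\rm{UPic}}\big(\xb\big)\big)$, the form that specialises to Sansuc's $H^{1}(K,\pic\,\xb)$ when $X$ has no nonconstant units); I would apply this to $X=G_{\be K}$. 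The one point demanding care is the bookkeeping of degrees: the shift $[1]$ in the preceding proposition is exactly what reconciles the $\bh^{\e 2}$ of the Brauer formula with the $\bh^{\e 1}$ computing $H^{\e 1}_{\rm{ab}}(K,G^{*})$. I would sanity-check the count on $X=\bg_{m}$, where ${\rm{UPic}}$ is $\bz$ in degree $0$ and all three groups reduce to $H^{\e 2}(K,\bz)$.

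The genuine obstacle is not the formal concatenation but the validity of the Borovoi--van Hamel isomorphism in the function field case, i.e. when $\text{char}\,K=p>0$. Here I would rely on the paragraph preceding the proposition: restricted to reductive groups, their computation of ${\rm{UPic}}$ and of $\bra$ is characteristic-free once the characteristic-zero inputs of [op.cit.] are replaced by the corresponding results of \cite{san}. The smooth-compactification hypothesis is precisely what makes these inputs available---it is used in the proposition to express ${\rm{UPic}}\big(\gb_{\be K}\big)$ through the boundary divisors and the Picard group of a compactification---so the hypothesis that $G_{\be K}$ admits a smooth $K$-compactification is inherited from the proposition and needs nothing further. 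Finally, since each of the three isomorphisms is canonical, and the identification $H^{\e 1}_{\rm{ab}}(K,G^{*})\simeq\bh^{\e 1}(K,C^{*})$ together with the proposition is independent of the chosen flasque resolution (by the comparison of resolutions in \cite{ga4}, Proposition 3.7(iii), used as in Remark 3.15), the resulting isomorphism $\bra G_{\be K}\simeq H^{\e 1}_{\rm{ab}}(K,G^{*})$ is itself canonical and resolution-independent.
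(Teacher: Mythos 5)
Your proposal is correct and follows essentially the same route as the paper's proof: the same three-step chain $\bra G_{\be K}\simeq\bh^{\e 1}\be\big(K,{\rm{UPic}}\big(\e\gb_{\lbe K}\e\big)[1]\big)\simeq\bh^{\e 1}(K,R_{\lbe K}^{*}\ra F_{\lbe K}^{*})\simeq H^{\e 1}_{\rm{ab}}(K,G^{*})$, using \cite{bvh}, Corollary 2.20(ii), then Proposition 5.1, then \cite{ga4}, Proposition 4.2. Your added remarks on the degree bookkeeping, the characteristic-$p$ caveats, and resolution-independence are consistent with the paper's surrounding discussion but are not needed beyond what the paper itself records.
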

\begin{proof} There exists an isomorphism $\bra G_{\be K}\simeq \bh^{\e 1}\be\big(K,
{\rm{UPic}}\big(\e\gb_{\lbe K}\e\big)[1]\big)$ by \cite{bvh},
Corollary 2.20(ii). On the other hand, the proposition and
\cite{ga4}, Proposition 4.2, yield isomorphisms
$$
\bh^{\e 1}\be\big(K, {\rm{UPic}}\big(\e\gb_{\lbe
K}\e\big)[1]\big)\simeq \bh^{\e 1}(K,R^{*}\ra F^{*}\e)\simeq H^{\e
1}_{\rm{ab}}(K, G^{*}),
$$
whence the result follows.
\end{proof}

Similarly, for each prime $v$ of $K$, there exist isomorphisms $\bra
G_{\lbe K_{v}}\simeq H^{\le 1}_{\rm{ab}}(K_{v}, G^{*})$ which are
compatible with the isomorphism of the preceding corollary. Thus the
chosen flasque resolution of $G$ induces an isomorphism
\begin{equation}\label{bsha}
\Bha(G_{\be K}\e)\simeq \Sha^{1}_{\rm{ab}}\lbe(K,G^{*}\e).
\end{equation}
Therefore the following statement is immediate from Proposition 4.9.

\begin{proposition} Assume that $G_{\be K}$ admits a smooth
$K$-compactification. Then there exists a perfect pairing of finite
groups
$$
\Sha^{1}_{\rm{ab}}(K,G\e)\times\Bha(G_{\be K}\e)\ra\bq/\e\bz,
$$
where $\Sha^{1}_{\rm{ab}}(K,G\e)$ and $\Bha(G_{\be K}\e)$ are the
groups \eqref{absha} and \eqref{bg}, respectively. \qed
\end{proposition}

\begin{remark} We stress the fact that the definition of the above pairing depends
on the choice of a flasque resolution of $G$ (note, however, that a
different choice of flasque resolution leads to an isomorphic
pairing). Now, the proposition and \cite{ga3}, Corollary 5.10,
establish the existence of a pairing
$\Sha^{1}\be(K,G\e)\times\Bha(G_{\be K}\e)\ra\bq/\e\bz$ which
induces a bijection $\Sha^{1}\be(K,G\e)\simeq\Bha(G_{\be K}\e)^{D}$
over any global field $K$. The existence of such a bijection in the
number field case was established in \cite{san}, Theorem 8.5, where
the underlying pairing is the Brauer-Manin pairing. This raises the
question of clarifying the relationship of the pairing of the
proposition with the Brauer-Manin pairing, which we hope to address
in a future publication.
\end{remark}

Now recall the subgroup $H^{\le 1}_{\rm{ab,nr}}(K, G^{*})$ of
$H^{\le 1}_{\rm{ab}}(K, G^{*})$ given by \eqref{abnr} and let
$\branr(G_{\be K}\e)$ denote the subgroup of $\bra G_{\be K}$ which
corresponds to $H^{\le 1}_{\rm{ab,nr}}(K, G^{*})$ under the
isomorphism of Corollary 5.2. Set
\begin{equation}\label{branrs}
\branrs(G_{\be K}\e)=\branr(G_{\be K}\e)\e\cap\krn\!\!\left[\,\bra
G_{\be K}\ra\displaystyle\prod_{v\le\in\le\Sigma}\bra G_{\lbe
K_{v}}\right]\subset\bra G_{\be K}.
\end{equation}
Then, by Corollary 5.2 and Proposition 4.11, there exist
isomorphisms
$$
\begin{array}{rcl}
\branrs(G_{\be K}\e)&\simeq& H^{\le 1}_{\rm{ab,nr}}(K,
G^{*})\cap\krn\!\!\left[H^{\le 1}_{\rm{ab}}(K,
G^{*})\ra\displaystyle\prod_{v\le\in\le\Sigma}H^{\le
1}_{\rm{ab}}(K_{v},
G^{*})\right]\\
&\simeq& D^{1}_{\rm{ab}}\lbe(S,G^{*}).
\end{array}
$$
Thus, using \eqref{bsha}, the following statement is an immediate
consequence of Theorem 4.12.

\begin{theorem} Assume that $G_{\be K}$ admits a smooth
$K$-compactification. Then there exists a perfect pairing of finite
groups
$$
C_{\rm{ab}}(G\e)\times \branrs(G_{\be K}\e)/\Bha(G_{\be
K}\e)\ra\bq/\bz,
$$
where the groups $\branrs(G_{\be K}\e)$ and $\Bha(G_{\be K}\e)$ are
given by \eqref{branrs} and \eqref{bg}, respectively.\qed
\end{theorem}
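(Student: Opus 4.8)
The plan is to read the theorem off from the duality of Theorem 4.12 by transporting both terms of the quotient on the right across the identifications assembled in the discussion preceding the statement. First I would recall that Theorem 4.12 provides a perfect pairing of finite groups
$$
C_{\rm{ab}}(G\e)\times D^{1}_{\rm{ab}}\lbe(S,G^{*})/\Sha^{1}_{\rm{ab}}\lbe(K,G^{*})\ra\bq/\bz,
$$
so that it suffices to produce an isomorphism $\branrs(G_{\be K}\e)/\Bha(G_{\be K}\e)\simeq D^{1}_{\rm{ab}}\lbe(S,G^{*})/\Sha^{1}_{\rm{ab}}\lbe(K,G^{*})$ and substitute it into this pairing.

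Next I would fix a flasque resolution $1\ra F\ra H\ra G\ra 1$, whose generic fiber is a flasque resolution of $G_{\be K}$, and invoke the two identifications it determines. On the one hand, Corollary 5.2 composed with Proposition 4.11 gives $\branrs(G_{\be K}\e)\simeq D^{1}_{\rm{ab}}\lbe(S,G^{*})$; on the other hand, the compatibility of the local isomorphisms $\bra G_{\lbe K_{v}}\simeq H^{\le 1}_{\rm{ab}}(K_{v},G^{*})$ with the global isomorphism of Corollary 5.2 yields \eqref{bsha}, namely $\Bha(G_{\be K}\e)\simeq\Sha^{1}_{\rm{ab}}\lbe(K,G^{*})$. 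Both target groups are finite, so finiteness of $\branrs(G_{\be K}\e)/\Bha(G_{\be K}\e)$ is automatic.

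The only point requiring attention, and the one I expect to be the main (though essentially formal) obstacle, is to verify that these two isomorphisms are compatible with the natural inclusions, so that $\Bha(G_{\be K}\e)\hookrightarrow\branrs(G_{\be K}\e)$ corresponds to the inclusion $\Sha^{1}_{\rm{ab}}\lbe(K,G^{*})\hookrightarrow D^{1}_{\rm{ab}}\lbe(S,G^{*})$ furnished by the exact sequence recorded just before Theorem 4.12. This compatibility holds because both inclusions are induced by restriction to the generic fiber, transported through the \emph{single} isomorphism $\bra G_{\be K}\simeq H^{\le 1}_{\rm{ab}}(K,G^{*})$ of Corollary 5.2 together with its local counterparts. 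Once this is checked, passing to quotients yields the desired isomorphism $\branrs(G_{\be K}\e)/\Bha(G_{\be K}\e)\simeq D^{1}_{\rm{ab}}\lbe(S,G^{*})/\Sha^{1}_{\rm{ab}}\lbe(K,G^{*})$, and the perfect pairing of Theorem 4.12 descends to the asserted perfect pairing, completing the proof.
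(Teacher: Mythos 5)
Your proposal is correct and follows essentially the same route as the paper: the paper likewise obtains $\branrs(G_{\be K}\e)\simeq D^{1}_{\rm{ab}}\lbe(S,G^{*})$ from Corollary 5.2 together with Proposition 4.11, uses \eqref{bsha} for $\Bha(G_{\be K}\e)\simeq \Sha^{1}_{\rm{ab}}\lbe(K,G^{*})$, and then reads the statement off from Theorem 4.12. The compatibility of the two identifications with the natural inclusions, which you flag as the only point needing care, is indeed the (implicit) content of the paper's ``immediate consequence'' step, and it holds for exactly the reason you give: both are induced by the single isomorphism of Corollary 5.2 and its local counterparts coming from one fixed flasque resolution.
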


\end{document}